
\documentclass[11pt]{amsart}

\usepackage{amssymb,amsmath,amscd,graphicx,
latexsym,amsthm}
\usepackage{amssymb,latexsym,amsmath,amscd,graphicx}
\usepackage[mathscr]{eucal}
  \usepackage[all]{xy}
\setlength{\parindent}{.4 in}
\setlength{\textwidth}{6.5 in}   
\setlength{\topmargin} {-.3 in}
\setlength{\evensidemargin}{0 in}
\setlength{\oddsidemargin}{0 in}
\setlength{\footskip}{.3 in}
\setlength{\headheight}{.3 in}
\setlength{\textheight}{8.7 in}
\setlength{\parskip}{.1 in}

        \def\Q{{\mathcal Q}}
\def\CC{{\mathbb C}}                  
\def\PP{{\mathbb P}}
\def\OO{{\mathcal O}}
\def\R{{\mathbf R}}

\def\F{\mathcal{F}}
\def\E{\mathcal{E}}
\def\G{\mathcal{G}}

\def\I{\mathcal{I}}

\def\cP{\mathcal{P}}
\def\Pic0{{\rm Pic}^0}

\def\DD{{\mathbf{D}}}
\def\R{{\mathbf{R}}}

\def\MM{\mathcal M}
\def\T{\mathcal T}

\def\Ext{\mathrm{Ext}}
\def\Hom{\mathrm{Hom}}
\def\EExt{\mathcal{E}xt}

\def\TP{{\widetilde \cP}}

\def\N{\mathcal N}

\def\on{{\otimes n}}
\def\cR{{\mathcal R}}
\def\DDelta{{\Delta_Y}}

\theoremstyle{plain}

\newtheorem{theorem}{Theorem}[section]

\newtheorem{proposition/example}[theorem]{Proposition/Example}
\newtheorem{proposition}[theorem]{Proposition}

\newtheorem{corollary}[theorem]{Corollary}
\newtheorem{lemma}[theorem]{Lemma}

\newtheorem{claim}[theorem]{Claim}

\theoremstyle{definition}
\newtheorem{definition}[theorem]{Definition}

\newtheorem{notation}[theorem]{Notation}
\newtheorem{remark}[theorem]{Remark}

\newtheorem{conjecture/question}[theorem]{Conjecture/Question}

\newtheorem{remark/definition}[theorem]{Remark/Definition}
\newtheorem{notation/assumptions}[theorem]{Assumptions/Notation}

\numberwithin{equation}{section}

\pagestyle{myheadings} \theoremstyle{remark}

\begin{document}

\title{Gaussian maps and generic vanishing I: \\  subvarieties of abelian varieties}

\author[G. Pareschi]{Giuseppe Pareschi}
\address{Dipartamento di Matematica, Universit\`a di Roma, Tor Vergata, V.le della
Ricerca Scientifica, I-00133 Roma, Italy} \email{{\tt
pareschi@mat.uniroma2.it}}
\dedicatory{Dedicated to my teacher, Rob Lazarsfeld, on the occasion of his $60$th birthday}

\thanks{}

\date{\today}
\maketitle


\setlength{\parskip}{.1 in}

\markboth{G. PARESCHI} {Gaussian maps and generic vanishing I:  subvarieties of abelian varieties }

\begin{abstract}
The aim of this paper is to present an approach to Green-Lazarsfeld's generic vanishing combining gaussian maps  and the Fourier-Mukai transform associated to the Poincar\`e line bundle. As an application we prove the Generic Vanishing Theorem for all normal Cohen-Macaulay subvarieties of abelian varieties over an algebraically closed field. 
\end{abstract}

\section{Introduction}

We work with irreducible projective varieties on an algebraically closed field of any characteristic, henceforth called \emph{varieties}. The contents of this paper are:

\noindent (1)  a general criterion expressing the vanishing of the higher cohomology of a line bundle on a Cohen-Macaulay variety in terms of a certain first-order conditions on hyperplane sections (Theorem \ref{gaussian}). Such conditions involve  \emph{gaussian maps} and the criterion is a  generalization of well known results on hyperplane sections of K3 and abelian surfaces; 

\noindent (2) using a relative version of the above,  we prove the vanishing of higher direct images of Poincar\'e line bundles of normal Cohen-Macaulay  subvarieties of abelian varieties\footnote{by Poincar\'e line bundle of a subvariety $X$ of an abelian variety $A$ we mean the pullback to $X\times \Pic0 A$ of a Poincar\'e line bundle on $A\times \Pic0 A$} (Theorem \ref{subvarieties}).  As it is well known, this is equivalent to Green-Lazarsfeld's \emph{generic vanishing}, a condition satisfied by all irregular  compact Kahler manifolds (\cite{gl1}). This implies in turn a Kodaira-type vanishing  for  line bundles which are restrictions to normal Cohen-Macaulay subvarieties of abelian varieties, of ample line bundles on the abelian variety (Corollary \ref{kodaira}).

Concerning point (2), it should be mentioned that at present we are not able to extend efficiently this approach to the general Generic Vanishing Theorem (GVT),  i.e. for varieties \emph{mapping to} abelian varieties, even for smooth projective varieties over the complex numbers (where it is well known by the work of Green and Lazarsfeld). This will be the object of further research. However, concerning possible extensions of the general GVT  to singular varieties and/or to positive characteristic
one should keep in mind the work of Hacon and Kovacs \cite{hacko} where -- by exploiting the relation between GVT and the Grauert-Riemenschneider vanishing theorem --   they show examples of failure of the GVT for mildly singular varieties  (over $\CC$) and even smooth varieties (in characteristic $p>0$) of dimension $\ge 3$, with a (separable) generically finite map to an abelian variety. This disproved an erroneous theorem of a previous preprint of the author.


Now we turn to a more detailed presentation of the above topics.

\noindent \subsection{Motivation: gaussian maps on curves and vanishing of the {${\mathbf H^1}$} of line bundles on surfaces. }\label{motivation} We introduce part (1) starting from a particular case, where the essence of story becomes apparent: the vanishing of the $H^1$ of a line bundle on a surface in terms of gaussian maps on a sufficiently positive hyperplane section (Theorem \ref{intro} below). 

To begin with, let us recall what  gaussian maps are. Given a curve $C$ and a line bundle $A$ on $C$, denote $M_A$ the kernel of the evaluation map of global sections of $A$:
\[ 0\rightarrow M_A\rightarrow H^0(C,A)\otimes\OO_C\rightarrow A\]
 It comes equipped with a natural $\OO_C$-linear differentiation map
\[M_A\rightarrow \Omega^1_C\otimes A\]
 defined as 
\[M_A=p_*(\I_\Delta\otimes q^*A)\rightarrow p_*((\I_\Delta\otimes A)_{|\Delta})=\Omega^1_C\otimes A\]
where $p$, $q$ and $\Delta$ are the projections and the diagonal of the product $C\times C$.
Twisting with another line bundle $B$ and taking global sections one gets the \emph{gaussian map} (or \emph{Wahl map})  of $A$ and $B$:
\[\gamma_{A,B}: Rel(A,B):=H^0(C,M_A\otimes B)\rightarrow H^0(C,\Omega^1_C\otimes A\otimes B)\>\footnote
{the source is denoted $Rel(A,B)$, as it is the kernel of the multiplication of global sections of $A$ and $B$}\]
In our treatment it is more natural to set $A=N\otimes P$ and $B=\omega_C\otimes P^\vee$ for suitable line bundles $N$ and $P$ on the curve $C$, and to consider the dual  map
\begin{equation}\label{g}
g_{N,P}:\Ext^1_C(\Omega^1_C\otimes N,\OO_C)\rightarrow \Ext^1_C(M_{N\otimes P},P)
\end{equation}
Note that $g_{N,P}$ can defined directly (even if $\omega_C$ is not a line bundle) as \break 
$\Ext^1_C(\,\cdot\,,P)$ of the differentiation map of $M_{N\otimes P}$. 

The relation with  the vanishing of the $H^1$ of line bundles on surfaces is in the following result, whose proof follows closely arguments contained in the papers of Beaville and M\'erindol \cite{bm} and Colombo, Frediani and the author \cite{cfp}. Let $X$ be a Cohen-Macaulay surface and let $Q$ a line bundle on $X$. Let $L$ a base point-free line bundle on $X$ such that also $L\otimes Q$ is base point-free, and let $C$ be a (reduced and irreducible) Cartier divisor in $|L|$, not contained in the singular locus of $X$. Let $N_C=L_{|C}$ be the normal bundle of $C$. We have the extension class $$e\in \Ext^1_C(\Omega^1_C\otimes N_C,\OO_C)$$ of the normal sequence
\[0\rightarrow N_C^\vee\rightarrow (\Omega^1_X)_{|C}\rightarrow \Omega^1_C\rightarrow 0\]
 We consider the (dual) gaussian map
\begin{equation}\label{specialized}
g_{N_C,Q_{|C}}:\Ext^1_C(\Omega^1_C\otimes N_C,\OO_C)\rightarrow \Ext^1_C(M_{N_C\otimes Q},Q_{|C})
\end{equation}

\begin{theorem}\label{intro} (a) If $H^1(X,Q)=0$ then $e\in \ker (g_{N_C,Q_{|C}})$.\\
(b) If $L$ is sufficiently positive\footnote{by this  we mean that $L$ is a sufficiently high multiple of a fixed ample line bundle on $X$. } then also the converse holds: if $e\in \ker (g_{N_C,Q_{|C}})$ then $H^1(X,Q)=0$.
\end{theorem}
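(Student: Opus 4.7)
The strategy, following \cite{bm} and \cite{cfp} in this twisted Cohen-Macaulay setting, is to interpret $g_{N_C,Q|_C}(e)$ as the trace on $C$ of a global cohomological obstruction on $X$ whose vanishing is controlled by $H^1(X,Q)$. The first step is to compare the evaluation sequence $0\to M_{L\otimes Q}\to H^0(X,L\otimes Q)\otimes\OO_X\to L\otimes Q\to 0$ on $X$, restricted to $C$, with the evaluation sequence of $(L\otimes Q)|_C$ on $C$. Using the ideal sequence $0\to Q\to L\otimes Q\to (L\otimes Q)|_C\to 0$, whose long exact sequence in cohomology identifies the kernel and cokernel of the restriction map $H^0(X,L\otimes Q)\to H^0(C,(L\otimes Q)|_C)$ with $H^0(X,Q)$ and $W:=\ker(H^1(X,Q)\to H^1(X,L\otimes Q))$ respectively, the snake lemma produces a four-term exact sequence on $C$,
\[
0\to H^0(X,Q)\otimes\OO_C\to M_{L\otimes Q}|_C\to M_{(L\otimes Q)|_C}\to W\otimes\OO_C\to 0.
\]

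The main step is to identify $g(e)$ with a class coming from $W$. By the definition of $g_{N,P}$ preceding \eqref{g}, $g(e)$ is obtained by pulling back the twisted conormal extension $0\to Q|_C\to \Omega^1_X|_C\otimes N_C\otimes Q|_C\to \Omega^1_C\otimes N_C\otimes Q|_C\to 0$ along the differentiation $d_C\colon M_{(L\otimes Q)|_C}\to \Omega^1_C\otimes (L\otimes Q)|_C$ on $C$. On $X$ there is a global differentiation $d_X\colon M_{L\otimes Q}\to \Omega^1_X\otimes L\otimes Q$, well defined because $X$ is Cohen-Macaulay and $C$ misses the singular locus, and its restriction $d_X|_C$ is intertwined with $d_C$ via the quotient $M_{L\otimes Q}|_C\twoheadrightarrow M_{(L\otimes Q)|_C}$ from the four-term sequence and the conormal projection $\Omega^1_X|_C\to \Omega^1_C$. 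A Yoneda diagram chase with these compatibilities then exhibits $g(e)$ as the image of a canonical element of $W$ under a natural map $\Psi\colon W\to \Ext^1_C(M_{(L\otimes Q)|_C},Q|_C)$.

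Part (a) is then immediate: if $H^1(X,Q)=0$ then $W=0$, hence $g(e)=\Psi(0)=0$. For part (b), take $L$ to be a sufficiently high multiple of a fixed ample line bundle. Serre vanishing gives $H^1(X,L\otimes Q)=0$, so $W=H^1(X,Q)$, and further Serre vanishings (for auxiliary sheaves built from $\Omega^1_X$, $L$, and $Q$) arrange that $\Psi$ is injective. Combined, $g(e)=0$ forces $H^1(X,Q)=0$.

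The main obstacle is the middle step: explicitly identifying $g(e)$ with the image under $\Psi$ of a specific class in $W\subseteq H^1(X,Q)$ requires a careful Yoneda computation weaving together the four-term sequence, the twisted conormal extension, and the compatibility of $d_X$ with $d_C$. This is precisely where the Cohen-Macaulay hypothesis and the condition that $C$ misses the singular locus of $X$ enter: they ensure that $d_X$ is defined on $X$ and that the conormal sequence and the restriction $\Omega^1_X|_C$ are well behaved on $C$, so that the global-to-local comparison makes sense.
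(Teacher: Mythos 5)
Your overall architecture --- identify $g_{N_C,Q_{|C}}(e)$ with a class manufactured from the long exact sequence of $0\to Q\to L\otimes Q\to N_C\otimes Q_{|C}\to 0$, then run Serre vanishing in reverse for part (b) --- is the same as the paper's, which obtains Theorem \ref{intro} as the case $n=1$, $T=\{\mathrm{point}\}$ of Proposition \ref{Aprimo} and Theorem \ref{gaussian} (your four-term snake-lemma sequence is correct, and plays the role of the paper's sequence (\ref{c1})). But the pivotal middle step is asserted rather than proved, and in the form you state it cannot yield (b). There is no ``canonical element of $W$''; and if $g(e)$ were $\Psi(w_0)$ for a single $w_0\in W$ and a map $\Psi\colon W\to \Ext^1_C(M_{N_C\otimes Q},Q_{|C})$, then injectivity of $\Psi$ together with $g(e)=0$ would only give $w_0=0$, not $W=0$, so $H^1(X,Q)=0$ would not follow. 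The correct identification (this is Lemma \ref{key1} for $n=1$) is that $g(e)$ is the image of the \emph{homomorphism} $\beta\circ\alpha\in \Hom_k\bigl(H^0(C,N_C\otimes Q_{|C}),H^1(C,Q_{|C})\bigr)\cong \Ext^1_C\bigl(H^0(N_C\otimes Q_{|C})\otimes\OO_C,Q_{|C}\bigr)$ under pullback along $M_{N_C\otimes Q}\hookrightarrow H^0(N_C\otimes Q_{|C})\otimes\OO_C$, where $\alpha$ is the coboundary $H^0(C,N_C\otimes Q_{|C})\to H^1(X,Q)$ and $\beta$ the restriction $H^1(X,Q)\to H^1(C,Q_{|C})$; equivalently, the pullback along your surjection $M_{N_C\otimes Q}\twoheadrightarrow W\otimes\OO_C$ of $\beta_{|W}\in\Hom_k(W,H^1(C,Q_{|C}))$. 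Establishing this identity is genuine work (the paper does it via the auxiliary class $b$ and the Lazarsfeld sheaf of $N_C\otimes Q_{|C}$ on $X$, rather than your $M_{L\otimes Q}$), and it is the content you would need to supply.

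Even granting the corrected identification, part (b) is missing two inputs. First, to pass from $\beta_{|W}=0$ to $W=0$ you need $\beta$ injective, i.e.\ $H^1(X,Q\otimes L^{-1})=0$; you only invoke $H^1(X,L\otimes Q)=0$ (which gives $W=H^1(X,Q)$ and the surjectivity of $\alpha$). Second, and more seriously, the injectivity of the relevant pullback map is not a routine Serre vanishing for ``auxiliary sheaves built from $\Omega^1_X$, $L$ and $Q$'': by the long exact sequence of $0\to M_{N_C\otimes Q}\to H^0(N_C\otimes Q_{|C})\otimes\OO_C\to N_C\otimes Q_{|C}\to 0$, its kernel is the image of $h\colon H^1(C,N_C^{-1})\cong\Ext^1_C(N_C\otimes Q_{|C},Q_{|C})\to \Hom_k\bigl(H^0(N_C\otimes Q_{|C}),H^1(C,Q_{|C})\bigr)$, and what the paper's Lemma \ref{lemma2} actually proves is that any class in $\mathrm{im}(h)$ which, like $\beta\circ\alpha$, annihilates the image of $H^0(X,L\otimes Q)$ must vanish. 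This reduces to the surjectivity of the multiplication map $H^0(X,Q\otimes L)\otimes H^0\bigl(C,(\omega_X\otimes Q^{-1}\otimes L)_{|C}\bigr)\to H^0\bigl(C,(\omega_X\otimes L^{\otimes 2})_{|C}\bigr)$ --- a projective-normality type statement in which $\Omega^1_X$ plays no role. Until the identification is carried out and these two points are supplied, part (b) is not proved.
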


\noindent (note that $e$ is non-zero if $L$ sufficiently positive). For example, if $X$ is a smooth surface with trivial canonical bundle  and $Q=\OO_X$ then (a) says that if $X$ is a K3 then $e\in \ker(g_{K_C,\OO_C})$. This is a result of \cite{bm}. Conversely, (b) says that if $X$ is abelian  and $C$ is sufficiently positive then $e\not\in \ker(g_{K_C,\OO_C})$. This a result of  \cite{cfp}.

 \subsection{}\label{discussione} The proof is a calculation with extension classes whose geometric motivation is as follows. Suppose that $C$ is a curve in a surface $X$ and that $C$  is embedded in an ambient variety $Z$. From the cotangent sequence
\[0\rightarrow \I/\I^2\rightarrow (\Omega^1_Z)_{|C}\rightarrow \Omega^1_C\rightarrow 0\]
(where $\I$ is the ideal of $C$ in $Z$) one gets the long cohomology sequence
\begin{equation}\label{H-G}
\cdots\rightarrow \Hom_C(\I/\I^2,N_C^\vee)\buildrel{H_Z}\over\rightarrow \Ext^1_C(\Omega^1_C,N_C^\vee)\buildrel{G_Z}\over\rightarrow \Ext^1_C((\Omega^1_Z)_{|C},N_C^\vee)\rightarrow\cdots
\end{equation}
 The problem of extending the embedding $C\hookrightarrow Z$ to the surface $X$ has an natural first-order obstruction, namely the class $e$ must belong to $\ker G_Z={\mathrm Im}\, H_Z$. Indeed, as it is well known, if the divisor $2C$ on $X$, seen as a scheme, is embedded in $Z$,  then it lives (as embedded first-order deformation) in the $\Hom$ on the left\footnote{more precisely, the ideal of $2C$ in $Z$ induces the morphism of $\OO_Z$-modules $\I/\I^2\rightarrow N_C$ whose kernel is $\I_{2C/Z}/\I^2$, see e.g. \cite{bf} or \cite{ferrand} }. The forgetful map
$H_Z$, disregarding the embedding, takes it to  the class of the normal sequence $e\in  \Ext^1_C(\Omega^1_C,N_C^\vee)$. 

Now we specialize this to the case when the ambient variety  is a projective space, specifically: $$Z=\PP(H^0(C,N_C\otimes Q)^\vee):=\PP_Q$$  (in this informal discussion we are assuming, for simplicity, that the line bundle $L\otimes Q$ is very ample).  By the Euler sequence the map $G_{\PP_Q}$ is the (dual) gaussian map $g_{N_C,Q_{|C}}$ of (\ref{specialized}). Notice that in this case there is the special feature that our extension problem can be relaxed to the problem of extending the embedding of $C$ in $\PP_Q$ to an embedding of the surface $X$ in a possibly bigger projective space $\PP$, containing $\PP_Q$ as a linear subspace. However, since the restriction to $\PP_Q$ of the conormal sheaf of $C$ in  $\PP$ splits, this has the same first-order obstruction, namely $e\in \ker (g_{N_C,\Q_{|C}})$. 

The relation of all that with the vanishing of the $H^1$ is classical: the embedding of $C$ in $\PP_Q$ can be extended (in the above relaxed sense) to an embedding of $X$ if and only if the restriction map $\rho_X: H^0(X,L\otimes Q)\rightarrow H^0(C,N_C\otimes Q)$ is surjective. This is implied by the vanishing of $H^1(X,Q)$, so we get (a).  The converse is  a bit more complicated: by Serre vanishing, if $L$ is sufficiently positive the vanishing of $H^1(X,Q)$ is \emph{equivalent} to the surjectivity of the restriction map $\rho_X$, and also to the surjectivity of the restriction map $\rho_{2C}: H^0(2C,(L\otimes Q)_{|2C})\rightarrow H^0(C,N_C\otimes Q)$, hence to the fact that $2C$ 'lives' in $\Hom_C(\I/\I^2,N_C^\vee)$. Now if  $e$ is in the kernel of $g_{N_C,Q_{|C}}=G_{\PP_Q}$ then $e$ comes from some embedded deformations in $\Hom_C(\I/\I^2,N_C^\vee)$. However these do not  necessarily include $2C$. A more refined analysis 
proves that this is indeed the case as soon as $L$ is sufficiently positive. 

\subsection{Gaussian maps on hyperplane sections and vanishing. }\label{subsection2} The criterion of part (1) above is a generalization of the previous Theorem to higher dimension and to a relative (flat) setting. The relevant case deals with the vanishing of the $H^n$ of line bundle  on variety of dimension $n+1$ \footnote{ In fact, for all positive $k$, with $k<n$, the vanishing of $H^k$ can be reduced to this case, as it is equivalent (by Serre vanishing) to the vanishing of $H^k$ of the restriction of the given line bundle to a sufficiently positive $k+1$-dimensional hyperplane section}
\footnote{Note: one could think to use the equality $h^n(X,Q)=h^1(X,\omega_X\otimes Q^\vee)$ and then reduce, as in the previous footnote, to a surface. However, this is not possible in the relative case, since in general there is no Serre duality isomorphism of the direct images. Even in the non-relative case  the resulting criterion is usually more difficult to apply}.  To this purpose, we  consider "hybrid" gaussian maps as follows: let $C$ be a curve in a n-dimensional variety $Y$ and let $A_C$ be a line bundle on $C$.  The \emph{Lazarsfeld sheaf}  (see \cite{laz}), denoted $F^Y_{A_C}$, is the kernel of the evaluation map   of $A_C$, \emph{seen as a sheaf on $Y$}:
 \[0\rightarrow F^Y_{A_C}\rightarrow H^0(A_C)\otimes \OO_Y\rightarrow A_C\]
(note that $F^Y_{A_C }$ is never locally free if $\dim Y\ge 3$). As above, it comes equipped with a $\OO_Y$-linear differentiation map 
 \[F^Y_{A_C}\rightarrow \Omega^1_Y\otimes A_C\]
  If $B$ is a line bundle on $Y$, we define the \emph{ gaussian map of $A_C$ and $B$} as
\[\gamma^Y_{A_C,B}: Rel(A_C,B)=H^0( Y, F_{A_C}\otimes B)\rightarrow H^0(Y, \Omega^1_Y\otimes A_C\otimes B)\]
As above, we will rather use the dual map
\[g^Y_{M_C,R}: \Ext^n_Y(\Omega^1_Y\otimes M_C ,\OO_Y)\rightarrow \Ext^n_Y(F_{M_C\otimes R},R)\]
where $M_C$ and $R$ are line bundles respectively on $C$ and $Y$ such that $A_C=M_C\otimes R$ and $B=\omega_Y\otimes R^\vee$. Again, this map 
can defined directly (even if $\omega_Y$ is not a line bundle) as $\Ext^1_C(\,\cdot\,,R)$ of the differentiation map of $F^Y_{M_C\otimes R}$.  The case $n=1$ is recovered taking $Y=C$.

These maps can be extended to a relative flat setting. In this paper we consider only the simplest case, namely a family of line bundles on a fixed variety $Y$, as this is the only one needed in the subsequent applications. In the notation above, let $T$ be a another projective CM variety (or scheme), and let $\cR$ be a line bundle  on $Y\times T$. Let $\nu$ and $\pi$  denote the two projections, respectively on $Y$ and $T$. 
 We can consider the \emph{relative Lazarsfeld sheaf} $\F^Y_{M_C,\cR}$, kernel of the relative evaluation map 
 \[0\rightarrow\F^Y_{M_C,\cR}\rightarrow\pi^*\pi_*(\cR\otimes \nu^*M_C)\rightarrow \cR\otimes \nu^*M_C\] where, as above, we see $M_C$ as a sheaf on $Y$. 
The $\OO_{Y\times T}$-module $\F^Y_{M_C,\cR}$ is  equipped with its $\OO_{Y\times T}$-linear differentiation map (see Subsection \ref{preliminaries} below)
\begin{equation}\label{diff}\F^Y_{M_C,\cR}\rightarrow \nu^*(\Omega^1_Y\otimes M_C)\otimes \cR
\end{equation}
Applying $\Ext^n_{Y\times T}( \>\cdot\>,\cR)$ and restricting to the direct summand \break
$\Ext^n_Y(\Omega^1_Y\otimes M_C,\OO_Y)$ we get the (dual) gaussian map
$$g^Y_{M_C,\cR}: \Ext^n_Y(\Omega^1_Y\otimes M_C,\OO_Y)\rightarrow \Ext^n_{Y\times T}(\F^Y_{M_C,\cR}\, ,\cR)$$

The announced generalization of the  Theorem \ref{intro} is as follows. Let $X$ be a $n+1$-dimensional Cohen-Macaulay variety, let $T$ be a CM variety, and let $\Q$ be a line bundle on $X\times T$. In order to avoid heavy notation, we still denote $\nu$ and $\pi$ the two projections of $X\times T$ (however see Notation \ref{projections} below).
Let  $L$ be a line bundle on $X$,
with $n$ irreducible effective divisors $Y_1,\dots , Y_n\in |L|$ such that their intersection is an integral curve $C$  not contained in the singular locus of $X$. We assume also that the line bundle $\Q\otimes\nu^*L^\on$ is relatively base point-free, namely the relative evaluation map $\pi^*\pi_*(\Q\otimes\nu^*L^\on)\rightarrow \Q\otimes \nu^*L^\on$ is surjective. 
 We choose a divisor among $Y_1,\dots ,Y_n$, say $Y=Y_1$, such that $C$ is not contained in the singular locus of $Y$.  Let $N_C$ denote the line bundle $L_{|C}$. We consider the "restricted normal sequence" 
\begin{equation}\label{restricted normal}0\rightarrow N_C^\vee\rightarrow (\Omega^1_X)_{|C}\rightarrow (\Omega^1_Y)_{|C}\rightarrow 0
\end{equation}
Via the canonical isomorphism
\begin{equation}\label{koszul}\Ext^1_C(\Omega^1_Y\otimes N_C,\OO_C)\cong 
\Ext^n_Y(\Omega^1_Y\otimes N^{\otimes n}_C,\OO_Y)
\end{equation}
(see Subsection \ref{preliminaries} below)
we can see the class $e$ of (\ref{restricted normal})
as belonging to $\Ext^n_Y(\Omega^1_Y\otimes N^{\otimes n}_C,\OO_Y)$. Finally, we consider the (dual) gaussian map
\begin{equation}\label{specialized2}
g_{N_C^\on,\Q_{|Y\times T}}:\Ext^n_Y(\Omega^1_Y\otimes N_C^\on,\OO_Y)\rightarrow \Ext^n_{Y\times T}(\F^Y_{N_C^\on,\Q_{|Y\times T}}\, ,\Q_{|Y\times T})
\end{equation}
Then we have the following result, recovering part (b) the Theorem \ref{intro} as  the case $n=1$ and $T=\{point\}$ 

\begin{theorem}\label{gaussian}
 If $L$ is sufficiently positive and $e\in \ker (g^Y_{N^{\otimes n}_C,\Q_{|Y\times T}})$ then $R^n\pi_*\Q=0$.
\end{theorem}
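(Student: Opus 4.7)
The plan is to follow the heuristic of Subsection \ref{discussione}, adapted to higher dimension and to the relative setting over $T$: express the vanishing $R^n\pi_*\Q=0$ as the solvability of a first-order extension problem for the embedding of $C\times T$ into a relative projective bundle, and identify the natural obstruction of that problem with the image of $e$ under $g^Y_{N_C^{\otimes n},\Q_{|Y\times T}}$.

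First I would reduce the problem from $X\times T$ to $Y\times T$. From the short exact sequence $0\to\Q\to\Q\otimes\nu^*L\to (\Q\otimes\nu^*L)_{|Y\times T}\to 0$, applying $\pi_*$ and using Serre vanishing $R^n\pi_*(\Q\otimes\nu^*L)=0$ (valid for $L$ sufficiently positive), one sees that $R^n\pi_*\Q=0$ is equivalent to surjectivity of the relative restriction $R^{n-1}\pi_*(\Q\otimes\nu^*L)\to R^{n-1}\pi_*((\Q\otimes\nu^*L)_{|Y\times T})$. Iterating with further divisors in $|L|$ and repeated Serre vanishing, the problem reduces to surjectivity of an analogous restriction from a suitable infinitesimal thickening of $C$ inside $Y$ onto $C\times T$; equivalently, to the statement that a certain first-order extension of the embedding $C\times T\hookrightarrow \mathbf{P}_\Q:=\mathbf{P}(\pi_*((\Q\otimes\nu^*L^{\otimes n})_{|C\times T}))$ is realized inside $Y\times T$.

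Second, I would interpret this geometrically via first-order deformations, as in Subsection \ref{discussione}. The infinitesimal thickening of $C$ in $Y$ associated, through the Koszul resolution cutting out $C$ in $Y$, to the extension class $e$ of the restricted normal sequence (\ref{restricted normal}) is exactly the one carved by the divisor $Y=Y_1\subset X$. The surjectivity we need is the statement that this thickening extends inside $\mathbf{P}_\Q$, and its obstruction lies in the target of the gaussian map: the Euler sequence on $\mathbf{P}_\Q$, pulled back to $Y\times T$, induces on the Lazarsfeld sheaf a boundary map that, after the Koszul identification (\ref{koszul}) transporting $\Ext^1_{C\times T}$ to $\Ext^n_{Y\times T}$, coincides with $g^Y_{N_C^{\otimes n},\Q_{|Y\times T}}$. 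The hypothesis $e\in\ker(g^Y_{N_C^{\otimes n},\Q_{|Y\times T}})$ thus says precisely that this obstruction vanishes.

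Finally, vanishing of the first-order obstruction must be upgraded to the existence of the required geometric extension: the abstract deformation coming from the kernel must be realized by the concrete thickening induced by $Y\subset X$. As in the surface case, this is where Serre-type vanishings on $Y\times T$, guaranteed by taking $L$ a sufficiently high multiple of an ample bundle, again enter, rigidifying the relevant deformation spaces and killing higher obstructions. I expect the main obstacle to lie precisely in this step: carefully tracking the compatibility of the Koszul isomorphism (\ref{koszul}) with the Euler-sequence boundary map in the relative setting over $T$, and ensuring that the Lazarsfeld-sheaf construction $\F^Y_{N_C^{\otimes n},\Q_{|Y\times T}}$, which is typically not locally free when $n\geq 2$, behaves correctly with respect to the base change involved; this is where the relative base-point-freeness of $\Q\otimes\nu^*L^{\otimes n}$ becomes essential.
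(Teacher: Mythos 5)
Your outline reproduces the geometric heuristic that the paper itself presents as motivation (Subsections \ref{discussione} and \ref{discussione2}), and your first reduction (cutting down by divisors in $|L|$ and using Serre vanishing to convert $R^n\pi_*\Q=0$ into surjectivity of a restriction map from an infinitesimal thickening of $C$) is sound and is essentially how the paper connects the class $b$ to the vanishing, via the surjectivity of the Koszul coboundary $\alpha$ and the injectivity of $\beta$ in (\ref{composition}). But there is a genuine gap at the step you yourself flag as ``the main obstacle,'' and it is not the one you describe. The issue is not higher obstructions or base change for the Lazarsfeld sheaf: everything here is first order. The issue is that $e\in\ker(g)$ only tells you that $e$ is the image of \emph{some} class under the forgetful map $h$ of diagram (\ref{A}) (equivalently, that $e$ lifts to \emph{some} embedded first-order deformation), whereas what is equivalent to $R^n\pi_*\Q=0$ is the vanishing of one \emph{specific} class --- the class $b$ of Definition \ref{defb}, built from the Koszul resolution of $C$ in $X$ composed with the relative evaluation map, whose coboundary is exactly $\beta\circ\alpha$. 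Saying ``the obstruction vanishes'' does not by itself identify the abstract lift with the concrete thickening $2Y\cap Y_2\cap\dots\cap Y_n$; this is precisely the point the paper warns about at the end of Subsection \ref{discussione}.

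The paper closes this gap with two concrete ingredients that your proposal does not supply. First, Lemma \ref{key1}: an explicit Yoneda-extension computation showing $f(b)=g(e)$, which requires comparing the restriction to $Y\times T$ of the Koszul data on $X\times T$ with the hybrid Lazarsfeld sheaf $\F^Y$ (the splitting argument of Lemma \ref{claim2}). Second, Lemma \ref{lemma2}: the implication $f(b)=0\Rightarrow b=0$, which is \emph{not} bare injectivity of $f$ but uses the factorization $\delta_b=\beta\circ\alpha$ together with the injectivity of the maps $r$ and $\mu'\circ h'$ in diagram (\ref{aux}); the latter injectivity reduces, by Serre duality, to the surjectivity of the multiplication map of global sections (\ref{mult}), a projective-normality-type statement, not a vanishing of a single cohomology group. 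Your appeal to ``Serre-type vanishings rigidifying the relevant deformation spaces'' gestures at this but does not produce either the identity $f(b)=g(e)$ or the multiplication-map surjectivity, and without them the implication from $e\in\ker(g)$ to $R^n\pi_*\Q=0$ does not go through.
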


The following version is technically easier to apply

\begin{corollary}\label{B} Keeping the notation of Theorem \ref{gaussian},
if the line bundle $L$ is sufficiently positive then the kernel of the  map ${g}^Y_{N^{\otimes n}_{C},\Q_{|Y\times T}}$ is at most 1-dimensional (spanned by $e$). Therefore if  ${g}^Y_{N^{\otimes n}_{C},\Q_{|Y\times T}}$ is non-injective then  $R^n\pi_*\Q=0$. 
\end{corollary}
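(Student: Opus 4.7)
The second assertion is immediate from the first combined with Theorem \ref{gaussian}: non-injectivity of $g^Y_{N_C^{\otimes n},\Q_{|Y\times T}}$ forces any non-zero element of its kernel to be a scalar multiple of $e$, so $e\in\ker g$, and Theorem \ref{gaussian} then delivers $R^n\pi_*\Q=0$. All the real content lies in the dimension bound $\dim\ker g\le 1$.

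To obtain this bound I plan to exploit a long exact sequence of Ext groups. Since $g$ is obtained by applying $\Ext^n_{Y\times T}(-,\Q_{|Y\times T})$ to the differentiation map \eqref{diff} (and restricting to a direct summand), I would split that differentiation $D$ as the pair of short exact sequences $0\to\ker D\to \F^Y_{N_C^{\otimes n},\Q_{|Y\times T}}\to\mathrm{Im}\,D\to 0$ and $0\to\mathrm{Im}\,D\to\nu^*(\Omega^1_Y\otimes N_C^{\otimes n})\otimes\Q_{|Y\times T}\to\mathrm{coker}\,D\to 0$. A standard diagram chase across the two resulting Ext long exact sequences yields
\[
\dim\ker g\;\le\;\dim\Ext^n_{Y\times T}(\mathrm{coker}\,D,\Q_{|Y\times T})+\dim\Ext^{n-1}_{Y\times T}(\ker D,\Q_{|Y\times T}),
\]
and the task reduces to controlling these two groups.

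The main difficulty, and the heart of the argument, is to describe $\ker D$ and $\mathrm{coker}\,D$ concretely enough for the positivity of $L$ to take effect. As in the classical curve picture sketched in Subsection \ref{motivation}, $\ker D$ should be a relative second-order jet sheaf along the diagonal, morally $p_*(\I_\Delta^2\otimes q^*(\cdots))$, while $\mathrm{coker}\,D$ is supported where the relative evaluation fails to surject onto $C$. Because we are in the relative setting over $T$, one cannot invoke absolute Serre vanishing on $Y\times T$; instead I would use cohomology and base change along $\pi$ to convert both Ext groups into fibrewise computations on $Y$, and then apply relative Serre duality to rewrite them as cohomology groups twisted by increasingly negative powers of $L$. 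For $L$ a sufficiently high multiple of a fixed ample line bundle, Serre vanishing should kill the $\Ext^{n-1}$ term entirely and pin the remaining $\Ext^n$ term down to dimension at most one. Finally, the surviving one-dimensional subspace is identified with $\langle e\rangle$ via the geometric interpretation of Subsection \ref{discussione}: it corresponds to the essentially unique embedded first-order deformation of $C$ arising from the inclusion $C\subset Y\subset X$, whose image under the relevant boundary map is exactly the class of the restricted normal sequence \eqref{restricted normal}.
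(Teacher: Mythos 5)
Your deduction of the second assertion from the first is correct and is the same as the paper's. The gap is in your plan for the dimension bound. The inequality $\dim\ker g\le\dim\Ext^n_{Y\times T}(\mathrm{coker}\,D,\Q_{|Y\times T})+\dim\Ext^{n-1}_{Y\times T}(\ker D,\Q_{|Y\times T})$ is formally valid, but the second term cannot be killed by positivity of $L$; it grows without bound. Test it already in the case $n=1$, $T=\{\mathrm{point}\}$, $Q=\OO_X$ of Theorem \ref{intro}: there $\ker D=p_*(\I_\Delta^2\otimes q^*N_C)$ fits (for $L\gg 0$) into $0\rightarrow\ker D\rightarrow H^0(N_C)\otimes\OO_C\rightarrow P^1(N_C)\rightarrow 0$, where $P^1(N_C)$ is the first jet bundle, so $(\ker D)^\vee$ is a quotient of the trivial bundle of rank $h^0(N_C)-2$ and $\Ext^{n-1}(\ker D,\OO_C)=\Hom_C(\ker D,\OO_C)=H^0((\ker D)^\vee)$ has dimension at least $h^0(N_C)-2$, which tends to infinity with the positivity of $L$. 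Moreover, since $\mathrm{coker}\,D=0$ for $L\gg 0$ (the obstruction lives in an $R^1p_*$ of $\I_{\widetilde\Delta}^2\otimes q^*(\cdots)$, killed by Serre vanishing along the fibres $C$), if your program succeeded it would give $\ker g=0$ unconditionally, contradicting Proposition \ref{Aprimo}, which places the nonzero class $e$ in $\ker g$ whenever $H^n(X,Q)=0$. What your estimate throws away is that $g$ is only the restriction of $\Ext^n(D,\cdot)$ to the K\"unneth summand $\Ext^n_Y(\Omega^1_Y\otimes N_C^{\otimes n},\OO_Y)$; the smallness of $\ker g$ comes from the intersection of that summand with the image of $\Ext^{n-1}(\ker D,\cdot)$, which a crude count of the two Ext groups cannot detect.

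The paper's argument is of a different shape and supplies exactly the missing mechanism. It introduces a second dual gaussian map $g_X$ on the ambient variety $X\times T$, built from the kernel $\MM^X_{L^{\otimes n},\Q}$ of the relative evaluation map on $X\times T$, and places $g$ and $g_X$ in a commutative square (\ref{mappegaussiane}) with comparison maps $\mu$ and $\eta$. Passing to the Serre-dual square (\ref{duali}) --- which requires the separate verification of Claim \ref{additional}, since $X$ is only Cohen--Macaulay --- one proves for $L\gg 0$ that $g_X'$ is surjective (via the local freeness of $\MM^X_{L^{\otimes n},\Q}$ and the fibrewise surjectivity of the gaussian maps $\gamma_t$) and that $\mathrm{coker}\,\mu'$ is exactly one-dimensional, dual to the span of $e$ (Claim \ref{claim}). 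Commutativity then forces $\mathrm{Im}\,g'\supseteq\mathrm{Im}\,\mu'$, so $\mathrm{coker}\,g'$ is a quotient of $\mathrm{coker}\,\mu'$, and dually $\ker g\subseteq\langle e\rangle$. Any repair of your approach would need an analogous device that confines the kernel to the line spanned by $e$, rather than separate vanishing statements for $\Ext^n(\mathrm{coker}\,D,\cdot)$ and $\Ext^{n-1}(\ker D,\cdot)$.
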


Concerning the other implication what we can prove is
\begin{proposition}\label{Aprimo} (a) Assume that $T=\{point\}$. If $H^n(X,\Q)=0$ then $e\in
 \ker (g_{N_C^{\otimes n},\Q_{|Y}})$. \\
 (b) In  general, assume that $R^i\pi_*(\Q_{|Y\times T})=0$ for $i<n$. If  also $R^n\pi_*\Q=0$ then $e\in \ker (g^Y_{N_C^{\otimes n},\Q_{|Y\times T}})$.\\
\end{proposition}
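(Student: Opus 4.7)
The plan is to mirror the geometric argument of Theorem \ref{intro}(a) (described in Subsection \ref{discussione}) in the higher-dimensional and relative setting, using the Koszul isomorphism (\ref{koszul}). The goal is to realize $g^Y_{N_C^{\otimes n},\Q_{|Y\times T}}(e)$ as an obstruction class that factors through a connecting map which is forced to vanish by the hypotheses on $R^\bullet\pi_*$.

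First I would identify $g^Y(e)$ explicitly via a diagram chase on $Y\times T$ combining (i) the cotangent sequence (\ref{restricted normal}), (ii) the Koszul identification (\ref{koszul}) coming from the regular embedding $C\hookrightarrow Y$ of codimension $n-1$, and (iii) the defining exact sequence of the relative Lazarsfeld sheaf $\F^Y_{N_C^{\otimes n},\Q_{|Y\times T}}$. Dualizing the differentiation map (\ref{diff}) and transporting the extension class of (\ref{restricted normal}) through (\ref{koszul}) should exhibit $g^Y(e)$ as the image, under a specific connecting homomorphism, of a class built from $e$ and the relative evaluation map $\pi^*\pi_*(\Q\otimes\nu^*L^{\on})\twoheadrightarrow \Q\otimes\nu^*L^{\on}$.

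Second I would invoke the Koszul resolution of $\OO_C$ inside $X$ associated to the regular sequence $Y_1,\ldots,Y_n\in|L|$, tensored with $\Q\otimes\nu^*L^{\on}$. Applying $R\pi_*$ yields a hypercohomology spectral sequence whose $E_1$-terms are the $R^j\pi_*(\Q\otimes\nu^*L^{\otimes k})$ for $0\le k\le n$. Under the hypothesis of (b), an inductive analysis via the short exact sequences $0\to\Q\otimes\nu^*L^{\otimes(k-1)}\to\Q\otimes\nu^*L^{\otimes k}\to (\Q\otimes\nu^*L^{\otimes k})_{|Y\times T}\to 0$, together with the vanishings $R^i\pi_*(\Q_{|Y\times T})=0$ for $i<n$ (extended to their $L$-twists by the relative base-point-freeness of $\Q\otimes\nu^*L^{\on}$), kills the intermediate rows. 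The only remaining contribution to the connecting map representing $g^Y(e)$ is $R^n\pi_*\Q$; its vanishing therefore forces $g^Y(e)=0$. Case (a) is the specialization $T=\{\mathrm{point}\}$, where the same Koszul argument runs directly on $X$ and shows that $H^n(X,\Q)=0$ alone suffices, the intermediate cohomologies being absorbed by the base-point-freeness hypothesis on $\Q\otimes L^{\on}$.

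The principal technical obstacle is the diagram chase of the first step: matching the abstract gaussian construction (through the Lazarsfeld sheaf and the differentiation map (\ref{diff})) with the more geometric description of $e$ via the Koszul iso and the cotangent/normal sequences. Concretely, one must verify that (\ref{koszul}) is compatible with both the differentiation map and the restriction from $X\times T$ to $Y\times T$, so that $g^Y(e)$ genuinely coincides with the Koszul-complex connecting homomorphism. Once this compatibility is established, the spectral-sequence step is routine and only requires standard cohomology and base-change statements for the CM schemes involved.
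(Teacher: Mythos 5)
Your first step is essentially the paper's route: the paper constructs an explicit class $b\in\Ext^n_{Y\times T}(\pi_Y^*\pi_*(\Q\otimes\nu^*N_C^\on),\Q_{|Y\times T})$ out of the Koszul resolution of $\OO_C$ in $X$ (twisted by $\Q\otimes\nu^*L^\on$ and composed with the relative evaluation map) and proves in Lemma \ref{key1} that $f(b)=g(e)$, where $f$ is restriction along $\F^Y\hookrightarrow\pi_Y^*\pi_*(\Q\otimes\nu^*N_C^\on)$. You correctly flag this identification as the main technical obstacle, and the compatibilities you list (of (\ref{koszul}) with the differentiation map and with restriction from $X\times T$ to $Y\times T$) are indeed what has to be verified.

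The gap is in your second step, and it is a real one. The coboundary map $\delta_b\colon\pi_*(\Q\otimes\nu^*N_C^\on)\to R^n\pi_*(\Q_{|Y\times T})$ of the class $b$ factors through $R^n\pi_*\Q$ with no intermediate vanishing needed at all: the map $\alpha$ is simply the Yoneda edge map of the length-$n$ Koszul extension (\ref{koszulX}), so its target is $R^n\pi_*\Q$ by construction, and $R^n\pi_*\Q=0$ gives $\delta_b=0$ for free. What your argument never addresses is why the vanishing of $\delta_b$ --- a morphism of sheaves on $T$ --- forces the vanishing of the extension class $b$ itself, hence of $g(e)=f(b)$, which lives in $\Ext^n_{Y\times T}(\pi_Y^*\pi_*(\Q\otimes\nu^*N_C^\on),\Q_{|Y\times T})$. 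These are different objects, and passing from one to the other is exactly where the hypothesis of part (b) is spent in the paper: the spectral sequence $\Ext^i_T(\pi_*(\Q\otimes N_C^\on),R^j\pi_*(\Q_{|Y\times T}))\Rightarrow\Ext^{i+j}_{Y\times T}(\pi_Y^*\pi_*(\Q\otimes\nu^*N_C^\on),\Q_{|Y\times T})$ degenerates because $R^j\pi_*(\Q_{|Y\times T})=0$ for $j<n$, yielding $\Ext^n\cong\mathrm{Hom}_T(\pi_*(\Q\otimes N_C^\on),R^n\pi_*(\Q_{|Y\times T}))$, i.e.\ $b$ is determined by $\delta_b$. (In case (a) this identification is automatic because $\pi_Y^*\pi_*(\cdots)$ is a trivial bundle when $T$ is a point --- not because intermediate cohomologies are ``absorbed by base-point-freeness.'') Your proposal instead spends the hypothesis on killing intermediate terms of the Koszul hypercohomology spectral sequence on $X\times T$; besides being unnecessary for the factorization of $\delta_b$, the claim that $R^i\pi_*(\Q_{|Y\times T})=0$ for $i<n$ ``extends to $L$-twists by relative base-point-freeness'' is unjustified, since base-point-freeness gives no vanishing of higher direct images of twists. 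As written, the proof establishes only that a certain induced map vanishes, not that the class $g(e)$ does.
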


\subsection{}\label{discussione2} To motivate these statements, let us go back to the informal discussion of Subsection   \ref{discussione}. We assume for simplicity that $T=\{point\}$. Let $X$ be a $n+1$-dimensional variety and $C$ a curve in $X$ as above. It is easily seen, using  the Koszul resolution  of the ideal of $C$ and Serre vanishing, that the vanishing of $H^n(X,Q)$ implies the surjectivity of the restriction map \break
$\rho_X: H^0(X,L^{\otimes n}\otimes Q)\rightarrow H^0(C, N_C^{\otimes n}\otimes Q)$, and in fact the two conditions are equivalent as soon as $L$ is sufficiently positive. Hence it is natural to look for first order obstructions to extending to $X$  an embedding of the curve $C$ (a $1$-dimensional complete intersection of linearly equivalent divisors of $X$) into 
$$\PP_Q:=\PP(H^0(C,N_C^\on\otimes Q)^\vee)\>\>.$$ More generally, we can consider the same problem for any given ambient variety $Z$, rather than projective space. 

  To find a first-order obstruction one cannot anymore replace $X$ with the first order neighborhood of $C$ in $X$. We rather have to pick a divisor in $|L|$ containing $C$, say $Y=Y_1$ and replace $X$ with the scheme $2Y\cap Y_2\cap \dots \cap Y_n$. 
In analogy with the case of curves on surfaces, it is  natural to consider the long cohomology sequence
\begin{equation}\label{general}
\cdots\rightarrow \Hom_(\I_Y/\I_Y^2,N_C^\vee)\buildrel{H^Y_Z}\over\rightarrow \Ext^1_C((\Omega^1_Y)_{|C},N_C^\vee)\buildrel{G^Y_Z}\over\rightarrow \Ext^1_C((\Omega^1_Z)_{|C},N_C^\vee)\rightarrow\cdots
\end{equation}
(where $\I_Y$ is the ideal of $Y$ in $Z$).
As above, a necessary condition for the lifting to $X$ of the embedding of $C\hookrightarrow Z$ is that the "restricted normal class" $e$ of (\ref{restricted normal}) belongs to $\ker (G^Y_Z)$.  

However, looking for \emph{sufficient} conditions for  lifting  (in the relaxed sense, as in Subsection \ref{discussione})   the embedding $C\hookrightarrow \PP_Q$ to $X$, 
one cannot assume that  the divisor $Y$ is already embedded in $\PP_Q$. This is the reason why, differently from the  case when $X$ is a surface,  the map $g_{N_C^{\otimes n},Q_{|Y}}$ appearing in the statement of Theorem \ref{gaussian} and Corollary \ref{B} is not the map  $G^Y_{Z}$ with $Z=\PP_Q$, but rather a slightly more complicated "hybrid" version of (dual) gaussian map. After this modification, the geometric motivation for Theorem \ref{gaussian} is similar to that of Subsection \ref{discussione}.

\subsection{Generic vanishing for subvarieties of abelian varieties}\label{subsection3} Although difficult -- if not impossible -- to use in  most cases, the above results can be applied in some very special circumstances. For example, in analogy with the literature on curves sitting on K3 surfaces and Fano 3-folds, Proposition \ref{Aprimo} can supply non-trivial necessary conditions for a $n$-dimensional variety to sit in some very special  $n+1$-dimensional varieties.

 However in this paper we rather focus on the sufficient condition for vanishing provided by Theorem \ref{gaussian} and   Corollary \ref{B}, as it provides an  approach to  \emph{generic vanishing}, a far-reaching concept introduced by Green and Lazarsfeld in the papers \cite{gl1} and \cite{gl2}.
 Namely we consider a variety $X$ with a map to an abelian variety, generically finite onto its image
 \begin{equation}\label{a}a:X\rightarrow A
 \end{equation}
 Denoting $\Pic0A=\widehat A$ the dual variety, we consider the pullback to $X\times \widehat A$ of a Poincar\'e line bundle $\cP$ on $A\times \widehat A$:
 \begin{equation}\label{poincare}
 \Q=(a \times \mathrm{id}_{\widehat A})^*\cP
 \end{equation}
 We keep the notation of the previous section. In particular we denote  $\nu$ and $\pi$ the projections of $X\times \widehat A$. A  way of expressing generic vanishing is the vanishing  of higher direct images
 \begin{equation}\label{vanishing}R^i\pi_*\Q=0\quad \hbox{for} \quad i<\dim X
 \end{equation}
 For smooth varieties over the complex numbers (\ref{vanishing}) was proved (as a particular case of a more general statement) by Hacon (\cite{hacon}), settling a conjecture of Green and Lazarsfeld. Another way of expressing the generic vanishing condition involves the \emph{cohomological support loci}  
$$V^i_a(X)=\{\alpha\in\Pic0A\>|\> h^i(X, a^*\alpha)>0\} \>\>\>.
$$ 
Green-Lazarsfeld's theorem (\cite{gl1}, \cite{gl2}) is that, if the map $a$ is generically finite, then
  \begin{equation}\label{GL}\mathrm{codim}_{\widehat A}\,V^i_a(X)\ge \dim X-i\>\> \footnote{In general, if the map $a$ is not generically finite, Hacon's and Green-Lazarsfeld's theorems are respectively  $R^i\pi_*\Q=0$ for $i<\dim a(X)$ and $\mathrm{codim}_{\Pic0A}\,V^i_a(X)\ge \dim a(X)-i$. However they can be reduced to the case of generically finite $a$ by taking sufficiently positive hyperplane sections of dimension equal to the rank of $a$} .
   \end{equation}
   It is easy to see that (\ref{vanishing}) implies (\ref{GL}). Subsequently, it has been observed in \cite{pp1} and \cite{pp2} that (\ref{vanishing}) is in fact \emph{equivalent} to (\ref{GL})\footnote{in \cite{pp2} this is stated only in the smooth case, but this hypothesis is unnecessary}.     The heart of Hacon's proof of (\ref{vanishing}) consists in a clever reduction to Kodaira-Kawamata-Viehweg vanishing, while the argument of Green and Lazarsfeld for (\ref{GL}) uses Hodge theory. Both need the characteristic zero and that the variety $X$ is smooth (or with rational singularities). 
   
   On the other hand
  a characteristic-free example of both (\ref{vanishing}) and (\ref{GL}) is given by abelian varieties themselves (Mumford, \cite{M} p.127). Here we extend this by proving  that (\ref{vanishing}) (and, therefore,  (\ref{GL})) holds  for normal Cohen-Macaulay subvarieties of abelian varieties on an algebraically closed field of any characteristic:

 \begin{theorem}\label{subvarieties}  In the above notation, assume that $X$ is normal Cohen-Macaulay and the morphism $a$ is an embedding. Then $R^i\pi_*\Q=0$ for all $i<\dim X$.
 \end{theorem}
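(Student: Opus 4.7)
The plan is to apply Corollary \ref{B} to obtain the top-degree vanishing $R^{d-1}\pi_*\Q=0$ (with $d=\dim X$), and to reduce the lower-degree vanishings $R^i\pi_*\Q=0$ for $i<d-1$ to this case by hyperplane sections. For the reduction, fix an ample line bundle on $A$, let $L$ be its restriction to $X$ raised to a sufficiently high power, and let $X'\subset X$ be the complete intersection of $d-i-1$ general divisors in $|L|$. A Bertini-type argument shows that $X'$ is a normal Cohen-Macaulay subvariety of $A$ of dimension $i+1$, not contained in the singular locus of $X$. Relative Serre vanishing (via fiberwise Serre duality and ampleness of $L$) gives $R^q\pi_*(\Q\otimes\nu^*L^{-j})=0$ for all $q<d$ and $j\ge 1$, so the spectral sequence of the Koszul resolution of $\OO_{X'}$ on $X$ (tensored with $\Q$ and pushed forward by $\pi$) identifies $R^i\pi_*\Q$ with $R^i\pi_*(\Q_{|X'\times\widehat A})$; the latter is the top-degree direct image for the subvariety $X'\subset A$. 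We thus reduce to the top-degree case.

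For the top-degree vanishing, apply Corollary \ref{B} with $n=d-1$ and $T=\widehat A$. Choose $L$ on $X$ sufficiently positive and general divisors $Y_1,\dots,Y_n\in|L|$ meeting in an integral curve $C$ transversal to the singular loci of $X$ and of $Y:=Y_1$; set $N_C=L|_C$. Corollary \ref{B} then reduces the problem to showing that the extension class $e$ of the restricted normal sequence (\ref{restricted normal}) lies in the kernel of the (dual) gaussian map
\[
g^Y_{N_C^{\otimes n},\,\Q_{|Y\times\widehat A}}\colon\;\Ext^n_Y(\Omega^1_Y\otimes N_C^{\otimes n},\OO_Y)\longrightarrow \Ext^n_{Y\times\widehat A}\bigl(\F^Y_{N_C^{\otimes n},\Q_{|Y\times\widehat A}},\;\Q_{|Y\times\widehat A}\bigr).
\]

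The verification $e\in\ker g^Y$ is the main obstacle, and it is where the geometry of the abelian variety enters through the triviality of $\Omega^1_A$. The restricted normal sequence on $C$ is the pushout, along the surjection $N_{Y/A}^\vee|_C\twoheadrightarrow L^\vee|_C=N_C^\vee$, of the conormal sequence of $Y\subset A$ restricted to $C$,
\[
0\to N_{Y/A}^\vee|_C \to \Omega^1_A|_C \to \Omega^1_Y|_C \to 0,
\]
whose middle term $\Omega^1_A|_C\cong\OO_C^{\oplus g}$ is trivial. Via the Koszul isomorphism (\ref{koszul}), $e$ therefore factors through a class whose middle term is a trivial $\OO_Y$-module. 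Relatively on $Y\times\widehat A$, applying $\Ext^n_{Y\times\widehat A}(\cdot,\Q_{|Y\times\widehat A})$ to the differentiation map (\ref{diff}) and precomposing with this factorization expresses $g^Y(e)$ as a combination of classes living in Ext groups whose source involves only the trivial rank-$g$ bundle; these reduce to contributions of the form $R^q\pi_*\Q_{|Y\times\widehat A}$ with $q<\dim Y$. By induction on $d$ (the base case $d=1$ being immediate), the theorem applied to the normal Cohen-Macaulay subvariety $Y\subset A$ of dimension $d-1$ makes these vanish, forcing $g^Y(e)=0$. The Bertini and relative Serre vanishing inputs used for the hyperplane-section reduction, and the transversality arrangements for $C$, are routine once $L$ is taken sufficiently positive.
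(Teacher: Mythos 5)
Your overall frame matches the paper's: reduce the lower direct images to the top one by sufficiently positive hyperplane sections and relative Serre vanishing, handle the top degree via Theorem \ref{gaussian}/Corollary \ref{B} applied to the Poincar\'e bundle, and feed in the triviality of $\Omega^1_A$ through the class $e$ of (\ref{restricted normal}). Your observation that $e$ is a pushout of the restricted conormal sequence of $Y\subset A$ says exactly that $e\in\ker G^Y_A$ in the notation of (\ref{general}) (compare Remark \ref{e}). The gap is the passage from $e\in\ker G^Y_A$ to $e\in\ker g^Y_{N_C^{\otimes n},\Q_{|Y\times\widehat A}}$. You assert that, because the middle term $\Omega^1_A|_C$ is trivial, $g^Y(e)$ ``reduces to contributions of the form $R^q\pi_*\Q_{|Y\times\widehat A}$ with $q<\dim Y$'' which vanish by induction on the dimension; no mechanism is given, and the natural candidate fails for a directional reason. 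The map $g$ is $\Ext^n_{Y\times\widehat A}(\,\cdot\,,\Q_{|Y\times\widehat A})$ of the differentiation map $\F^Y\rightarrow \nu^*(\Omega^1_Y\otimes N_C^{\otimes n})\otimes\Q$; what the ambient abelian variety provides is the surjection $\Omega^1_{A,0}\otimes\OO_C\rightarrow(\Omega^1_Y)_{|C}$ \emph{onto the target} of that differentiation map, equivalently a factorization of the composite with the larger sheaf $R^0\Phi(\I_{\Delta_Y/Y\times A}\otimes p_2^*N_C^{\otimes n})$ --- not a factorization of $g$ itself through $\Ext^n$ of the trivial bundle. So $e\in\ker G^Y_A$ only kills the image of $g(e)$ under a further map, not $g(e)$. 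This is no accident: by Proposition \ref{Aprimo} combined with Theorem \ref{gaussian}, the vanishing of $g(e)$ is essentially \emph{equivalent} (for $L$ sufficiently positive) to the vanishing of $R^n\pi_*\Q$, so any soft argument that ``$g(e)$ obviously vanishes'' is proving too much too cheaply.

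Bridging $\ker G^Y_A$ and $\ker g$ is precisely what occupies Sections \ref{section4}--\ref{section6} of the paper: the gaussian map is rewritten through the Fourier--Mukai transform (Lemma \ref{firstreduction} and diagram (\ref{subvarietiesFM})); Proposition \ref{secondreduction} shows that non-vanishing of the connecting map $a_n$ produces a nonzero element of $\ker g$, so that Corollary \ref{B} applies; and $a_n\neq 0$ is then proved using the $\Lambda^\bullet T_{A,0}$-module structure of the relevant $\Ext$'s, the explicit splitting in degree $d-1$ (Proposition \ref{summand}), and the non-vanishing $\phi_{W_n}(e)\neq 0$ (Lemma \ref{non-zero}). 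None of this is a formality, and your induction on $\dim X$ does not substitute for it. Two smaller points: the case $i=0$ escapes your reduction, since for a curve section the ``top-degree'' statement is $R^0\pi_*\Q=0$, which lies outside the gaussian-map machinery and is handled in the paper by a separate torsion-freeness and support argument; and note that the paper ultimately establishes only $\ker G^Y_A\cap\ker g\neq 0$ (enough for Corollary \ref{B}), not the stronger containment $e\in\ker g$ that you aim for.
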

 
 The strategy of the proof consists  in applying Theorem \ref{gaussian} to the Poincar\'e line bundle $\Q$.  In order to do so we take a  general complete intersection $C=Y\cap Y_2\cap \dots\cap Y_n$ of $X$, with $Y_i\in |L|$, where, as above, $L$ is a sufficiently positive line bundle on $X$ and $n+1=\dim X$. 
The main issue of the argument consists in comparing two spaces of first-order deformations:  the first is  the kernel of the (dual) gaussian map $g_{N_C^\on,\Q_{Y\times \widehat A}}$.  
The second is the kernel of the map $G^Y_Z$  of (\ref{general}) with $Z$ equal to the ambient abelian variety $A$ \footnote{this is simply the dual of the multiplication map
 $V\otimes H^0( N_C\otimes \omega_C)\rightarrow H^0(\Omega^1_Y\otimes N_C\otimes\omega_C)$, where $V$ is the cotangent space of $A$ at the origin} (by (\ref{koszul}) the two maps have the same source). As in the discussion of Subsection \ref{discussione2}, the variety $X\subset A$ induces naturally, via the restricted normal extension class $e$, a non-trivial element of $\ker G^Y_A$. Hence, to get the vanishing of $R^n\pi_*\Q$ it would be enough to prove that $\ker G^Y_A$ is contained in $\ker g_{N_C^\on,\Q_{Y\times \widehat A}}$, or at least -- in view of Corollary \ref{B} --  that
 the intersection of $\ker G^Y_A$ and $\ker g_{N_C^\on,\Q_{|Y\times \widehat A}}$ is non-zero. This analysis is accomplished by means of the  Fourier-Mukai transform associated to the Poincar\`e line bundle\footnote{we remark, incidentally, that  (\ref{vanishing}) for abelian varieties (Mumford's theorem) is the key point assuring that the Fourier-Mukai transform is an equivalence of categories}. In doing this we were inspired by the classical papers \cite{mu1} and \cite{kempf} where it was solved the conceptually related problem of comparing the first-order embedded deformations of a curve in its jacobian and the first-order deformations of the Picard bundle  on the dual.

  The vanishing of $R^i\pi_*\Q$ for $i<n$ follows from this step, after reducing to a sufficiently positive $i+1$-dimensional hyperplane section.
 
 Note that conditions (\ref{GL}) can be expressed dually as
 $$\mathrm{codim}_{\Pic0A}\bigl\{\alpha\in\Pic0A\>|\> h^i(\omega_X\otimes\alpha)>0\>\bigr\}\ge i\qquad\hbox{for all}\quad i>0$$
According to the terminology of \cite{pp2}, this is stated saying that the dualizing sheaf $\omega_X$ is a GV-sheaf.  As a first application of Theorem \ref{gaussian} we note that, combining with Proposition 3.1 of \cite{pp3} ("GV tensor $IT_0\rightarrow IT_0$") we get the following Kodaira-type vanishing
 
 \begin{corollary}\label{kodaira} Let $X$ be a normal Cohen-Macaulay subvariety of an abelian variety $A$, and let $L$ be an ample line bundle on $A$. Then $H^i(X,\omega_X\otimes L)=0$ for all $i>0$.
 \end{corollary}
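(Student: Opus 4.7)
The plan is to deduce the corollary by combining Theorem \ref{subvarieties} with the ``preservation of vanishing'' principle of Pareschi--Popa (Proposition 3.1 of \cite{pp3}), applied to the push-forward $a_*\omega_X$ on the ambient abelian variety $A$.

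First I would translate the conclusion of Theorem \ref{subvarieties} into a GV condition for $\omega_X$. By the equivalence recalled just before (\ref{GL}) in the excerpt, the vanishing $R^i\pi_*\Q=0$ for $i<\dim X$ is equivalent to $\mathrm{codim}_{\widehat A}\,V^i_a(X)\ge\dim X-i$ for all $i$. Since $X$ is Cohen--Macaulay, Serre duality yields the identification $h^i(X,\omega_X\otimes a^*\alpha^{-1})=h^{\dim X-i}(X,a^*\alpha)$ for every $\alpha\in\widehat A$, so the loci $V^i(\omega_X):=\{\alpha\mid h^i(X,\omega_X\otimes a^*\alpha)>0\}$ are translates of $V^{\dim X-i}_a(X)$, and the codimension estimate becomes $\mathrm{codim}_{\widehat A}\,V^i(\omega_X)\ge i$ for all $i\ge 0$. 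In the terminology of \cite{pp2} this is the statement that $\omega_X$ is a GV-sheaf; since $a$ is a closed embedding, $a_*\omega_X$ is in turn a GV-sheaf on $A$.

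Next I would observe that $L$ is $IT_0$ on $A$: for every $\alpha\in\widehat A$ the twist $L\otimes\alpha$ is still ample (ampleness is numerical), and so $H^i(A,L\otimes\alpha)=0$ for all $i>0$ by Mumford's vanishing theorem for ample line bundles on abelian varieties, which holds in any characteristic. Proposition 3.1 of \cite{pp3} asserts that the tensor product of a GV-sheaf with an $IT_0$-sheaf on an abelian variety is again $IT_0$; applied to $a_*\omega_X$ and $L$, it gives that $a_*\omega_X\otimes L$ is $IT_0$ on $A$.

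Finally, the projection formula plus the fact that $a$ is a closed immersion give $a_*\omega_X\otimes L\cong a_*(\omega_X\otimes L_{|X})$, and hence $H^i(A,a_*\omega_X\otimes L)=H^i(X,\omega_X\otimes L_{|X})$; evaluating the $IT_0$ property at the trivial character $\alpha=\OO$ yields $H^i(X,\omega_X\otimes L_{|X})=0$ for all $i>0$, which is the corollary. The only substantive input here is Theorem \ref{subvarieties} itself, and the closest this argument comes to an obstacle is the need to check that the Serre-duality reindexing of the support loci and the statement of Proposition 3.1 of \cite{pp3} both remain valid in the normal Cohen--Macaulay, arbitrary-characteristic setting; in both cases this is standard, once one works with the dualizing sheaf $\omega_X$ and the GV/$IT_0$ formalism of \cite{pp2}, \cite{pp3} at the level of coherent sheaves on $A$.
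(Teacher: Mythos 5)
Your proposal is correct and follows exactly the route the paper takes: Theorem \ref{subvarieties} is reinterpreted via Serre duality as the statement that $\omega_X$ is a GV-sheaf, and Proposition 3.1 of \cite{pp3} (``GV tensor $IT_0\rightarrow IT_0$'') applied to $a_*\omega_X$ and the $IT_0$ line bundle $L$ yields the vanishing. The paper gives only this one-line deduction, so your fuller write-out of the reindexing of the support loci and the projection-formula step is a faithful expansion of the same argument.
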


 It seems possible that these methods can find application in wider generality.
 
The paper is organized as follows: in Section \ref{section2} we prove Theorem \ref{gaussian} (and Proposition \ref{Aprimo}).  Section \ref{section3} contains the proof of Corollary \ref{B}. In Sections \ref{section4}  and \ref{section5} we establish the set up of the argument for Theorem \ref{subvarieties}. In particular, we interpret gaussian maps in terms of the Fourier-Mukai transform.  The conclusion of the proof of Theorem \ref{subvarieties} takes up Section \ref{section6}.

\section{Proof of Theorem \ref{gaussian} and Proposition \ref{Aprimo}}\label{section2}

\subsection
{Preliminaries} \label{preliminaries} The argument consists of a computation with extension classes. The geometric motivation  is outlined in the Introduction (Subsections  \ref{discussione} and \ref{discussione2}). To get a first idea of the argument, it could be helpful to  have a look at the proof of  Lemma 3.1 of  \cite{cfp} .  

\begin{notation}\label{projections}  In the first place, some warning about the notation. We have the three varieties $C\subset Y\subset X$ (respectively of dimension $1$, $n$ and $n+1$). 
The projections of $X\times T$ onto $X$ and $T$ are denoted respectively $\nu$ and $\pi$. It will be different to consider the relative evaluation maps of a sheaf ${\mathcal A}$ on $C\times T$ seen as a sheaf  on $Y\times T$, or on $X\times T$, or on $C\times T$ itself: their kernels are the various different relative  Lazarsfeld sheaves attached to ${\mathcal A}$ in different ambient varieties (see Subsection \ref{subsection2}). Therefore we denote 
$$\pi_Y=\pi_{|Y\times T}\qquad\pi_C=\pi_{|C\times T}$$
For example,  on $Y\times T$ we have 
\begin{equation}\label{FY}
0\rightarrow \F^Y_{A,\Q_{|Y\times T}}\rightarrow \pi_{Y}^*\pi_*(\Q\otimes\nu^*A)\rightarrow \Q\otimes\nu^*A\end{equation}
while on $X\times T$
\begin{equation}\label{FX}0\rightarrow \F^X_{A,\Q}\rightarrow \pi^*\pi_*(\Q\otimes\nu^*A)\rightarrow \Q\otimes\nu^*A
\end{equation}
\end{notation}

Next, we clarify a few points appearing in the Introduction.

\medskip\noindent \textbf{The differentiation map (\ref{diff}).  } We describe explicitly the differentiation map (\ref{diff}). We keep the notation there: $M_C$ is a line bundle on the curve $C$ while $\cR$ is a line bundle on $Y\times T$. Now let $p$,$q$ and $\widetilde\Delta$ denote the two projections and the diagonal of the fibred product $(Y\times T)\times_T(Y\times T)$.
Concerning the Lazarsfeld sheaf $\F^Y_{M_C,\cR} $ we claim that there is a canonical isomorphism 
\begin{equation}\label{basechange}\F^Y_{M_C,\cR}\cong p_*(\I_{\widetilde\Delta}\otimes q^*(\cR\otimes \nu^*M_C))
\end{equation}
Admitting the claim, the differentiation map (\ref{diff}) is defined as usual, as $p_*$ of the restriction to $\widetilde\Delta$. The isomorphism (\ref{basechange}):
 in the first place $p_*(\I_{\widetilde\Delta}\otimes q^*(\cR\otimes \nu^*M_C))$ is the kernel of the map ($p_*$ of the restriction map)
 \begin{equation}\label{r} p_*q^*(\cR\otimes \nu^*M_{C})\rightarrow p_*q^*((\cR\otimes \nu^*M_{C})_{|\widetilde\Delta})\cong \cR \otimes \nu^*M_{C}
 \end{equation}
 (it is easily seen that the sequence $0\rightarrow \I_{\widetilde\Delta}\rightarrow \OO_{{Y}\times_T{Y}}\rightarrow \OO_{\widetilde\Delta}\rightarrow 0$ remains exact when restricted to $(Y\times T)\times_T (C\times T)$). To prove (\ref{basechange}) we note that, by flat base change, $$\pi_{Y}^*{\pi}_*(\cR\otimes \nu^*M_{C})\cong p_*q^*(\cR\otimes \nu^*M_{C})$$
  and, via such isomorphism, the map  (\ref{r}) is identified to the relative evaluation map.
  
\medskip \noindent\textbf{The isomorphism (\ref{koszul}).  }
  This follows from the spectral sequence
\[\mathrm{Ext}^i_C(\Omega^1_{Y}\otimes N^\on_{C},\, \mathcal{E}xt^j_Y(\OO_C,\OO_Y))\Rightarrow \mathrm{Ext}^{i+j}_Y(\Omega^1_{Y}\otimes N^\on_{C},\OO_Y)\] using that, being $C$ the complete intersection of $n-1$ divisors in  the linear system $|L_{|Y}|$,  we have that $\mathcal{E}xt^j_Y(\OO_C,\OO_Y)=N^{\on-1}_{C}$ if $j=n-1$ and zero otherwise. 
Seeing the elements of $\mathrm{ Ext}$-groups as higher extension classes with their natural multiplicative structure (Yoneda $\mathrm{Ext}$'s, see e.g. \cite{eisenbud} Chapter III), we denote 
\begin{equation}\label{koszulext}\kappa\in \mathrm{Ext}^{n-1}_Y(\OO_C, L^{\otimes -(n-1)}_{|Y})
\end{equation} the extension class of the Koszul resolution of $\OO_C$ as  $\OO_Y$-module
\begin{equation}\label{koszulrel}0\rightarrow L_{|Y}^{\otimes-(n-1)}\rightarrow\cdots\rightarrow {(L_{|Y}^{\otimes-1})}^{\oplus n-1}\rightarrow \OO_Y\rightarrow\OO_C\rightarrow 0 \>\>.
\end{equation}
Then the multiplication with $\kappa$
\[\Ext^1_C(\Omega^1_Y\otimes N_C,\OO_C)\buildrel{\cdot \kappa}\over\rightarrow \Ext^n_Y(\Omega^1_Y\otimes N_C, L_{|Y}^{\otimes -(n-1)})\cong\Ext^n_Y(\Omega^1_Y\otimes N^\on_C,\OO_Y)
\]
is an isomorphism coinciding, up to scalar, with (\ref{koszul})).


\subsection{First step (statement)} 
\begin{notation}\label{abbreviation} From this point we will adopt the hypotheses and the notation  of Theorem \ref{gaussian}. We also adopt the following typographical abbreviations:
$$\F^Y=\F^Y_{N_C^\on\, ,\Q_{|Y\times T}}\qquad\qquad\F^X=\F^X_{N_C^\on\, ,\Q}\qquad\qquad g=g_{N_C^\on, \Q_{|Y\times T}}$$
\end{notation}
The first, and most important, step of the proof of Theorem \ref{gaussian} and Proposition \ref{Aprimo} consists in an explicit calculation of the class $g(e)$. This is the content of Lemma \ref{key1} below. The strategy is as follows. 
Applying $\Ext_{Y\times Y}^n(\>\cdot\>,\Q_{|Y\times T})$ to the basic sequence
\[0\rightarrow \F^Y\rightarrow \pi_{Y}^*\pi_*(\Q\otimes\nu^*N_C^\on)\rightarrow \Q\otimes\nu^*N_C^\on \rightarrow 0\]
(namely (\ref{FY}) for $A=N_C^\on$ and $\cR=\Q_{|Y\times T}$ \footnote{the surjectivity on the right follows from the hypotheses of Theorem \ref{gaussian}}), we get the following diagram with exact (in the middle) column
\begin{equation}\label{A}
\entrymodifiers={+!!<0pt,\fontdimen22\textfont2>}\xymatrixcolsep{1pc}\xymatrixrowsep{1.5pc}\xymatrix{&\Ext^n_{Y\times T}(\Q\otimes\nu^*N_C^\on,\Q_{|Y\times T})\ar[d]^h\\& \Ext^n_{Y\times T}(\pi_{Y}^*{\pi}_*(\Q\otimes\nu^*N_C^\on),\Q_{|Y\times T})\ar[d]^f\\  \Ext^n_{Y}(\Omega^1_Y\otimes N_C^{\otimes n},\OO_{Y})\ar[r]^g&\Ext^n_{Y\times T}(\F^Y,\Q_{|Y\times T})}
\end{equation}
In Definition \ref{defb} below will produce a certain class $b$ in the source of $f$, namely
\begin{equation}\label{b}
b\in \Ext^n_{Y\times T}(\pi_{Y}^*{\pi}_*(\Q\otimes\nu^*N_C^\on),\Q_{|Y\times T}) 
\end{equation}
such that its coboundary map 
$$\delta_b: {\pi}_*(\Q\otimes\nu^*\N_C^\on)\rightarrow R^n\pi_*(\Q_{|Y\times T})$$ is the composition 
\begin{equation}\label{composition}
\xymatrixcolsep{1pc}\xymatrixrowsep{1pc}\xymatrix{{\pi}_*(\Q\otimes\nu^*\N_C^\on)\ar[r]^-\alpha\ar[rd]^-{\delta_b}&R^n\pi_*(\Q)\ar[d]^-\beta\\
&R^n\pi_*(\Q_{|Y\times T})} 
\end{equation}
where the horizontal map $\alpha$ is the coboundary map of the natural extension of $\OO_{X\times T}$-modules
\begin{equation}\label{koszulX}
 0\rightarrow \Q\rightarrow \cdots\rightarrow (\Q\otimes \nu^*L^{\otimes n-1})^{\oplus n}\rightarrow \Q\otimes \nu^*L^\on
 \rightarrow \Q\otimes \nu^*N_C^\on\rightarrow 0
 \end{equation}
($\nu^*$ of the Koszul resolution of $\OO_C$ as $\OO_X$-module, twisted by $\Q\otimes\nu^*L^\on$) and the vertical map $\beta$ is simply $R^n\pi_*$ of the restriction map \ $\Q \rightarrow \Q_{|Y\times T}$ .
 The main Lemma is
 \begin{lemma}\label{key1} 
\  $ f(b)=g(e)$. 
 \end{lemma}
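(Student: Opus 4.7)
The plan is to realize both $g(e)$ and $f(b)$ as explicit Yoneda $n$-extensions in $\Ext^n_{Y\times T}(\F^Y, \Q_{|Y\times T})$ and to exhibit a commutative ladder of morphisms of complexes identifying them. The template is the proof of Lemma 3.1 of \cite{cfp}, which handles the (notably simpler) case $n=1$; what is needed here is its higher-dimensional, relative analogue.

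To unfold $g(e)$, recall that by construction $g$ is the functor $\Ext^n_{Y\times T}(-, \Q_{|Y\times T})$ applied to the differentiation map $d: \F^Y \to \nu^*(\Omega^1_Y \otimes N_C^{\otimes n}) \otimes \Q_{|Y\times T}$, after restriction to the summand $\Ext^n_Y(\Omega^1_Y\otimes N_C^{\otimes n},\OO_Y)$. Hence $g(e)$ equals the Yoneda product of $d$, viewed as a 1-step extension, with $e$. Using (\ref{koszulext})--(\ref{koszulrel}), $e$ itself factors as $\kappa \cdot e_0$, where $e_0 \in \Ext^1_C((\Omega^1_Y)_{|C}\otimes N_C, \OO_C)$ is the class of the restricted normal sequence (\ref{restricted normal}) and $\kappa$ is the class of the Koszul resolution of $\OO_C$ as an $\OO_Y$-module. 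Thus $g(e)$ unfolds concretely as an $n$-step extension obtained by splicing: the 1-step differentiation map $d$, the $(n-1)$-step Koszul extension $\kappa$, and the 1-step restricted normal extension $e_0$, suitably twisted by $\Q_{|Y\times T}$ and pulled back to $Y\times T$.

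To compute $f(b)$, the class $b$ will be constructed in Definition \ref{defb} as an $n$-extension whose coboundary factors through $R^n\pi_*\Q$ as $\beta\circ\alpha$. The natural way to produce such a $b$ is to splice the Koszul-type extension (\ref{koszulX}) on $X\times T$ (i.e.\ $\nu^*$ of the Koszul resolution of $\OO_C$ on $X$ twisted by $\Q\otimes\nu^*L^{\otimes n}$) with the restriction map $\Q\to\Q_{|Y\times T}$. Pulling $b$ back along the inclusion $\F^Y\hookrightarrow \pi_Y^*\pi_*(\Q\otimes\nu^*N_C^{\otimes n})$ then yields an explicit $n$-extension on $Y\times T$ whose middle terms, after restriction to $Y\times T$, will match term-by-term with the unfolded form of $g(e)$ once one identifies (a) the Koszul resolution of $\OO_C$ as $\OO_X$-module, truncated and restricted to $Y$, with the Koszul resolution of $\OO_C$ as $\OO_Y$-module plus an extra step coming from the inclusion $Y\hookrightarrow X$, and (b) the restriction map $\Q\to\Q_{|Y\times T}$ with the restricted normal class $e_0$ through the description (\ref{basechange}) of $\F^Y$.

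The main obstacle is the careful diagrammatic bookkeeping required to show these two $n$-extensions are Yoneda-equivalent. Concretely, I would construct a single commutative ladder of $\OO_{Y\times T}$-modules whose top and bottom rows are the chosen representative complexes for $f(b)$ and $g(e)$, with vertical maps given by the differentiation map on the left end, the natural restriction maps in the middle, and the identity on the right. Commutativity will rest on three essentially formal ingredients: functoriality of the Koszul complex under $Y\hookrightarrow X$, compatibility of $\nu^*$ with this restriction, and the description (\ref{basechange}) of $\F^Y$, which is precisely the place where $d$ is glued to the restricted normal sequence. Once the ladder is exhibited, Yoneda equivalence is automatic. No deep ingredient beyond extension-class algebra is needed, but the tracking of the various twists, signs, and pullbacks through the intermediate terms of the two $n$-term extensions is delicate and accounts for the length of the argument.
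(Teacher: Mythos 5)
Your proposal is correct and follows essentially the same route as the paper: both classes are unfolded into explicit Yoneda $n$-extensions built from the Koszul data and the restricted normal sequence, the restriction to $Y\times T$ of the Koszul resolution of $\I_{C/X}$ is split into the $Y$-Koszul piece plus the extra conormal step (the paper's Lemma \ref{claim2}), and the identification is completed by a commutative ladder hinging on the diagonal description (\ref{basechange}) of the Lazarsfeld sheaf, exactly as in the paper's comparison of $(\F^X)_{|Y\times T}\rightarrow\F^Y$ with the pullback of $\nu^*(e)$ along the differentiation maps. The only slip is counting the differentiation map $d$ as a $1$-step extension (it is a morphism, so your splicing gives $0+(n-1)+1=n$ steps, as it must), which does not affect the argument.
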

 
 Note that this will already prove Proposition \ref{Aprimo}. Indeed, if $T=\{point\}$ then the vector space of (\ref{b}) is 
\begin{equation}\label{dimAprimo}\Ext^n_{Y}(H^0(\Q\otimes\nu^*N_C^\on)\otimes \OO_Y,\Q_{|Y})\cong \Hom_k(H^0(\Q\otimes\nu^*N_C^\on),
H^n(Y,\Q_{|Y}))
\end{equation}
hence the class $b$ coincides, up to scalar, with its coboundary map $\delta_b$. 
From the description of $\delta_b$ we have  that  $\delta_b=0$ if $H^n(X,\Q)=0$. If this is the case, then Lemma \ref{key1} says that $g(e)=0$, proving  Proposition \ref{Aprimo} in this case. If $\dim T>0$  we consider the spectral sequence 
\[\Ext^i_{ T}({\pi}_*(\Q\otimes N_C^\on) , R^{j}{\pi}_*(\Q_{|Y\times T}))
\Rightarrow\Ext^{i+j}_{Y\times T}
(\pi_{Y}^*{\pi}_*(p^*(\Q\otimes \nu^*N_C^\on)) ,\Q_{|Y\times T})\]
coming from the isomorphism 
\[\R\Hom_{T}({\pi}_*(\Q\otimes N_C^\on) , \R{\pi}_*(\Q_{|Y\times T})) \cong
 \R\Hom_{Y\times T}
(\pi_{Y}^*{\pi}_*(p^*(\Q\otimes \nu^*N_C^\on)) ,\Q_{|Y\times T})\]
Since we are assuming that $R^i\pi_*(\Q_{|Y\times T})=0$ for $i<n$, the spectral sequence degenerates providing an isomorphism as (\ref{dimAprimo}), and Proposition \ref{Aprimo} follows in the same way.\qed

Next, we give the definition of the class $b$ of (\ref{b}). In order to do so, we introduce some additional notation
\begin{notation}\label{K}We denote ${\mathbf K}_{C,X}^\bullet$ (resp. ${\mathbf H}_{C,Y}^\bullet$) the $\nu^*$ of the Koszul resolution of  the ideal of $C$ is $X$  tensored with $\Q\otimes\nu^*L^\on$ (resp. $\nu^*$ of the Koszul resolution resolution of $\OO_C$ as $\OO_Y$-module, tensored with $\Q\otimes \nu^*L_{|Y}^{\otimes n-1}$ ):
\[\begin{matrix}{\mathbf K}_{C,X}^\bullet&  0\rightarrow& \Q& \rightarrow \cdots \rightarrow(\Q\otimes\nu^*L^{\otimes n-2})^{\oplus {n\choose 2}} \rightarrow&(\Q\otimes \nu^*L^{\otimes n -1})^{\oplus n}\\
{\mathbf H}_{C,Y}^\bullet& 0\rightarrow &\Q_{|Y\times T}&\rightarrow \cdots \rightarrow (\Q\otimes \nu^*L_{|Y}^{\otimes n-2})^{\oplus n-1}\rightarrow &\Q\otimes \nu^*L_{|Y}^{\otimes n-1}\end{matrix}
\]
(note that they have the same length). 
 For example, with this notation  the exact complex of $\OO_{X\times T}$-modules (\ref{koszulX}) is written as
 \begin{equation}\label{KOSZULX}
  {\mathbf K}_{C,X}^\bullet \rightarrow Q\otimes L^\on\rightarrow \Q\otimes \nu^*N_C^\on\rightarrow 0
 \end{equation}
 \end{notation}
 \begin{definition}[The class $b$ of (\ref{b})]\label{defb}
 Composing (\ref{KOSZULX}) with the relative evaluation map of $\Q\otimes \nu^*N_C^\on$ (seen as a sheaf on $X\times T$)
$$\pi^*\pi_*(\Q\otimes \nu^*N_C^\on)\rightarrow \Q\otimes \nu^*N_C^\on$$
we get the commutative exact diagram
\begin{equation}\label{sequence}
\begin{matrix}
  {\mathbf K}_{C,X}^\bullet& \rightarrow&\E&\rightarrow & \pi^*\pi_*(\Q\otimes \nu^*N_C^\on)&\rightarrow 0\\
  \Vert&&\downarrow&&\downarrow\\
  {\mathbf K}_{C,X}^\bullet & \rightarrow & Q\otimes L^\on&\rightarrow& \Q\otimes \nu^*N_C^\on&\rightarrow 0
  \end{matrix}
\end{equation}
where $\E$ is a $\OO_{X\times T}$-module. 
Since $tor_{X\times T}^i(\pi^*\pi_*(\Q\otimes \nu^*N_C^\on),\nu^*\OO_Y)=0$ for $i>0$, restricting the top row of (\ref{sequence}) to $Y\times T$ we get an \emph{exact} complex of $\OO_{Y\times T}$-modules
\begin{equation}\label{sequenceY}
 ( {\mathbf K}_{C,X}^\bullet)_{|Y\times T}\rightarrow\E_{|Y\times T} \rightarrow 
\pi_{Y}^*\pi_*(\Q\otimes \nu^*N_C^\on)\rightarrow 0
\end{equation}
We define the class $b\in \Ext^n_{Y\times T}(\pi_{Y}^*{\pi}_*(\Q\otimes\nu^*N_C^\on),\Q_{|Y\times T}) $ of (\ref{b}) as the extension class of the  exact complex (\ref{sequenceY}). The assertion about its coboundary map follows from its definition. 
\end{definition}
We will need the following
\begin{lemma}\label{claim2}    The row of the following diagram
\begin{equation}\label{sequenceY2}
 \xymatrixcolsep{1pc}\xymatrixrowsep{0.5pc} \xymatrix{{\mathbf H}_{C,Y}^\bullet\ar[rr]\ar[rd]
  &&\E_{|Y\times T} \ar[r]&
\pi_{Y}^*\pi_*(\Q\otimes\nu^*N_C^\on)\rightarrow 0\\
&\Q\otimes \nu^*N_C^{\otimes n-1}\ar[ur]\ar[dr]\\
0\ar[ur]&&0\\} 
\end{equation}
is an exact complex having the same extension class of (\ref{sequenceY}), namely  $b\in \Ext^n_{Y\times T}(\pi_{Y}^*{\pi}_*(\Q\otimes\nu^*N_C^\on),\Q_{|Y\times T}) $.
\end{lemma}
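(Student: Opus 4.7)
The plan is to exploit the Koszul decomposition of the defining ideal of $C$ in $X$ to relate the two complexes. Write $f_1,\ldots,f_n\in H^0(X,L)$ for the defining sections, with $f_1$ cutting out $Y$. The Koszul complex factors as $K(f_1,\ldots,f_n)=K(f_1)\otimes_{\OO_X}K(f_2,\ldots,f_n)$, and restricting to $Y\times T$ kills multiplication by $f_1$. Consequently, on $Y\times T$ the Koszul basis vectors containing the wedge factor $e_1$ form a genuine sub-complex whose differential uses only $f_2,\ldots,f_n$. After tensoring with $\Q\otimes\nu^*L^{\otimes n}$ this sub-complex is canonically identified with $\mathbf{H}^\bullet_{C,Y}$, and in fact $(\mathbf{K}^\bullet_{C,X})_{|Y\times T}\cong \mathbf{H}^\bullet_{C,Y}\oplus G^\bullet$ as a direct sum of complexes of $\OO_{Y\times T}$-modules, with $G^\bullet$ an appropriate truncation of $\mathbf{H}^\bullet_{C,Y}\otimes\nu^*L_{|Y}$ shifted one position to the right.

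First I would verify exactness of the row in (\ref{sequenceY2}). The initial segment $\mathbf{H}^\bullet_{C,Y}\to \Q\otimes\nu^*N_C^{\otimes n-1}\to 0$ is precisely the Koszul resolution of $\OO_C$ as an $\OO_Y$-module tensored with $\Q\otimes\nu^*L_{|Y}^{\otimes n-1}$, hence exact. For the map $\Q\otimes\nu^*N_C^{\otimes n-1}\to \E_{|Y\times T}$, I would use the pullback description of $\E$ from Definition \ref{defb}: the kernel of $\E_{|Y\times T}\to \pi_Y^*\pi_*(\Q\otimes\nu^*N_C^{\otimes n})$ is naturally the subsheaf $\I_{C/Y}\cdot(\Q\otimes\nu^*L_{|Y}^{\otimes n})$, and the Koszul resolution of $\OO_C$ on $Y$ then furnishes a natural identification of this kernel with $\Q\otimes\nu^*N_C^{\otimes n-1}$, realizing the determinantal isomorphism $N_C^{\otimes n-1}\cong\det N_{C/Y}$.

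To show the two exact complexes represent the same class $b$, I would produce a Yoneda chain map between (\ref{sequenceY}) and (\ref{sequenceY2}) that is the identity on the extreme terms $\Q_{|Y\times T}$ and $\pi_Y^*\pi_*(\Q\otimes\nu^*N_C^{\otimes n})$. Using the direct-sum decomposition from the first paragraph, the inclusion $\mathbf{H}^\bullet_{C,Y}\hookrightarrow (\mathbf{K}^\bullet_{C,X})_{|Y\times T}$ gives the chain map on the Koszul side. The complementary summand $G^\bullet$, being quasi-isomorphic to $\I_{C/Y}\cdot(\Q\otimes\nu^*L_{|Y}^{\otimes n})$ concentrated in a single degree, gets absorbed into the middle object $\E_{|Y\times T}$ via the pullback structure; this absorption is exactly what produces the term $\Q\otimes\nu^*N_C^{\otimes n-1}$ appearing in (\ref{sequenceY2}). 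By Yoneda's criterion for equivalence of extensions, the two complexes then represent the same class.

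The main obstacle will be matching the middle maps: verifying that the natural morphism $\Q\otimes\nu^*N_C^{\otimes n-1}\to \E_{|Y\times T}$ arising from the pullback description and the Koszul syzygy is injective with the claimed image, and that it makes (\ref{sequenceY2}) commute. The bookkeeping with twists, signs, and the determinantal identification $N_C^{\otimes n-1}\cong\det N_{C/Y}$ is the delicate technical point; once this is carried out, both exactness of (\ref{sequenceY2}) and the Yoneda equivalence with (\ref{sequenceY}) follow essentially formally from the Koszul decomposition.
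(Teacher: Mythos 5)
Your first paragraph is exactly the paper's argument: restrict the Koszul complex to $Y\times T$, note that it splits as the direct sum of the sub-complex spanned by the monomials divisible by $e_1$ (which, after the twist, is precisely ${\mathbf H}^\bullet_{C,Y}$) and a complementary summand resolving $\I_{C/Y}\otimes\Q\otimes\nu^*L^\on$, and then delete the complementary acyclic piece without changing the Yoneda class. Up to that point the proposal is correct and matches the paper.

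The second paragraph, however, contains a genuine error. The kernel of $\E_{|Y\times T}\to\pi_Y^*\pi_*(\Q\otimes\nu^*N_C^\on)$ is \emph{not} $\I_{C/Y}\cdot(\Q\otimes\nu^*L_{|Y}^\on)$: restricting $0\to\I_{C/X}\otimes\Q\otimes\nu^*L^\on\to\E\to\pi^*\pi_*(\Q\otimes\nu^*N_C^\on)\to 0$ to $Y\times T$ (the tor-vanishing of Definition \ref{defb} keeps this exact) identifies the kernel with $(\I_{C/X}\otimes\OO_Y)\otimes\Q\otimes\nu^*L^\on\cong(\I_{C/Y}\oplus N_C^{-1})\otimes\Q\otimes\nu^*L^\on$, where the extra summand $N_C^{-1}=tor_1^{\OO_X}(\OO_C,\OO_Y)$ is the conormal of $Y$ restricted to $C$. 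It is this $N_C^{-1}$ summand, i.e. the $e_1$-direction, that produces the middle term $\Q\otimes\nu^*N_C^{\otimes n-1}$ of (\ref{sequenceY2}); indeed ${\mathbf H}^\bullet_{C,Y}$ is by definition the Koszul resolution of $\OO_C$ on $Y$ twisted by $\Q\otimes\nu^*L_{|Y}^{\otimes n-1}$, so its augmentation lands directly on $\Q\otimes\nu^*N_C^{\otimes n-1}$. Your proposed ``determinantal'' identification of $\I_{C/Y}\cdot(\Q\otimes\nu^*L_{|Y}^\on)$ with $\Q\otimes\nu^*N_C^{\otimes n-1}$ cannot hold: the former is a rank-one torsion-free sheaf on all of $Y\times T$, the latter is supported on $C\times T$. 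So you have the roles of the two summands interchanged: the correct bookkeeping is that $G^\bullet$, spliced with its image $\I_{C/Y}\cdot(\Q\otimes\nu^*L_{|Y}^\on)\subset\E_{|Y\times T}$, forms an acyclic direct summand of (\ref{sequenceY}) vanishing at the two extreme terms, and deleting it (which in particular replaces $\E_{|Y\times T}$ by its quotient modulo $\I_{C/Y}\cdot(\Q\otimes\nu^*L_{|Y}^\on)$ --- a point the paper's own formulation of (\ref{sequenceY2}) leaves implicit, since with $\E_{|Y\times T}$ itself the row fails exactness at that spot for $n>1$) yields the equivalent extension (\ref{sequenceY2}). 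With this correction your argument goes through and is the paper's proof.
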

\begin{proof} 
 For $n=1$, i.e. $C=Y$, there is nothing to prove. For $n>1$, recall that, by its definition, the top row of (\ref{sequence}) is 
 \[\xymatrixcolsep{1pc}\xymatrixrowsep{0.5pc}\xymatrix{{\mathbf K}^\bullet_{C,X}\ar[dr]\ar[rr]&&\E\ar[r]&  \pi^*\pi_*(\Q\otimes \nu^*N_C^\on)\rightarrow 0\\ &\I_{C/X}\otimes\Q\otimes \nu^*L^\on \ar[ur]\ar[dr]\\
0\ar[ur]&&0} \]
 Recalling that the curve
   $C$ is the complete intersection $Y_1\cap\dots \cap Y_n$, with $Y_i\in |L|$ , and that $Y=Y_1$, restricting the ideal sheaf $\I_{C/X}$ to $Y$ one gets   $\I_{C/Y}\oplus N_C^{-1}$.
   Accordingly the Koszul resolution of $\I_{C/X}$, restricted to $Y$, splits as  the direct sum sum of the Koszul resolution of $\I_{C/Y}$ and the Koszul resolution of $\OO_C$, as $\OO_Y$-module, tensored with $L_{|Y}^{-1}$:
   \[0\rightarrow \begin{matrix}0\\ \oplus\\ L_{|Y}^{- n}\end{matrix}\rightarrow \cdots\rightarrow \begin{matrix}(L_{|Y}^{-2})^{\oplus {{n-1} \choose 2}}\\
   \oplus\\ (L_{|Y}^{-2})^{\oplus n-1}\end{matrix} \>\>\rightarrow\>\> \begin{matrix}(L_{|Y}^{-1})^{\oplus n-1}\\
   \oplus\\ L_{|Y}^{-1}\end{matrix} \>\>\begin{pmatrix}&\I_{C/Y}\\ \rightarrow&\oplus&\rightarrow 0\\
   &N_C^{-1}\end{pmatrix}\]
   Now restricting the exact complex (\ref{sequence}) to $Y$ one gets the exact complex (\ref{sequenceY}) whose "tail", namely the exact complex  $({\mathbf K}^\bullet_{C,X})_{|Y\times T}$
splits as above. Therefore deleting the exact complex corresponding to the above upper row
 one gets the equivalent -- as extension~-- exact complex (\ref{sequenceY2}). This proves the claim.
\end{proof}

\subsection{First step (proof) } In this subsection  we prove Lemma \ref{key1}. We first compute $g(e)$ \footnote{This argument follows \cite{v} p. 252)}. The exact sequences defining $\F^X$ and $\F^Y$ (see Notation \ref{abbreviation}) fit into the commutative diagram
 $$
 \begin{matrix}0\rightarrow&\F^X&\rightarrow& \pi^*\pi_*(\Q\otimes\nu^*N_C^\on)&\rightarrow& \Q\otimes\nu^*N_C^\on&\rightarrow 0\\
 &\downarrow&&\downarrow&&\Vert\\
 0\rightarrow&\F^Y&\rightarrow& \pi_{Y}^*\pi_*(\Q\otimes\nu^*N_C^\on)&\rightarrow& \Q\otimes\nu^*N_C^\on &\rightarrow 0\end{matrix}
 $$
yielding, after restricting the top row to $Y\times T$, the exact sequence
 \begin{equation}\label{c1}0\rightarrow \Q\otimes \nu^*N_C^{\on-1}\rightarrow ({\F^X})_{|Y\times T}\rightarrow \F^Y\rightarrow 0
 \end{equation}
 where the sheaf on the left is  \ \ $tor_1^{\OO_{X\times T}}(\Q\otimes \nu^*N_C^\on,\OO_{Y\times T})$.

This sequence in turn fits into the
  commutative diagram with exact rows
   $$
 \begin{matrix}0\rightarrow&\Q\otimes \nu^*N_C^{\on-1}\rightarrow\!\!\!\!&({\F^X})_{|Y\times T}&\!\!\!\!\rightarrow\!\!\!\!&\F^Y&\!\!\!\!\!\!\rightarrow 0\\
 &\Vert&\downarrow&&\downarrow\\
 0\rightarrow&\Q\otimes\nu^*N_C^{\on-1}\rightarrow\!\!\!\!&\Q\otimes\nu^*(\Omega^1_X\otimes N_C^\on)&\!\!\!\!\rightarrow\!\!\!\!&\Q\otimes\nu^*(\Omega^1_Y\otimes N_C^\on)&\!\!\!\!\!\rightarrow0
 \end{matrix}
 $$
where the class of the bottow row is $\nu^*(e)\in \nu^* \Ext^1_C(\Omega^1_Y\otimes N_C,\OO_C).$
 It follows that  $g(e)$ (where now $e$ is seen in $\Ext^n_Y(\Omega^1_Y\otimes N^{\otimes n}_C,\OO_Y) $, see (\ref{koszul}) and Subsection \ref{preliminaries})  ) is the class of the sequence (\ref{c1}) with ${\mathbf H}_{C,Y}^\bullet$ attached on the left
  \begin{equation}\label{image}
 {\mathbf H}_{C,Y}^\bullet \rightarrow ({\F^X})_{|Y\times T}\rightarrow \F^Y\rightarrow 0 \>\>.
  \end{equation}
  
  Next, we compute $f(b)$. The exact complex (\ref{sequence}) is the middle row of the commutative exact diagram
    \begin{equation}\label{f(b)}
  \begin{matrix}&&0&& 0\\&&\downarrow&&\downarrow\\
  &&\F^X&\Relbar&\F^X\\
  &&\downarrow&&\downarrow\\
   {\mathbf K}_{C,X}^\bullet&\rightarrow&\E&\rightarrow& 
\pi^*\pi_*(\Q\otimes\nu^*N_C^\on)&\rightarrow 0\\
\Vert&&\downarrow&&\downarrow\\
{\mathbf K}_{C,X}^\bullet& \rightarrow&\Q\otimes\nu^*L^\on&\rightarrow&\Q\otimes\nu^*N_C^\on&\rightarrow0\\
&&\downarrow&&\downarrow\\
 && 0&&0\end{matrix}
  \end{equation}
  This provides us with the commutative exact diagram
  $$\begin{matrix}&0&&0\\ &\downarrow&&\downarrow\\
   ({\mathbf H}_{C,Y}^\bullet)_{|Y\times T}\rightarrow &(\F^X)_{|Y\times T}&\rightarrow&\F^Y&\rightarrow 0\\&\downarrow&&\downarrow\\
   &\E_{|Y\times T}&\rightarrow&\pi_Y^*\pi_*(\Q\otimes\nu^*N_C^\on)&\rightarrow 0\end{matrix}
  $$
 where the long row  is (\ref{image}), whose class is $g(e)$. 
By Lemma \ref{claim2}, we can complete the above diagram as follows
 \[
  \begin{matrix}{\mathbf H}_{C,Y}^\bullet&\rightarrow&({\F^X})_{|Y\times T}&\rightarrow&\F^Y&\rightarrow 0\\
  \Vert&&\downarrow&&\downarrow\\
{\mathbf H}^\bullet_{C,Y}&\rightarrow& \E_{|Y\times T}&\rightarrow&\pi_{Y}^*\pi_*(\Q\otimes\nu^*N_C^\on)&\rightarrow 0
\end{matrix}
\]
  where the class of the bottow row is $b$. By definition, the class of the top row is $f(b)$, and it is equal to $g(e)$. This proves Lemma \ref{key1}.\qed

\subsection{Conclusion of the Proof of Theorem \ref{gaussian}}   The last step is 

\begin{lemma}\label{lemma2} We keep the notation and setting of Lemma \ref{key1}. Assume that the line bundle $L$ on $X$ is sufficiently positive. If $f(b)=0$ then $b=0$. 
\end{lemma}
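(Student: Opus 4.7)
The idea is to reduce the lemma via Lemma \ref{claim2} to a statement about an extension class in degree $1$, and then exploit Serre vanishing for $L$ sufficiently positive.

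By Lemma \ref{claim2}, the class $b$ can be written as the Yoneda product $b = \kappa \cdot c$, where $\kappa \in \Ext^{n-1}_{Y\times T}(\Q \otimes \nu^*N_C^{\otimes n-1}, \Q_{|Y\times T})$ is the class of the Koszul complex $\mathbf{H}_{C,Y}^\bullet$ and $c \in \Ext^1_{Y\times T}(\pi_Y^*\pi_*(\Q \otimes \nu^*N_C^{\otimes n}), \Q \otimes \nu^*N_C^{\otimes n-1})$ is the class of the right-hand short exact sequence appearing in (\ref{sequenceY2}). Correspondingly $f(b) = \kappa \cdot f(c)$. Applying $\R\Hom_{Y\times T}(-,-)$ term-by-term through the Koszul resolution $\mathbf{H}_{C,Y}^\bullet$, for $L$ sufficiently positive Serre vanishing annihilates the contributions from the intermediate locally free terms $\Q \otimes \nu^*L_{|Y}^{\otimes k}$ ($k = 1, \ldots, n-1$), and one checks that Yoneda multiplication by $\kappa$ induces an isomorphism $\Ext^1_{Y\times T}(M, \Q \otimes \nu^*N_C^{\otimes n-1}) \xrightarrow{\sim} \Ext^n_{Y\times T}(M, \Q_{|Y\times T})$ both for $M = \pi_Y^*\pi_*(\Q \otimes \nu^*N_C^{\otimes n})$ and for $M = \F^Y$, compatibly with the pullback along $\F^Y \hookrightarrow \pi_Y^*\pi_*(\Q \otimes \nu^*N_C^{\otimes n})$; this is the relative analog of (\ref{koszul}). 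Hence it suffices to prove: for $L$ sufficiently positive, $f(c) = 0$ implies $c = 0$.

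Now, $f(c) = 0$ is equivalent, by the long exact $\Ext^\bullet$-sequence associated to $0 \to \F^Y \to \pi_Y^*\pi_*(\Q \otimes \nu^*N_C^{\otimes n}) \to \Q \otimes \nu^*N_C^{\otimes n} \to 0$, to $c$ lying in the image of the pullback
\[ h_c : \Ext^1_{Y\times T}(\Q \otimes \nu^*N_C^{\otimes n}, \Q \otimes \nu^*N_C^{\otimes n-1}) \to \Ext^1_{Y\times T}(\pi_Y^*\pi_*(\Q \otimes \nu^*N_C^{\otimes n}), \Q \otimes \nu^*N_C^{\otimes n-1}). \]
On the other hand, $c$ has a very specific origin: by Lemma \ref{claim2} it is the image, under the restriction $X\times T \to Y\times T$ and the splitting $(\I_{C/X})_{|Y} = \I_{C/Y}\oplus N_C^{-1}$ of the conormal sheaf, of the class $\tilde c \in \Ext^1_{X\times T}(\pi^*\pi_*(\Q \otimes \nu^*N_C^{\otimes n}), \I_{C/X}\otimes\Q\otimes\nu^*L^{\otimes n})$ of the top row of (\ref{sequence}). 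Classes in $\mathrm{Im}\,h_c$, by contrast, factor through the evaluation $\pi_Y^*\pi_*(\Q \otimes \nu^*N_C^{\otimes n}) \to \Q \otimes \nu^*N_C^{\otimes n}$, so they are determined by data supported on the curve $C\times T$. For $L$ sufficiently positive, Serre vanishing on $X\times T$ ($R^i\pi_*(\Q \otimes \nu^*L^{\otimes k}) = 0$ for $i > 0$, $k \geq 1$) together with the corresponding vanishings after restriction to $Y\times T$ permits a diagram chase which tracks how $\tilde c$ decomposes and restricts: the $N_C^{-1}$-summand of $\tilde c$, whose projection is $c$, cannot lie in $\mathrm{Im}\,h_c$ unless it vanishes, forcing $c = 0$.

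The hardest point will be this last disjointness: the subspace of ``restrictions from $X\times T$'' and the subspace $\mathrm{Im}\,h_c$ in $\Ext^1_{Y\times T}$ are defined by structurally different means, and proving that they intersect trivially for $L$ sufficiently positive requires interweaving the Koszul resolutions of $\OO_C$ as an $\OO_X$- and as an $\OO_Y$-module. The role of Serre vanishing is to provide just enough room in the relevant $\Ext$ groups for this diagram chase to force any coincidence between the two subspaces to produce $c = 0$.
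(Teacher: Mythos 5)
Your proposal stalls exactly where you say it does, and the missing step is not a routine diagram chase: it is the entire content of the lemma. The paper's proof does not establish any ``disjointness'' between $\mathrm{Im}\,h$ and the classes restricted from $X\times T$; such a disjointness is not obviously true and is never what is used. What is used is a specific, quantitative consequence of the construction of $b$ that your write-up never extracts: the coboundary map $\delta_b$ factors as $\beta\circ\alpha$ through the Koszul coboundary $\alpha:\pi_*(\Q\otimes\nu^*N_C^\on)\rightarrow R^n\pi_*\Q$ on $X\times T$ (see (\ref{composition})), and therefore $b$ dies under the composite $\Ext^n_{Y\times T}(\pi_Y^*\pi_*(\Q\otimes\nu^*N_C^\on),\Q_{|Y\times T})\rightarrow \Hom(\pi_*(\Q\otimes\nu^*L^\on),R^n\pi_*\Q_{|Y\times T})$ of diagram (\ref{aux}), because $\alpha$ is a connecting map and so vanishes after precomposition with $\pi_*(\Q\otimes\nu^*L^\on)\rightarrow\pi_*(\Q\otimes\nu^*N_C^\on)$. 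Combined with $b=h(c)$, this gives $(\mu^\prime\circ h^\prime\circ r)(c)=0$, and the lemma then reduces to two concrete injectivity statements: injectivity of $r$ (a Koszul chase plus K\"unneth and Serre vanishing) and injectivity of $\mu^\prime\circ h^\prime$, which after base change to a general fibre is Serre-dual to the surjectivity of the multiplication map $H^0(X,\Q_t\otimes L^\on)\otimes H^0(Y,(\omega_X\otimes\Q_t^{-1}\otimes L)_{|Y})\rightarrow H^0(Y,(\omega_X\otimes L^{\on+1})_{|Y})$ for $L$ sufficiently positive. None of this appears in your argument; your heuristic that classes in $\mathrm{Im}\,h_c$ are ``determined by data supported on $C\times T$'' does not produce a usable constraint, since extensions between sheaves supported on $C\times T$ computed on $Y\times T$ carry contributions from the normal directions.

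A secondary but real problem is your preliminary reduction to degree $1$. You need Yoneda multiplication by $\kappa$ to be an isomorphism $\Ext^1_{Y\times T}(M,\Q\otimes\nu^*N_C^{\otimes n-1})\rightarrow\Ext^n_{Y\times T}(M,\Q_{|Y\times T})$ for $M=\F^Y$ and for $M=\pi_Y^*\pi_*(\Q\otimes\nu^*N_C^\on)$. The required vanishings $\Ext^j_{Y\times T}(M,\Q\otimes\nu^*L_{|Y}^{\otimes k})=0$ are not direct applications of Serre vanishing, because both candidates for $M$ (the relative Lazarsfeld sheaf, which is never locally free here, and the pullback of the pushforward) themselves depend on $L$; making $L$ more positive changes $M$. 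The paper avoids this issue entirely by working in degree $n$ throughout and only ever needing the two injectivities above.
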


Assuming this, Theorem \ref{gaussian} follows: if $g(e)=0$ then, by Lemmas \ref{key1} and \ref{lemma2} it follows that   $b=0$, hence its coboundary map $\delta_b=\beta\circ \alpha$ is zero (see (\ref{composition})). Taking $L$ sufficiently positive,   
Serre vanishing yields that $\alpha$ is surjective and $\beta$ is injective. Therefore the target of  $\delta_b$,  namely $R^n\pi_*\Q$, is zero.\qed

\begin{proof} (of Lemma \ref{lemma2}) \ The proof is a somewhat tedious repeated application of Serre vanishing. Going back to diagram (\ref{A}) we have that if $f(b)=0$ then there is a 
\ \ $c\in \Ext^n_{Y\times T}(\Q\otimes\nu^*N^\on_{|C},\Q_{|Y\times T})$ \ \   such that 
\begin{equation}\label{h(c)}
h(c)=b \>\>.
\end{equation}
Now we consider  the  commutative diagram
\begin{equation}\label{aux} \xymatrixrowsep{1.5pc}\xymatrixcolsep{1pc}\xymatrix{\Ext^n_{Y\times T}(\Q\otimes \nu^*N_C^\on,\Q_{|Y\times T})\ar[r]^-r\ar[d]^h&\Ext^n_{X\times T}(\Q\otimes \nu^*L^\on,\Q_{|Y\times T})\ar[d]^{h^\prime} \\
\Ext^n_{Y\times T}(\pi_{Y}^*\pi_*(\Q\otimes \nu^*N_C^\on),\Q_{|Y\times T})\ar[r]^-s\ar[d]^\mu
& \Ext^n_{X\times T}(\pi^*\pi_*(\Q\otimes\nu^*L^\on),\Q_{|Y\times T})\ar[d]^{\mu^\prime} \\
 \mathrm{Hom}_{}({\pi}_*(\Q\otimes\nu^*N_C^\on),R^n{\pi}_*\Q_{|Y\times T})\ar[r]^t&\mathrm{Hom}_{}(\pi_*(\Q\otimes\nu^*L^\on),R^n{\pi}_*\Q_{|Y\times T})}
\end{equation}
where:\\
(a) $h$ is  as above and $h^\prime$ is the analogous map \ $\Ext^n_X(ev_X,\Q_{|Y\times T})$, where $ev_X$ is the relative evaluation map on $X\times T$: \ $\pi^*\pi_*(\Q\otimes\nu^*L^\on)\rightarrow \Q\otimes\nu^*L^\on\>\>.$\\
(b) $\mu$ is  the map taking an extension to its coboundary map.  Consequently 
the map \ $\mu\circ h$ \  takes  an extension class $e\in \Ext^n_{Y\times T}(\Q\otimes\nu^*N_C^\on)\,,\Q_{|Y\times T})$ 
to its coboundary map 
\[{\pi}_*(\Q\otimes\nu^*N_C^\on)\rightarrow R^n{\pi}_*(\Q_{|Y\times T})\]
 The map $\mu^\prime\circ h^\prime$ operates in the same way;\\
(c)  notice that the target of $r$ is simply $H^n(\nu^*L^{\otimes -n}_{|Y\times T})$, i.e. $\Ext^n_{Y\times T}(\nu^*L^\on_{|Y},\OO_{Y\times T})$. Via this identification the map $r$ is defined as the natural map 
\[\Ext^n_{Y\times T}(\nu^*L^\on_{|C},\OO_{Y\times T})\rightarrow \Ext^n_{Y\times T}(\nu^*L^\on_{|Y},\OO_{Y\times T})\]
(d) $s$ and $t$ are the natural maps.

 We know that the coboundary map of the extension class $b$ factorizes through the natural coboundary map $\alpha:\pi_*(\Q\otimes\nu^*N_C^\on)\rightarrow R^n\pi_*(\Q)$. This  implies  that  $(t\circ\mu) (b)=0$.
  Therefore, by (\ref{h(c)}) and (\ref{aux}), we have that $(\mu^\prime\circ h^\prime\circ r) (c)=0$. The Lemma will follow  from
 the fact  that both  $r$ and $\mu^\prime\circ h^\prime$ are injective: 
 
 \noindent \emph{Injectivity of  $r$: } in the case $n=1$, i.e. $Y=C$, the map $r$ is just the identity (compare (c) above). Assume that $n>1$. Chasing in the Koszul resolution of $\OO_C$ as $\OO_Y$-module  one finds that the injectivity of $r$ holds as soon as $\Ext^{n-i}_{Y\times T}(\nu^*L^{\on-i}_{|Y},\OO_{Y\times T})=0$ for $i=1,\dots ,n-1$. But these are simply $H^{n-i}(Y\times T,L^{\otimes i-n}_{|Y}\boxtimes\OO_T)$ and the result follows easily from K\"unneth decomposition, Serre vanishing and Serre duality.

 \noindent \emph{Injectivity of $\mu^\prime\circ h^\prime$: } We have that $\Ext^n_{X\times T}(\Q\otimes\nu^*L^\on,\Q_{|Y\times T})\cong $\break $\cong H^n(Y\times T,L^{-n}_{|Y}\boxtimes\OO_T)$.  If $L$ is a sufficiently positive, it follows  as above that this is isomorphic to $H^n(Y,L^{-n}_{|Y})\otimes H^0(T,\OO_T)$. Therefore the map
 $\mu^\prime\circ h^\prime$ is identified to the $H^0$ of the following map of $\OO_T$-modules
 \begin{equation}\label{T}H^n(Y,L^{-n}_{|Y})\otimes \OO_T\rightarrow \mathcal{H}om_T(\pi_*(\Q\otimes\nu^*L^\on),R^n{\pi}_*\Q_{|Y\times T})
 \end{equation}
Hence the injectivity
 of $\mu^\prime\circ h^\prime$ holds as soon as (\ref{T}) is injective at a general fiber.  
 For a closed point $t\in T$, let $\Q_t=\Q_{|X\times\{t\}}$. By base change the map (\ref{T}) at  a general fiber $X\times \{t\}$ is
 \begin{equation}\label{comult}
H^n(Y,  L^{\otimes - n}_{|Y})\rightarrow H^0(X,Q_t\otimes L^\on)^\vee\otimes H^n(Y, {\Q_t}_{|Y}).
 \end{equation}
which  is the  the Serre-dual of the multiplication map of global sections
\begin{equation}\label{mult}
H^0(X,\Q_t \otimes L^\on)\otimes H^0(Y,(\omega_{X}\otimes \Q_t^{ -1}\otimes L)_{|Y})\rightarrow H^0(Y, (\omega_{X}\otimes L^{\on +1})_{|Y}).
\end{equation}
At this point a standard argument with Serre vanishing shows  that (\ref{mult}) is surjective as soon as $L$ is sufficiently positive\footnote{in brief, one shows that the desired surjectivity follows from the surjectivity of
$H^0(X,\Q_t \otimes L^\on)\otimes H^0(X,\omega_{X}\otimes \Q_t^{ -1}\otimes L)\rightarrow H^0(X, \omega_{X}\otimes L^{\on +1}).$ This in turn is proved by interpreting such a multiplication map as the $H^0$ of a restriction-to-diagonal map of $\OO_{X\times X}$-modules}. This proves the injectivity of $\mu^\prime\circ h^\prime$ and concludes also the proof  of the Lemma.
\end{proof}

\section{Proof of Corollary \ref{B}}\label{section3}
The deduction
of Corollary \ref{B} from Theorem \ref{gaussian} is a standard argument with Serre vanishing. However, there are some complications due to the weakness of the assumptions on the singularities of the variety $X$.

\noindent\textbf{A gaussian map on the ambient variety $\mathbf{X\times T}$. } The argument makes use of  a (dual) gaussian map defined on the ambient variety $X\times T$ itself. Namely, for a line bundle $A$ on $X$ we define $\MM^X_{A,\Q}$ as the kernel of the relative evaluation map 
\[\pi^*\pi_*(\Q\otimes \nu^*A)\rightarrow \Q\otimes \nu^*A\]
As in (\ref{diff}) and Subsection \ref{preliminaries} there is the isomorphism
\begin{equation}\label{basechangeX}\MM^X_{A,\Q}\cong { p_X}_*(\I_{\widetilde\Delta_X}\otimes q_X^*(\Q\otimes \nu^*A))
\end{equation}
(where $ p_X$,   $q_X$ and $\tilde \Delta_X$ denote the projections and the diagonal of $(X\times Y)\times_T (X\times T)$). There is also the differentiation map $\MM^X_{A,\Q}\rightarrow\Q\otimes  \nu^*(\Omega^1_X\otimes A)$. 

Now, taking as $A=L^{\otimes n}$ and taking $\Ext^{n+1}_{X\times T}(\>\cdot\>, \Q\otimes \nu^* L^\vee)$ we get the  desired dual gaussian map on $X$:
$$g_X:\Ext^{n+1}_X(\Omega^1_X\otimes L^{\otimes n+1},\OO_X)\rightarrow \Ext^{n+1}_{X\times T}(\MM_{L^\on,\Q}\, ,Q\otimes\nu^*L^\vee) \>\>.$$
Note that there are natural maps $\MM_{L^\on,\Q}\rightarrow \F^X\rightarrow \F^Y$ (see Notation \ref{abbreviation}).

\noindent\textbf{First step. } We consider the commutative diagram
\begin{equation}\label{mappegaussiane}\xymatrixcolsep{1.5pc}\xymatrixrowsep{1.5pc}\xymatrix{\Ext^n_Y(\Omega^1_{Y}\otimes N_C^\on,\OO_Y)\ar[r]^-g\ar[d]^-\mu&\Ext^n_{Y\times T}(\F^Y_{N_C^\on,\Q_{|Y\times T}},\Q_{|Y\times T})\ar[d]^-\eta\\
\Ext^{n+1}_X(\Omega^1_{X}\otimes L^{\on+1},\OO_X)\ar[r]^-{g_X}& \mathrm{Ext}^{n+1}_{X\times T}(\MM^X_{L^{\on},\Q},  \Q\otimes\nu^*L^{\vee})\\
}
\end{equation}
where, as in the previous section, $g$ denotes the main character, namely the (dual) gaussian map $g_{N_C^\on,\Q_{Y\times T}}$. The maps $\mu$ and $\eta$ are the natural ones, and the definition is left to the reader \footnote{for example, $\eta$ is defined by  sending $\Ext^n_Y(\Omega^1_{Y}\otimes N_C^\on,\OO_Y)$ to $\Ext^n_X(\Omega^1_{Y}\otimes N_C^\on,\OO_Y)$ and then composing   with the natural map $\Omega^1_X\otimes L^\on\rightarrow \Omega^1_Y\otimes N_C^\on$ on the left, and with the natural extension $0\rightarrow L^\vee\rightarrow \OO_X\rightarrow \OO_Y\rightarrow 0$ on the right}. However such maps are more easily understood by considering the following commutative diagram, whose maps are the natural ones
\begin{equation}\label{duali}
\xymatrixcolsep{1pc}\xymatrixrowsep{1.5pc}\xymatrix{H^d(\MM^X_{L^{\on},\Q}\otimes\! \Q^\vee\!\otimes\! ((\omega_X\otimes L)\boxtimes \omega_T)))
\ar[r]^-{g_X^\prime}\ar[d]^{\eta^\prime}&
H^0(\Omega^1_X\otimes L^{\on+1}\otimes\omega_X)\!\otimes\! H^d(\omega_T)\ar[d]^{\mu^\prime}\\
H^d(\F^Y_{N_C^\on,\Q}\otimes\! \Q^\vee\!\otimes\!(\omega_Y\boxtimes\omega_T))\ar[r]^-{g^\prime}&H^0(\Omega^1_Y\otimes L^\on\otimes \omega_Y)\!\otimes\! H^d(\omega_T)}
\end{equation}
where $d=\dim T$. 
 (Note that,
since $X$ is Cohen-Macaulay, adjunction formulas for dualizing sheaves do hold). Notice also that, if $X$ and $T$ are  Gorenstein, then (\ref{duali}) is the dual of diagram (\ref{mappegaussiane}). 

\smallskip
As it is easy to see, after tensoring with $\omega_C\otimes N_C$ the restricted normal sequence (\ref{restricted normal}) remains exact:
  \begin{equation}
  \label{stillexact}
  0\rightarrow \omega_C\rightarrow \Omega^1_X\otimes L\otimes \omega_C\rightarrow \Omega^1_Y\otimes N_C\otimes \omega_C\rightarrow 0
  \end{equation}
  Therefore $e$ defines naturally a linear functional on $H^0(\Omega^1_Y\otimes N_C\otimes \omega_C)$ (compare also (\ref{nonGoreII}) below), still denoted by $e$. We have
 \begin{claim}\label{claim}
  If $L$ is sufficiently positive, then the map $g_X^\prime$ is surjective, while $coker \mu^\prime$ is one-dimensional, with $(coker \mu^\prime)^\vee$  spanned by $e$.
  \end{claim}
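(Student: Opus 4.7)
The claim has two parts: surjectivity of $g_X'$ and the identification of $\operatorname{coker}\mu'$ as a line with dual spanned by $e$. My plan is to treat the cokernel statement first, since it admits a very direct geometric interpretation via the twisted restricted normal sequence (\ref{stillexact}), and then to deduce surjectivity of $g_X'$ from a standard Serre-vanishing argument modeled on the one used for the multiplication map (\ref{mult}) in the proof of Lemma \ref{lemma2}.

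For the cokernel of $\mu'$: since $T$ is Cohen-Macaulay and irreducible, Serre duality gives $H^d(\omega_T)\cong k$, so it suffices to analyze the map of $H^0$'s on the $X$-factor. Using the adjunction formulas $\omega_Y\cong (\omega_X\otimes L)_{|Y}$ and $\omega_C\cong \omega_X|_C\otimes N_C^\on\cong \omega_Y|_C\otimes N_C^{\otimes n-1}$ coming from the complete intersection $C=Y\cap Y_2\cap\cdots\cap Y_n$, one obtains identifications
\[(\Omega^1_X\otimes L^{\on+1}\otimes\omega_X)|_C\;\cong\;\Omega^1_X|_C\otimes N_C\otimes\omega_C,\quad \Omega^1_Y\otimes N_C^\on\otimes\omega_Y\;\cong\;\Omega^1_Y|_C\otimes N_C\otimes\omega_C.\]
This identifies $\mu'$ with the composition of the restriction map
\[H^0(X,\Omega^1_X\otimes L^{\on+1}\otimes\omega_X)\longrightarrow H^0(C,\Omega^1_X|_C\otimes N_C\otimes\omega_C)\]
with the $H^0$ of the surjection $\Omega^1_X|_C\otimes N_C\otimes\omega_C\twoheadrightarrow \Omega^1_Y|_C\otimes N_C\otimes\omega_C$ coming from (\ref{stillexact}). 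The first map would be shown surjective for $L$ sufficiently positive by tensoring the Koszul resolution of $\OO_C$ as an $\OO_X$-module with $\Omega^1_X\otimes L^{\on+1}\otimes\omega_X$ and invoking Serre vanishing. For the cokernel of the second map I would read off the long cohomology sequence of (\ref{stillexact}): the cokernel embeds in $H^1(C,\omega_C)\cong k$ through the connecting homomorphism $\partial$, and Serre duality on $C$ identifies $\partial$ with the linear functional defined by the extension class $e$. Since $e\neq 0$ whenever $L$ is sufficiently positive (as recorded right after Theorem \ref{intro}), the cokernel is one-dimensional with its dual spanned by $e$.

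For surjectivity of $g_X'$: using the base-change formula (\ref{basechangeX}), $\MM^X_{L^{\on},\Q}\cong {p_X}_*(\I_{\widetilde\Delta_X}\otimes q_X^*(\Q\otimes\nu^*L^\on))$, and the differentiation map of $\MM^X_{L^{\on},\Q}$ is ${p_X}_*$ of the restriction of $q_X^*(\Q\otimes\nu^*L^\on)$ to the relative diagonal $\widetilde\Delta_X$. Applying Serre duality on $X\times T$ and K\"unneth (both valid since $X$ and $T$ are Cohen-Macaulay), the map $g_X'$ identifies with the $H^0$ of a restriction-to-diagonal morphism on $(X\times T)\times_T(X\times T)$. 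Its surjectivity for $L$ sufficiently positive then follows by a standard Serre vanishing / Castelnuovo-Mumford regularity argument on $X$, exactly as sketched in the footnote after the proof of Lemma \ref{lemma2} for the analogous multiplication map (\ref{mult}).

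The main obstacle will be bookkeeping rather than new mathematics: since $X$ is only assumed Cohen-Macaulay (not Gorenstein), diagram (\ref{duali}) is literally the Serre dual of (\ref{mappegaussiane}) only in the Gorenstein case, and one must carefully verify that the sources and targets of $g_X'$ and $\mu'$ are correctly identified via the dualizing sheaves $\omega_X$, $\omega_Y$, $\omega_C$ and $\omega_T$, and that the various Serre vanishings invoked above remain valid in this generality. Once these identifications are in place, the essential content of the claim is the Koszul / long-exact-sequence computation in the second paragraph.
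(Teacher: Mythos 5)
Your treatment of $\operatorname{coker}\mu'$ is exactly the paper's: factor $\mu'$ as the restriction $H^0(X,\Omega^1_X\otimes L^{\on+1}\otimes\omega_X)\to H^0(C,\Omega^1_X|_C\otimes L\otimes\omega_C)$ (surjective by Serre vanishing via the Koszul resolution) followed by $H^0$ of the right arrow of (\ref{stillexact}), whose cokernel is controlled by $H^1(\omega_C)\cong k$ and cut out by the functional $e$. The one point to fix is in your sketch for $g_X'$: its source in diagram (\ref{duali}) is an $H^d$, not an $H^0$, and it cannot be identified with the $H^0$ of a restriction-to-diagonal morphism when $d=\dim T>0$. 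What the paper actually does is use (\ref{locallyfree}), Leray and K\"unneth to rewrite $g_X'$ as the $H^d$ of a map of coherent sheaves \emph{on the $d$-dimensional variety $T$}; surjectivity of $H^d$ of such a map follows from \emph{generic} surjectivity of the sheaf map (the cokernel is then supported in dimension $<d$, so its $H^d$ vanishes), and only at a general fibre $X\times\{t\}$ does one land on the honest restriction-to-diagonal gaussian map $\gamma_t$, which is surjective by relative Serre vanishing. This fibrewise reduction is the one step where the relative setting genuinely intervenes, and it is missing from your outline; once inserted, your argument coincides with the paper's. Your closing caveat about the non-Gorenstein bookkeeping is well placed — the paper isolates it as a separate claim (Claim \ref{additional}) rather than folding it into this one.
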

  \begin{proof}  Serre vanishing ensures the surjectivity of the restriction 
  \[H^0(\Omega^1_X\otimes L^{\on+1}\otimes\omega_X)\rightarrow H^0( \Omega^1_X\otimes L\otimes \omega_C)\]
  Since the map $\mu^\prime$ is the composition of the above map with $H^0$ of the right arrow of sequence (\ref{stillexact}), the Claim for $\mu^\prime$  follows.

  Concerning the surjectivity of the map $g_X^\prime$, we first note that
by Serre vanishing,
\begin{equation}\label{locallyfree}
R^ip_*(\I_{\widetilde\Delta_{X}}\otimes q_X^*(\nu^*L^\on\otimes\Q))=\begin{cases}0 & \hbox{for $i>0$}\\
\hbox{locally free}&\hbox{for $i=0$}\end{cases}
\end{equation}
Now we project on $T$. A standard computation using (\ref{locallyfree}), base-change, Serre vanishing, Leray spectral sequence and K\"unneth decomposition shows that the  map $g_X^\prime$ is identified to
\begin{multline}\label{hq}H^d\bigl(T,\pi_*\bigl( p_*\bigl(\I_{\widetilde\Delta_X}\otimes q^*(\nu^*L^\on\otimes\Q)\bigr)\otimes \Q^\vee\otimes ((L \otimes\omega_X)\boxtimes \omega_T)\bigr)\bigr)\rightarrow\\
\rightarrow H^d( T,H^0(X,\Omega^1_{X}\otimes L^{\on +1}\otimes\omega_X)\otimes\omega_T)
\end{multline}
This the $H^d$ of a map of coherent sheaves on the $d$-dimensional variety $T$. Hence the surjectivity of (\ref{hq})
is implied by the generic surjectivity of the map of sheaves itself. By base change, at a generic fibre $X\times{t}$ such  map of sheaves is the gaussian map
\[\gamma_t: H^0(X, p_*(\I_{\Delta_{X}}\otimes q^*(L^\on\otimes\Q_t))\otimes L\otimes\Q_t^{\vee}\otimes\omega_{X})\rightarrow H^0(X, \Omega^1_{X}\otimes L^{\on +1}\otimes \omega_X)
\]
The map $\gamma_t$ is defined by restriction to the diagonal in the usual way. Once again it follows from relative Serre vanishing (on $(X\times T)\times_T (X\times T)$) that, as soon as $L$ is sufficiently positive,  $\gamma_t$ is surjective for all $t$. This proves the surjectivity of the $g_X^\prime$ and concludes the proof of the Claim.
\end{proof}

\noindent\textbf{Last step. } If $C$ is Gorenstein, Claim \ref{claim} achieves the proof of  Corollary \ref{B}. Indeed, diagram  (\ref{mappegaussiane}) is dual to diagram (\ref{duali}) and it follows that the kernel of our map $g=g^Y_{N_C^\on,\Q}$ is  at most one-dimensional, spanned by $e$. In the general case,  Corollary \ref{B}  follows in the same way once proved the following
\begin{claim}\label{additional} As soon as $L$ is sufficiently positive, the maps $g_X^\prime$ and $\mu^\prime$ are respectively Serre-dual of the maps $g_X$ and $\mu$.
\end{claim}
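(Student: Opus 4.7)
The plan is to derive both duality statements from Grothendieck--Serre duality on projective Cohen--Macaulay schemes, the main complication being that since $X$ and $Y$ are only Cohen--Macaulay (not Gorenstein), the dualizing sheaves $\omega_X,\omega_Y$ need not be invertible, so the identification of $\Ext^\bullet_V(\F,\OO_V)$ with $H^{\dim V-\bullet}(V,\F\otimes\omega_V)^\vee$ is not automatic. Positivity of $L$ will enter, via Serre vanishing, precisely to make this identification go through.

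The main step is to show that, for each of the four sheaves $\F$ appearing in (\ref{mappegaussiane}) --- namely $\Omega^1_Y\otimes N_C^\on$ on $Y$, $\Omega^1_X\otimes L^{\on+1}$ on $X$, $\F^Y$ on $Y\times T$, and $\MM^X_{L^\on,\Q}$ on $X\times T$ --- the natural Yoneda pairing
\begin{equation*}
\Ext^i_V(\F,\OO_V)\times H^{\dim V-i}(V,\F\otimes\omega_V)\longrightarrow H^{\dim V}(V,\omega_V)\cong k
\end{equation*}
is perfect in the degree of interest (where $V$ carries its own dualizing sheaf, and a product $V\times T$ carries $\omega_V\boxtimes\omega_T$). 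Via the local-to-global $\Ext$ spectral sequence this reduces to suitable vanishings of $H^p(V,\EExt^q(\F,\OO_V))$, which are routine applications of Serre vanishing after $L$ is taken sufficiently positive. This is the main technical obstacle: the non-Gorenstein singularities force one to control local $\Ext$-sheaves rather than relying on Serre duality in its clean locally-free form.

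Once these pairings are available, the identification $\mu^\prime=\mu^\vee$ follows by functoriality. One factors $\mu$ as (a) the adjunction map $\Ext^n_Y(\Omega^1_Y\otimes N_C^\on,\OO_Y)\to\Ext^n_X(\Omega^1_Y\otimes N_C^\on,\OO_Y)$ for the closed immersion $Y\hookrightarrow X$, (b) precomposition with the natural surjection $\Omega^1_X\otimes L^\on\to\Omega^1_Y\otimes N_C^\on$, and (c) the Yoneda coboundary for $0\to L^\vee\to\OO_X\to\OO_Y\to 0$. Serre-dualizing each factor and invoking the adjunction isomorphism $\omega_Y\cong (\omega_X\otimes L)|_Y$, valid for a Cartier divisor on a CM scheme, produces exactly the composition defining $\mu^\prime$.

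For $g_X^\prime=g_X^\vee$, by (\ref{basechangeX}) the map $g_X$ is $\Ext^{n+1}_{X\times T}(-,\Q\otimes\nu^*L^\vee)$ of the restriction-to-$\widetilde{\Delta}_X$ map on $(X\times T)\times_T(X\times T)$, while $g_X^\prime$ is $H^d$ of the analogous restriction-to-diagonal map after twisting by $\omega_X\boxtimes \omega_T$. Grothendieck--Serre duality on the CM scheme $X\times T$, combined with base change, K\"unneth, and the same Serre vanishings already used in (\ref{locallyfree}) and in the reduction to the generic fibre map $\gamma_t$, identifies the two maps up to the pairings of the main step, completing the plan.
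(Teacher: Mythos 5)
You have correctly located the source of the difficulty (non-Gorenstein singularities, so $\omega_X$, $\omega_Y$ are not invertible and $\Omega^1_X$ is not locally free), but the load-bearing step of your plan is the wrong reduction, and this is a genuine gap. Perfectness of the pairing between $\Ext^i_V(\F,\OO_V)$ and $H^{\dim V-i}(V,\F\otimes\omega_V)$ does \emph{not} reduce, via the local-to-global spectral sequence, to vanishings of $H^p(V,\EExt^q(\F,\OO_V))$. What duality gives unconditionally on a projective CM scheme is $\Ext^i_V(\F,\OO_V)\cong \Ext^i_V(\F\underline\otimes^{\mathbf{L}}\omega_V,\omega_V)$, using $\OO_V\cong\R\mathcal{H}om(\omega_V,\omega_V)$, and the latter is dual to the hypercohomology of $\F\underline\otimes^{\mathbf{L}}\omega_V$ in the complementary degree. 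The entire issue is therefore the discrepancy between the derived tensor product $\F\underline\otimes^{\mathbf{L}}\omega_V$ and the ordinary one $\F\otimes\omega_V$: the local invariants one must kill are the higher cohomology groups of the Tor sheaves $tor_q(\F,\omega_V)$ (suitably twisted), not of the local Ext sheaves $\EExt^q(\F,\OO_V)$ --- these are different sheaves and neither controls the other, so your proposed vanishings, even if established, would not yield the perfect pairing. This is precisely what the paper's proof of (\ref{nonGore}) does: Serre vanishing leaves only the $H^0$'s of $tor_i^X(\Omega^1_X,\omega_X)\otimes L^{\on+1}$, so the relevant spectral sequence degenerates in the degree of interest.

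Even after correcting the reduction, ``routine Serre vanishing'' undersells the case of $\mu^\prime$. The needed statement (\ref{nonGoreII}) lives on the fixed curve $C$: one must show that $tor^i_C((\Omega^1_Y)_{|C},\omega_C)$ has no higher cohomology after twisting by $N_C$, which for $i>0$ uses that these sheaves are supported on points, and for $i=0$ requires $H^1((\Omega^1_X)_{|C}\otimes N_C\otimes\omega_C)=0$. The latter is not a direct instance of Serre vanishing on $X$ or $Y$; it is proved by restricting the Koszul resolution of $\OO_C$ and chasing, and it uses the hypothesis that $C$ is not contained in the singular locus of $X$. Finally, note that the paper sidesteps your ``main step'' entirely for the fourth space: $\MM^X_{L^\on,\Q}$ is locally free by (\ref{locallyfree}), so ordinary Serre duality applies there with no local analysis needed.
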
 
\begin{proof} To prove this assertion for $g_X^\prime$ we note that, concerning its source, the sheaf $\MM^X_{L^{\on},\Q}$ is locally free by (\ref{locallyfree}). Therefore
\begin{gather}\label{nuius}\mathrm{Ext}^{n+1}_{X\times T}(\MM^X_{L^{\on},\Q},  \Q\otimes\nu^*L^{\vee})
\cong H^{n+1}(
(\MM^X_{L^{\on},\Q})^{\vee}\otimes \Q\otimes\nu^*L^{\vee})\cong \\
\cong H^{d}(\MM^X_{L^{\on},\Q}\otimes
\Q^\vee\otimes((L\otimes \omega_X)\boxtimes\omega_T))^\vee\end{gather}
Next, we show the Serre duality
\begin{equation}\label{nonGore}H^0(X,\Omega^1_X\otimes\omega_X\otimes L^{\on+1})^\vee \cong\Ext^{n+1}_X({\Omega^1_X}\otimes L^\on,L^{\vee})
\end{equation}
By definition of dualizing complex (see e.g. \cite{rd}, Ch.V,\S2, Prop. 2.1 at p. 258), in the derived category of $X$ we have that $\OO_X=
 \R\mathcal{H}om(\omega_X,\omega_X)$. Therefore it follows that
\begin{gather*}\R\mathrm{Hom}_X(\Omega^1_X\otimes L^{\on+1},\OO_X)=\R\mathrm{Hom}_X(\Omega^1_X\otimes L^{\on+1},\R\mathcal{H}om(\omega_X,\omega_X))=\\
=\R\mathrm{Hom}_X(\Omega^1_X\underline\otimes^{\mathbf{L}}\omega_X\otimes L^{\on+1},\omega_X)
\end{gather*}
By Serre-Grothendieck duality, this is isomorphic to
\[\R\mathrm{Hom}_k(
 \R\Gamma(X,\Omega^1_X\underline\otimes^{\mathbf{L}}\omega_X\otimes L^{\on+1}[n+1]),k)
\]
The spectral sequence computing $ \R\Gamma(X,(\Omega^1_X\underline\otimes^{\mathbf{L}}\omega_X\otimes L^{\on+1})$ degenerates  
to the isomorphisms
\[H^i(X,\Omega^1_X\underline\otimes^{\mathbf{L}}\omega_X\otimes L^{\on+1})\cong \bigoplus_i H^0(X,tor_i^X(\Omega^1_X,\omega_X)\otimes L^{\on+1})\]
(if $L$ is sufficiently positive, by Serre vanishing there are only $H^0$'s). 
 Therefore (\ref{nonGore}) follows. By (\ref{nuius}) and (\ref{nonGore}) we have proved the part of  the Claim concerning $g_X^\prime$. 
 
 Concerning $\mu^\prime$, at this point it is enough to prove the Serre duality
 \begin{equation}\label{nonGoreII}
 \Ext^n_Y({\Omega^1_X}_{|C}\otimes L^n,\OO_Y)\cong \Ext^1_C({\Omega^1_Y}_{|C}\otimes L,\OO_C)\cong H^0(\Omega^1_C\otimes N\otimes\omega_C)^\vee
 \end{equation}
 where the first isomorphism is (\ref{koszul}). Arguing as above, it is enough to prove that the $\OO_C$-modules
 \[tor^i_C((\Omega^1_Y)_{|C},\omega_C)\otimes N_C\]
 have vanishing higher cohomology for all $i$. For $i>0$ this follows simply because they are supported on points. For $i=0$ note that, by the exact sequence (\ref{stillexact}), it is enough to show that 
 \begin{equation}\label{required}
 H^1(\Omega_X^1\otimes N_C\otimes\omega_C)=0
 \end{equation} 
 To prove this, we tensor the Koszul resolution of $\OO_C$ as $\OO_X$-module with $\Omega^1_X\otimes \omega_X\otimes L^{\on +1}$, getting 
 a complex (exact at the last step on the right)
 \[0\rightarrow \Omega^1_X\otimes\omega_X\otimes L\rightarrow \cdots \rightarrow (\Omega^1_X\otimes\omega_X\otimes L^\on)^{\oplus n}\rightarrow\Omega^1_X\otimes\omega_X\otimes L^{\on +1}\rightarrow \Omega^1_X\otimes \omega_C\otimes N_C\rightarrow 0
 \]
 Since $C$ is not contained in the singular locus of $X$, the homology sheaves are supported on points. Therefore 
 the required vanishing (\ref{required}) follows from Serre vanishing via a diagram-chase. This concludes the proof of Claim \ref{additional} and of Corollary \ref{B}.
 \end{proof}

\section{Gaussian maps and the Fourier-Mukai transform}\label{section4}
In this section we will  describe the setup of the proof of Theorem \ref{subvarieties}. We will show that when the variety $X$ is a subvariety of an abelian variety $A$, the parameter variety $T$ is the dual abelian variety $\widehat A$, and the line bundle $\Q$ is the restriction to $X\times \widehat A$ of the Poincar\`e line bundle then  the (dual) gaussian map $g_{N_C^\on ,\Q_{|Y\times T}}$ of the Introduction can be naturally intepreted as a piece of a  (relative version of) the classical Fourier-Mukai transform associated to the Poincar\`e line bundle, applied to a certain space of morphisms.

\begin{notation/assumptions}\label{notationFM} We will keep all the notation and hypotheses of the Introduction. Explicitly: 

\noindent - let $X$ be a $n+1$-dimensional normal Cohen-Macaulay subvariety of a $d$-dimensional abelian variety $A$. As usual we choose an ample line bundle $L$ on $X$ such that we can find $n$ irreducible divisors $Y=Y_1,\dots ,Y_n\in X$ such that their intersection is an irreducible curve $C$. We assume also that $C$ is not contained in the singular locus of $Y$.  The line bundle $L_{|C}$ is denoted $N_C$.\\
- Let $\cP$ be a  Poincar\'e line bundle on $A\times \widehat A$. We denote
$$\Q=\cP_{|X\times \widehat A}\qquad\hbox{and}\qquad \cR=\cP_{|Y\times \widehat A}$$ 
- $\nu$ and $\pi$ are the projections of $Y\times \widehat A$.\\
- We assume that the line bundle $\nu^*L^\on\otimes\Q$ is relatively base point-free, namely the evaluation map $\pi^*\pi_*(\nu^*L^\on\otimes \Q)\rightarrow \nu^*L^\on\otimes\Q$ is surjective (here $\nu$ and $\pi$ denote also the projection of $X\times \widehat A\rightarrow \widehat A$).\\
- $p$, $q$ and $\widetilde\Delta$ are the projections and the diagonal of $(Y\times \widehat A)\times_{\widehat A}(Y\times \widehat A)$.\\
- The gaussian map  of the Introduction (see (\ref{specialized2}))  is
\begin{equation}\label{newgaussian}g=g_{N_C^\on,\cR}: \Ext^n_Y(\Omega^1_Y\otimes N_C^\on,\OO_Y)
\rightarrow \Ext^n_{Y\times \widehat A}(p_*(q^*(\I_{\widetilde\Delta}\otimes\cR\otimes\nu^*N_C^\on)),\cR)
\end{equation}
obtained as (the restriction to the relevant K\"unneth direct summand) of \break $\Ext^n_{Y\times\widehat A}(\>\cdot\>,\cR)$ of the differentiation (i.e. restriction to the diagonal) map (see also Subsection \ref{preliminaries}). We recall also the identification of the source:
$$\Ext^n_Y(\Omega^1_Y\otimes N_C^\on,\OO_Y)\cong\Ext^1_C(\Omega^1_Y\otimes N_C,\OO_C)$$
(see (\ref{koszul}) and Subsection \ref{preliminaries}).\\
- The projections of $Y\times A$ will be denoted $p_1$ and $p_2$.
\end{notation/assumptions}

\begin{remark} Since the variety $X$ is assumed to be smooth in codimension 1, and our arguments will concern a sufficiently positive line bundle $L$, we could have assumed from the beginning that the curve $C$ is smooth and the divisor $Y$ is smooth along $C$.
 However we preferred to assume the smoothness of $C$ only where needed, namely at the end of the proof. See also Remarks \ref{sernesidue} and  \ref{sernesi} below.
\end{remark}

\noindent\textbf{Fourier-Mukai transform. }
 Now we consider the trivial abelian scheme  $Y\times A\rightarrow Y$ and its dual $Y\times \widehat A\rightarrow Y$. The Poincar\'e line bundle $\cP$ induces naturally a Poincar\'e line bundle $\TP$ on $(Y\times A)\times_Y (Y\times\widehat A)$ (namely the pullback of $\cP$ to $Y\times A\times \widehat A$) and we consider the functors
$$\R\Phi:\DD(Y\times A)\rightarrow \DD(Y\times  \widehat A)\qquad\hbox{and}\qquad\R\Psi:\DD(Y\times \widehat A)\rightarrow \DD(Y\times A)$$
defined respectively by ${\R \pi_{Y\times \widehat A}}_*(\pi_{Y\times A}^*(\>\cdot\>)\otimes \tilde P)$ and ${\R \pi_{Y\times A}}_*(\pi_{Y\times \widehat A}^*(\>\cdot\>)\otimes \tilde P)$. By Mukai's theorem (\cite{mu2} Thm 1.1) they are equivalences of categories, more precisely
\begin{equation}\label{inversion}\R\Psi\circ\R\Phi\cong (-1)^*[-q]\qquad\hbox{and}\qquad \R\Phi\circ\R\Psi\cong (-1)^*[-q]
\end{equation}
 In particular it follows that, given $\OO_{Y\times A}$-modules $\F$ and $\G$,  we have the functorial isomorphism
\begin{equation}\label{perseval}FM_i:\Ext^i_{Y\times A}(\F,\G)\buildrel\cong\over\longrightarrow \Ext^i_{Y\times\widehat A}(\R\Phi (\F),\R\Phi (\G))
\end{equation}
(note that the $\Ext$-spaces on the right are usually hyperexts).

\noindent\textbf{The gaussian map. } Now we focus on the target of the gaussian map (\ref{newgaussian}). Let $\Delta_Y\subset Y\times A$ be the graph of the embedding $Y\hookrightarrow A$. In other words, $\Delta_Y$ is the diagonal of $Y\times Y$, seen as subscheme of $Y\times A$. It follows from the definitions that
\begin{equation}\label{k(0)}
\R\Phi(\OO_{\Delta_Y})=\cP_{|Y\times\widehat A}=\cR
\end{equation}
Moreover, we have that
\begin{equation}\label{R(0)}p_*(q^*(\I_{\widetilde\Delta}\otimes\cR\otimes\nu^*N_C^\on))\cong R^0\Phi(\I_{\Delta_Y}\otimes p_2^*N_C^\on)
\end{equation}
This is because of the natural isomorphisms
\[(Y\times Y)\times_Y(Y\times \widehat A)\cong Y\times Y\times \widehat A\cong (Y\times \widehat A)\times_{\widehat A}(Y\times \widehat A)\]
 yielding the identifications $\tilde \cP_{|(Y\times Y)\times_Y (Y\times \widehat A)}\cong q^*(\cP_{|Y\times\widehat A})=q^*(\cR)$. 
Moreover, for any sheaf $\F$ supported on $Y\times C$ (as $\I_{\Delta_Y}\otimes p_2^*N_C^\on$ ), we have that $R^i\Phi(\F)=0$ for $i>1$. Therefore the fourth quadrant spectral sequence 
\[\Ext^p_{Y\times \widehat A}(R^q\Phi(\F),\cR)\Rightarrow\Ext^{p-q}(\R\Phi(\F),\cR)\]
is reduced to a long exact sequence
\begin{multline}\label{spectral1}\cdots \rightarrow \Ext^{i-1}_{Y\times \widehat A}(R^0\Phi(\F),\cR)\rightarrow \Ext^{i+1}_{Y\times \widehat A}(R^1\Phi(\F),\cR)\rightarrow\\ 
\rightarrow \Ext^{i}_{Y\times \widehat A}(\R\Phi(\F),\cR)\rightarrow\Ext^{i}_{Y\times\widehat A}(R^0\Phi(\F),\cR)\rightarrow\cdots
\end{multline}
Putting  all that together we get the following diagram, with right column exact in the middle
\begin{equation}\label{subvarietiesFM}\xymatrixcolsep{1.5pc}\xymatrixrowsep{1.5pc}\xymatrix{\Ext^n_Y(\Omega^1_Y\otimes N_C^\on,\OO_Y)\ar[d]^=\\
\Ext^n_{\Delta_Y}((\I_{\Delta_Y}\!\!\otimes p_2^*N_C^\on)_{|\Delta_Y},\OO_\DDelta)\ar[d]^u&
\Ext^{n+1}_{Y\times\widehat A} (
 R^1\Phi_{\TP}(\I_{\Delta_Y}\!\!\otimes p_2^*N_C^\on),\!\cR)\ar[d]^\alpha\\
 \mathrm{Ext}^n_{Y\times A}(\I_{\Delta_Y}\!\!\otimes p_2^*N_C^\on,\OO_{\Delta_Y})\ar[r]_-\cong^-{FM_n}&
\mathrm{Ext}^n_{Y\times \widehat A}(\R\Phi_{\TP}(\I_{\Delta_Y}\!\!\otimes p_2^*N_C^\on),\!\cR)\ar[d]^\beta\\
  &\Ext^n_{Y\times\widehat A} (
 R^0\Phi_{\TP}(\I_\DDelta\!\!\otimes p_2^*N_C^\on),\!\cR)}
 \end{equation}
 where $u$ is the natural map (see also (\ref{u}) below). In conclusion, the kernel of the gaussian map can be described as follows
 \begin{lemma}\label{firstreduction} The gaussian map $g=g_{N_C^\on,\cR}$ of (\ref{newgaussian}) is the composition $\beta\circ FM_n\circ u$. Therefore 
 \[\ker (g)\cong Im(FM_n\circ u)\cap Im (\alpha)\]
 \end{lemma}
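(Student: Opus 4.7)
The plan is to show that diagram (\ref{subvarietiesFM}) commutes---concretely, that $g = \beta \circ FM_n \circ u$---and then to read off the kernel description from the exact middle column.

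The main step is the verification $g = \beta \circ FM_n \circ u$. The key observation is that the differentiation map $\F^Y \to \cR \otimes \nu^*(\Omega^1_Y \otimes N_C^\on)$, whose $\Ext^n(-,\cR)$ is $g$ by definition, coincides with $R^0\Phi$ of the natural restriction map $\I_{\Delta_Y} \otimes p_2^* N_C^\on \to (\I_{\Delta_Y} \otimes p_2^* N_C^\on)_{|\Delta_Y}$ on $Y \times A$. This uses (\ref{R(0)}) to identify $\F^Y$ with $R^0\Phi(\I_{\Delta_Y} \otimes p_2^* N_C^\on)$, and an analogous base-change computation, together with (\ref{k(0)}), to identify $\cR \otimes \nu^*(\Omega^1_Y \otimes N_C^\on)$ with $R^0\Phi$ of the restricted sheaf---the latter identification being precisely the leftmost equality of (\ref{subvarietiesFM}), supplied by the Koszul isomorphism (\ref{koszul}) combined with a local unfolding of $(\I_{\Delta_Y} \otimes p_2^* N_C^\on)_{|\Delta_Y}$ along $\Delta_C$ via the normal bundle sequence of $Y$ in $A$. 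Granted this, Mukai's functorial isomorphism (\ref{perseval}) sends $\Ext^n(-,\cR) = \Ext^n(-, \R\Phi(\OO_{\Delta_Y}))$ of the differentiation map to $\Ext^n_{Y \times A}(-, \OO_{\Delta_Y})$ of the restriction map, which by construction factors as $u$ followed by the truncation $\beta$ provided by the exact sequence (\ref{spectral1}). That sequence itself comes from the collapse of the fourth-quadrant spectral sequence, valid because $R^i\Phi$ vanishes for $i>1$ on sheaves supported on $Y \times C$.

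Once commutativity is in hand, the kernel description is a diagram chase: the equality $g = \beta \circ FM_n \circ u$ gives $\ker g = (FM_n \circ u)^{-1}(\ker\beta)$, while exactness of the middle column of (\ref{subvarietiesFM}) gives $\ker\beta = \mathrm{Im}\,\alpha$. Since $FM_n$ is an isomorphism (Mukai's equivalence), the restriction of $FM_n \circ u$ to $\ker g$ induces a bijection onto $\mathrm{Im}(FM_n \circ u) \cap \mathrm{Im}\,\alpha$, which is the claim.

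The hardest part will be the Fourier-Mukai compatibility in the central step: tracking naturality of $\R\Phi$ through the defining short exact sequence of $\F^Y$ so as to match the differentiation map with $R^0\Phi$ of the restriction map, while simultaneously respecting the subtle identification between $(\I_{\Delta_Y} \otimes p_2^* N_C^\on)_{|\Delta_Y}$ and $\Omega^1_Y \otimes N_C^\on$ along $C$ coming from the normal sequence of $Y \subset A$.
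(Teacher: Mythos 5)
Your proposal is correct and follows essentially the same route as the paper: the paper's entire proof is the one-line observation that, via the identification (\ref{R(0)}), the differentiation map defining $g$ and the restriction-to-diagonal map on $Y\times A$ "are defined in the same way," so that $g=\beta\circ FM_n\circ u$ by functoriality of $\R\Phi$ and the spectral sequence (\ref{spectral1}), with the kernel description then read off from the exact middle column exactly as in your diagram chase. You simply unpack the naturality argument that the paper leaves implicit.
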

 \begin{proof} The identification of the two maps follows using (\ref{R(0)}), simply because they are defined in the same way.\end{proof}

\section{Cohomological computations on $Y\times A$}\label{section5}

In this section we describe the source of the Fourier-Mukai map $FM_n$ of diagram (\ref{subvarietiesFM}) above, together with other related cohomology groups. We will use   the Grothendieck duality (or change of rings) spectral sequence
 \begin{equation}\label{spec2}
\mathrm{Ext}^i_{Y\times C}(\F,\EExt^j_{Y\times A}(\OO_{Y\times C},\OO_\DDelta ))\Rightarrow 
\mathrm{Ext}^{i+j}_{Y\times A}(\F, \OO_\DDelta)
\end{equation}
With this in mind, we compute the sheaves $\EExt^i_{Y\times A}(\OO_{Y\times C},\OO_\DDelta)$ in Proposition \ref{key} below.

\subsection{Preliminaries} The following standard identifications will be useful
\begin{equation}\label{prep1} \bigoplus_i \EExt^i_{Y\times A} (\OO_{\DDelta},\OO_{\DDelta})\cong \bigoplus_i{\delta}_*(\Lambda^i T_{A,0}\otimes\OO_{Y}) 
\end{equation}
(as graded algebras), where $T_{A,0}$ is the tangent space of $A$ at $0$ and $\delta$ denotes the diagonal  embedding 
$$\delta: Y\hookrightarrow Y\times A\>\>.$$
 This holds because $\DDelta$ is the preimage of $0$ via the difference map $Y\times A\rightarrow A$, $(y,x)\mapsto y-x$ (which is flat), and $\EExt^\bullet_A(k(0),k(0))$ is $\Lambda^\bullet T_{A,0}\otimes k(0)$.

 Moreover, letting   $\Delta_C\subset Y\times C\subset Y\times A$  the diagonal of $C\times C$ (seen as a subscheme of $Y\times A$) we have
\begin{equation}\label{prep2}\bigoplus_i\EExt^i_{Y\times A}(\OO_{\Delta_C},\OO_{\DDelta})\cong\bigoplus_i {\delta}_*(\Lambda^{i-n+1}T_{A,0}\otimes N_{C}^{\otimes n-1})
\end{equation}
(as graded modules on the  above algebra).
This is seen as follows: since $C$ is the complete intersection of $n-1$ divisors  of $Y$, all of them in $|L_{|Y}|$, $\EExt^j_{\DDelta}(\OO_{\Delta_C},\OO_{\DDelta})=0$ if $j\ne n-1$ and $\EExt^{n-1}_{\DDelta}(\OO_{\Delta_C},\OO_{\DDelta})={\delta}_*N_C^{\on-1}$.
Therefore (\ref{prep2}) follows from (\ref{prep1}) and the spectral sequence
$$
 \EExt^h_{\DDelta}(\OO_{\Delta_C},\EExt^j_{Y\times A}(\OO_{\DDelta},\OO_{\DDelta}))\Rightarrow \EExt^{h+j}_{Y\times A}(\OO_{\Delta_C},\OO_{\DDelta}).$$
 
\subsection{The (equisingular) restricted normal sheaf} We consider the $\OO_C$-module $\N^\prime$ defined by the sequence
  \begin{equation}\label{normal}
  0\rightarrow (\T_Y)_{|C}\rightarrow (\T_A)_{|C}\rightarrow \N^\prime\rightarrow 0
  \end{equation}
  When $Y=C$ the sheaf $\N^\prime$ is usually called the \emph{equisingular normal sheaf} (\cite{sernesi} Prop. 1.1.9).  Therefore we will refer to $\N^\prime$ as the restricted equisingular normal sheaf. 
  
  \begin{remark}\label{sernesidue} Note that, since $X$ is non-singular in codimension one then the curve $C$ can be taken to be smooth and the divisor $Y$ smooth along $C$ so that $\N^\prime$ is locally free and it is the restriction to $C$ of the normal sheaf of $Y$. Eventually we will make this assumption in the last section. However the computations of the present section work in the more general setting.
  \end{remark}
  
  The sheaves $\EExt^j_{Y\times A}(\OO_{Y\times C},\OO_\DDelta )$ appearing in (\ref{spec2}) are described as follows
  
  \begin{proposition}\label{key}\emph{(a)}
  $\bigoplus_i\EExt^i_{Y\times A}(\OO_{Y\times C},\OO_{\DDelta})\!\cong\! \bigoplus_i{\delta}_*(\overset{i-n+1}\Lambda \N^\prime\otimes N_C^{\on-1})$
  (as graded modules on the algebra (\ref{prep1})).
 In particular the left hand side  is zero for $i<n-1$.\\
 \emph{(b)} \ \ \ \ $\EExt^{d-1}_{Y\times A}(\OO_{Y\times C},\OO_\DDelta)\cong \delta_*\omega_C\>\>.$
\end{proposition}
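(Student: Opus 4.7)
The strategy is to factor $Y\times C \subset Y\times A$ through $Y\times Y$ and exploit the sheaf-theoretic change-of-rings spectral sequence
\[
E_2^{p,q}=\EExt^p_{Y\times Y}\bigl(\OO_{Y\times C},\EExt^q_{Y\times A}(\OO_{Y\times Y},\OO_\DDelta)\bigr)\Rightarrow\EExt^{p+q}_{Y\times A}(\OO_{Y\times C},\OO_\DDelta),
\]
which is available because $\OO_{Y\times C}$ is an $\OO_{Y\times Y}$-module (this is the analogue of (\ref{spec2}) used in proving (\ref{prep2})). I would show this spectral sequence is concentrated in the single row $p=n-1$, with column entries matching part (a).

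To compute the coefficient sheaves, note that $\iota:Y\times Y\hookrightarrow Y\times A$ is a regular closed embedding of codimension $d-n$ with normal bundle $N_\iota=p_2^*\N_{Y/A}$, where $\N_{Y/A}$ is the normal bundle of $Y\subset A$. A local Koszul resolution of $\OO_{Y\times Y}$ by the defining sections of $N_\iota^\vee$, after applying $\underline{\Hom}(-,\OO_\DDelta)$, has all differentials zero (because those sections vanish on $\DDelta\subset Y\times Y$), yielding $\EExt^q_{Y\times A}(\OO_{Y\times Y},\OO_\DDelta)\cong\delta_*\Lambda^q\N_{Y/A}$ via $\delta^*N_\iota\cong\N_{Y/A}$. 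To identify the $E_2$-terms $\EExt^p_{Y\times Y}(\OO_{Y\times C},\delta_*\Lambda^q\N_{Y/A})$, apply the adjunction identity $\R\underline{\Hom}_{Y\times Y}(F,\delta_*\mathcal H)\cong\delta_*\R\underline{\Hom}_Y(L\delta^*F,\mathcal H)$, where $\delta$ here denotes the ordinary diagonal $Y\hookrightarrow Y\times Y$ (compatibly with the paper's $\delta:Y\hookrightarrow Y\times A$ under $Y\times Y\subset Y\times A$). The key geometric input is that inside the smooth variety $Y\times Y$ the subvariety $Y\times C$ (codimension $n-1$) and the diagonal $\delta(Y)$ (codimension $n$) meet transversally in $\Delta_C\cong C$ of the expected codimension $2n-1$; hence $L\delta^*\OO_{Y\times C}=\OO_C$ in degree zero, and $E_2^{p,q}\cong\delta_*\EExt^p_Y(\OO_C,\Lambda^q\N_{Y/A})$. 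For the local complete intersection $C\subset Y$ of codimension $n-1$ and $\mathcal F$ locally free, the fundamental local duality $\EExt^p_Y(\OO_C,\mathcal F)=\mathcal F|_C\otimes N_C^{\otimes(n-1)}$ for $p=n-1$ (and $0$ otherwise) then shows the spectral sequence degenerates at $E_2$; setting $i=n-1+q$ yields part (a).

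For part (b), specialize $i=d-1$ in (a), obtaining $\delta_*(\det\N^\prime\otimes N_C^{\otimes(n-1)})$. Since $\omega_A$ is trivial one has $\det\N_{Y/A}=\omega_Y$, whence $\det\N^\prime=\omega_Y|_C$; combined with the adjunction $\omega_C=\omega_Y|_C\otimes N_C^{\otimes(n-1)}$ for the complete intersection $C\subset Y$, this identifies $\EExt^{d-1}$ with $\delta_*\omega_C$. The graded-module structure over (\ref{prep1}) follows from naturality: the Yoneda action of $T_{A,0}\subset\EExt^1_{Y\times A}(\OO_\DDelta,\OO_\DDelta)$ descends through the spectral sequence to exterior multiplication on $\Lambda^\bullet\N^\prime$ via the natural map $T_{A,0}\otimes\OO_Y\to\N_{Y/A}$ coming from the tangent sequence of $Y\subset A$. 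The main point requiring care is tracking this module structure through the successive identifications, but it is a formal consequence of the functoriality of the spectral sequence and the adjunctions.
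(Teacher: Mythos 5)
Your route is genuinely different from the paper's: you factor $Y\times C\subset Y\times Y\subset Y\times A$ and compute everything in one pass via a change-of-rings spectral sequence with Koszul-dual coefficients, whereas the paper applies $\mathcal{H}om_{Y\times A}(\,\cdot\,,\OO_{\DDelta})$ to $0\to\I_{\Delta_C/Y\times C}\to\OO_{Y\times C}\to\OO_{\Delta_C}\to 0$ and bootstraps degree by degree, identifying the degree-$n$ piece with the sequence (\ref{normal}) and then inducting using the $\Lambda^\bullet T_{A,0}$-module structure. When $Y$ is smooth (or at least a local complete intersection in $A$) along $C$, your computation is correct and arguably cleaner, and your derivation of (b) from (a) by taking $\det\N'\otimes N_C^{\on-1}=\omega_Y|_C\otimes N_C^{\on-1}=\omega_C$ is exactly what the paper does in that case.

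However, there is a genuine gap with respect to the generality in which the proposition is stated and proved in the paper. First, your coefficient computation $\EExt^q_{Y\times A}(\OO_{Y\times Y},\OO_\DDelta)\cong\delta_*\Lambda^q\N_{Y/A}$ rests on $Y\times Y\hookrightarrow Y\times A$ being a regular closed embedding with locally free conormal sheaf; $Y$ is only a divisor in a normal Cohen--Macaulay subvariety $X\subset A$, so $Y\subset A$ need not be lci and $\N_{Y/A}$ need not be locally free (and $Y\times Y$ is not smooth). Second, even where a Koszul computation is available, your answer is $\Lambda^{i-n+1}(\N_{Y/A})_{|C}\otimes N_C^{\on-1}$, whereas the proposition asserts $\Lambda^{i-n+1}\N^\prime\otimes N_C^{\on-1}$ with $\N^\prime$ the \emph{equisingular} restricted normal sheaf, i.e.\ the cokernel of $(\T_Y)_{|C}\to(\T_A)_{|C}$ from (\ref{normal}); these agree only where $Y$ is smooth, and Remark \ref{sernesidue} makes clear that the computations of this section are meant to hold without that assumption. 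Third, your proof of (b) uses $\det\N^\prime$, hence local freeness of $\N^\prime$; in the general case the paper needs the separate argument via $\R\mathcal{H}om_{Y\times A}(\OO_{Y\times C},\OO_{Y\times A})\underline{\otimes}^{\mathbf L}\OO_\DDelta$ and the Cohen--Macaulayness of $C$, which produces $\delta_*\omega_C$ without any smoothness hypothesis. Since the paper only imposes smoothness of $C$ and of $Y$ along $C$ in the final section, your argument proves the proposition in the case ultimately used, but not as stated; to repair it you would either have to add the lci/smoothness hypothesis explicitly or replace the Koszul step by an argument valid for the equisingular normal sheaf.
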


\proof 
(a) We apply $\mathcal{H}om_{Y\times A}(\>\cdot\>,\OO_{\Delta_Y})$
 to the basic exact sequence
\begin{equation}\label{trivial1}
 0\rightarrow \I_{\Delta_C/Y\times C}\rightarrow \OO_{Y\times C}\rightarrow \OO_{\Delta_C}\rightarrow 0
\end{equation}
where $ \I_{\Delta_C/Y\times C}$ denotes the ideal of $\Delta_C$ in $Y\times C$.
Since $\Delta_C$ is the intersection (in $Y\times A$)  of $\DDelta$ and $Y\times C$, the resulting long exact sequence is chopped in short exact sequences (where we plug the isomorphism  (\ref{prep2}))
\[\label{sequences}
0\rightarrow\EExt^{i-1}_{Y\times A}(\I_{\Delta_C/Y\times C},\OO_{\DDelta})\rightarrow \delta_*(\overset{i-n+1}\Lambda T_{A,0}\otimes N_C^{\on-1})
\rightarrow \EExt^{i}_{Y\times A}(\OO_{Y\times C},\OO_{\DDelta})\rightarrow 0 
\]
This proves that  
\begin{equation}\label{cases}\EExt^{i}_{Y\times A}(\OO_{Y\times C},\OO_{\Delta_Y})=
\begin{cases} 0&\hbox{if $i<n-1$}\\
{\delta}_*N^{\on-1}_{C}&\hbox{if $i=n-1$}
\end{cases}
\end{equation}
For $i=n$ it follows from (\ref{cases}) and  the spectral sequence (\ref{spec2}) applied to $\I_{\Delta_C/Y\times C}$ that  
\[
\EExt^{n-1}_{Y\times A}(\I_{\Delta_C/Y\times C},\OO_{\Delta_Y})
\cong{\mathcal H}om_{Y\times C}({\I_{\Delta_C/Y\times C},\EExt^{n-1}_{Y\times A}(\OO_{Y\times C},\DDelta}))\cong {\delta}_*({\T}_Y\otimes N^{\on-1}_{C})
\]
 and that  $\delta_*$ identifies (\ref{sequences})   to (\ref{normal}), tensored with  $N^{n-1}_{|C}$, i.e.
\begin{equation}\label{normal2}
0\rightarrow {\T}_Y\otimes N_C^{\on-1}\rightarrow T_{A,0}\otimes N^{\on-1}_{C}\rightarrow \mathcal{N}^\prime\otimes N_C^{\on-1}\rightarrow 0
\end{equation}
This proves the statement for $i=n$. For $i>n$,   Proposition \ref{key} follows by induction. Indeed $\EExt^\bullet_{Y\times A}(\OO_{Y\times C},\OO_{\DDelta})
$
is naturally  a graded-module over the exterior algebra 
$\EExt^\bullet_{Y\times A}(\OO_{\Delta_Y},\OO_{\DDelta})\cong {\delta}_*\bigl(\Lambda^\bullet T_{A,0}\otimes\OO_Y\bigr) $ (see (\ref{prep1})). Assume  that the statement of the present Proposition holds for the positive integer  $i-1$. Because of the action of the exterior algebra,  sequences (\ref{sequences}) and (\ref{normal2}) yield that the kernel of the map
\[
\delta_*(\overset{i-n+1}\Lambda T_{A,0}\otimes N^{\on-1}_{C})\rightarrow
 \EExt^{i}_{Y\times A}(\OO_{Y\times C},\OO_{\Delta})\rightarrow 0
\]
is surjected (up to twisting with $N_C^{\on-1}$) by $\delta_*(\Lambda^{i-n}T_{A,0}\otimes ({\T}_Y)_{|C})$. This presentation yields that
$ \EExt^{i}_{Y\times A}(\OO_{Y\times C},\OO_{\Delta})$ is equal to $\delta_*\bigl((\overset{i-n+1}\Lambda {\mathcal N}^\prime)\otimes N_C^{\on-1}\bigr)$. This proves (a).

\noindent (b) If $Y$ is smooth along $C$ then $\N^\prime$ is locally free (coinciding with the restricted normal bundle) (see Remark \ref{sernesidue}). In this case (b) follows at once from (a). In the general case the proof is as follows. We claim that for each $i$ the left hand side of (a) can be alternatively described as 
$$\EExt^i_{Y\times A}(\OO_{Y\times C},\OO_{\DDelta})\cong {\mathcal T}or^{Y\times A}_{d-1-i}(p_2^*\omega_C,\OO_{\DDelta})$$
This is proved by means of the isomorphism of functors
$$\R\Hom_{Y\times A}(\OO_{Y\times C},\OO_\DDelta)\cong \R\Hom_{Y\times A}(\OO_{Y\times C},\OO_{Y\times A})\underline\otimes_{Y\times A}^{\bf L}\OO_\DDelta$$
and the corresponding spectral sequences.
In fact, since $C$ is Cohen-Macaulay, we have that $\EExt^i_{Y\times A}(\OO_{Y\times C},\OO_{Y\times A})=0$ for $i\ne d -1$
and equal to $p_2^*\omega_C$ for $i=d-1$. Thus the spectral sequence computing the right hand side degenerates, proving the claim. 
In particular, for $i=d-1$, we have 
 $\EExt^{d-1}_{Y\times A}(\OO_{Y\times C},\OO_\DDelta)\cong (p_2^*\omega_C)
 \otimes\OO_\DDelta\cong{\delta}_*\omega_C\>.$\endproof

\subsection{Reduction of the statement of Theorem \ref{subvarieties}} As a first application of Proposition \ref{key}, we reduce the statement of Theorem \ref{subvarieties} -- in the equivalent formulation provided by Lemma \ref{firstreduction} -- to a simpler one. This will involve the issue of  comparing two spaces of first-order deformations mentioned in the Introduction (Subsection \ref{subsection3}), and it will be the content of Proposition \ref{secondreduction} and Corollary \ref{corollario} below. 

\begin{notation}\label{notazione mappe} We consider the first spectral sequence (\ref{spec2}) applied to $\F=p_2^*N^\on_{C}$, rather than to $\I_{\Delta}\otimes p_2^*N^\on_{C}$. Plugging the identification provided by Lemma \ref{key} we get
\[H^j(C, \Lambda^{i-n+1} \N^\prime\otimes N_C^{-1})\Rightarrow
 \Ext^{j+i}_{Y\times A}(p_2^*N^\on_{C},\OO_{\DDelta})\]
Since the $H^i$'s on the left are zero for $i\ne 0,1$, the spectral sequence is reduced to short exact sequences
\begin{equation}\label{IIspecsecond}0 \rightarrow H^1(C, \Lambda^{i-n} \N^\prime\otimes N_C^{-1})\xrightarrow{v_i}  \Ext^{i}_{Y\times A}(p_2^*N^\on_{C},\OO_{\DDelta})\xrightarrow{w_i}
 H^0(C, \Lambda^{i-n+1} \N^\prime\otimes N_C^{-1})\rightarrow 0\end{equation}
In particular, for $i=n$, we have  the exact sequence
\[0\rightarrow H^1(C,  N^{-1}_{C})\buildrel {v_n}\over\rightarrow  \Ext^{n}_{Y\times A}(p_2^*N^\on_{C},\OO_{\DDelta})\buildrel{w_n}\over\rightarrow H^0(C,  \N^\prime\otimes N_C^{-1})\rightarrow 0\]
Combining with the exact sequence coming from the spectral sequence (\ref{spectral1}), applied to $\F=p_2^*N_C^{\otimes n-1}$ we get       
\begin{equation}\label{diagramn}
\xymatrixrowsep{1.5pc}\xymatrixcolsep{2pc}\xymatrix{
H^1(C,  N^{-1}_{C})\ar@{>->}[d]^{v_n} &
 \Ext_{Y\times \widehat A}^{n+1}(R^1\Phi(p_2^*N^\on_{C}),\cR)\ar[d]^{a_n}\\
\Ext^{n}_{Y\times A}(p_2^*N^\on_{C},\OO_{\Delta_Y})\ar[r]_{\cong}^-{FM_n}\ar@{->>}[d]^{w_n}&\Ext^{n}_{Y\times \widehat A} (\R\Phi(p_2^*N^\on_{C}),\cR)\ar[d]^{b_n}\\
H^0(C,\N^\prime\otimes N_C^{-1})&\Ext_{Y\times \widehat A}^{n}(R^0\Phi(p_2^*N^\on_{C}),\cR)\\
}
\end{equation}
\end{notation}
\begin{remark}\label{mappaG} Note that, as shown by the exact sequence (\ref{normal}) defining the restricted equisingular normal sheaf,  we get that
\[H^0(C,\N^\prime \otimes N_C^\vee)=\ker (H^1(C,\T_Y\otimes N_C^\vee)\buildrel G\over\rightarrow H^1(C,\T_A\otimes N_C^\vee))\]
This map $G$ is the (restriction to $H^1(C,\T_Y\otimes N_C^\vee)$) of the map $G^Y_A$ of (\ref{general}) in the Introduction (see also Remark \ref{e} below).
\end{remark}

\begin{proposition}\label{secondreduction} 
In diagram (\ref{diagramn}), if the map \ $a_n$ is non-zero then the map $w_n\circ FM_n^{-1}\circ a_n$ is non-zero and its image is contained in the kernel of the gaussian map (\ref{newgaussian}). 
\end{proposition}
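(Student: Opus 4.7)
The plan is to compare the two diagrams (\ref{subvarietiesFM}) and (\ref{diagramn}) through the short exact sequence of $\OO_{Y\times A}$-modules
\[
0\to \F_1\to \F_2\to \delta_*N_C^{\on}\to 0,
\]
with $\F_1=\I_{\DDelta}\otimes p_2^*N_C^{\on}$ and $\F_2=p_2^*N_C^{\on}$. Exactness rests on a local Tor-vanishing check using that $\F_2$ is supported on $Y\times C$ and that $\DDelta\cap(Y\times C)=\Delta_C$, together with smoothness of $Y$ along $C$ in codimension one. Denote by $\varphi:\F_1\hookrightarrow\F_2$ the inclusion. For $\xi\in \Ext^{n+1}_{Y\times\widehat A}(R^1\Phi(\F_2),\cR)$ set $\theta=FM_n^{-1}(a_n(\xi))\in \Ext^n_{Y\times A}(\F_2,\OO_{\DDelta})$, and let $e=\iota(w_n(\theta))\in \Ext^n_Y(\Omega^1_Y\otimes N_C^{\on},\OO_Y)$, where $\iota$ is the inclusion $H^0(C,\N^\prime\otimes N_C^{-1})\hookrightarrow \Ext^n_Y(\Omega^1_Y\otimes N_C^{\on},\OO_Y)$ of Remark \ref{mappaG}. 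The objective is to show $e\in \ker g$, and $e\neq 0$ whenever $a_n(\xi)\neq 0$.

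For the containment in $\ker g$, the idea is to use functoriality of the Fourier-Mukai transform and of the spectral sequence (\ref{spectral1}). Applied to the triangle $\R\Phi(\F_1)\to\R\Phi(\F_2)\to\R\Phi(\delta_*N_C^{\on})$, this produces a commutative square
\[
\xymatrix{
\Ext^n_{Y\times\widehat A}(\R\Phi(\F_2),\cR)\ar[r]^-{b_n}\ar[d]_-{\R\Phi(\varphi)^*}&\Ext^n_{Y\times\widehat A}(R^0\Phi(\F_2),\cR)\ar[d]\\
\Ext^n_{Y\times\widehat A}(\R\Phi(\F_1),\cR)\ar[r]^-{\beta}&\Ext^n_{Y\times\widehat A}(R^0\Phi(\F_1),\cR).
}
\]
Since $a_n(\xi)\in \ker b_n$ by exactness of the right column of (\ref{diagramn}), the square forces $FM_n(\varphi^*(\theta))\in\ker\beta=\Im\alpha$. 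Granted the key identity $\varphi^*(\theta)=u(e)$ (see below), we obtain $g(e)=\beta(FM_n(u(e)))=\beta(FM_n(\varphi^*(\theta)))=0$, as desired.

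The identity $\varphi^*(\theta)=u(e)$ is what I expect to be the main obstacle. Its verification requires matching the top-degree edge map of the $\EExt$-to-$\Ext$ spectral sequence (\ref{IIspecsecond}) for $\F_2$ with the description of $u$ coming from (\ref{subvarietiesFM}): by Proposition \ref{key}(a) the relevant sheaf is $\EExt^n_{Y\times A}(\F_2,\OO_{\DDelta})\cong \delta_*(\N^\prime\otimes N_C^{-1})$, and one must trace how the morphism of $\EExt$-sheaves induced by $\varphi$ lifts, via the source identification $\Ext^n_{\DDelta}(\F_1|_{\DDelta},\OO_{\DDelta})\cong \Ext^n_Y(\Omega^1_Y\otimes N_C^{\on},\OO_Y)$, to $\iota$ — precisely the map built from the defining sequence (\ref{normal}) of $\N^\prime$ and the Koszul isomorphism (\ref{koszul}). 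Once this compatibility is established, the non-triviality $e\neq 0$ for $a_n(\xi)\neq 0$ follows by combining the injectivity of $u\circ\iota$, a Serre-vanishing argument for $L$ sufficiently positive in the style of Section \ref{section3}, with the observation that $\R\Phi(\varphi)^*$ is injective on $\Im a_n$: indeed, $\ker \R\Phi(\varphi)^*$ is the image of $\Ext^n_{Y\times\widehat A}(\R\Phi(\delta_*N_C^{\on}),\cR)$ in the long exact sequence of the triangle above, and the analogue of (\ref{spectral1}) for $\delta_*N_C^{\on}$ shows that a class in this image projects non-trivially under $b_n$ to $\Ext^n_{Y\times\widehat A}(R^0\Phi(\F_2),\cR)$, whereas any element of $\Im a_n=\ker b_n$ has vanishing $b_n$-component, ruling out a nonzero intersection.
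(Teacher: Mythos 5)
Your proposal follows essentially the same route as the paper's proof: the same short exact sequence $0\to\I\otimes p_2^*N_C^\on\to p_2^*N_C^\on\to\delta_*N_C^\on\to 0$, transported through the Fourier--Mukai transform and compared, on the two sides, with the spectral sequences (\ref{spec2}) and (\ref{spectral1}). The ``key identity'' $\varphi^*(\theta)=u(e)$ that you flag as the main obstacle is precisely the commutativity of the paper's diagram (\ref{fundamental}) (established there from Proposition \ref{key} and the compatibility of the two spectral sequences), and your argument that $\R\Phi(\varphi)^*$ is injective on $\mathrm{Im}(a_n)$ is the diagram chase the paper performs in (\ref{otherside}), culminating in (\ref{fivelemma}).
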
 

Combining with Theorem \ref{gaussian}, and noting that the assumptions in Notation/Assumptions \ref{notationFM} are certainly satisfied by a sufficiently positive line bundle $L$ on the variety $X$ we get

\begin{corollary}\label{corollario} If the map $a_n$ is non-zero   then $R^{n-1}\pi_*\Q=0$.
\end{corollary}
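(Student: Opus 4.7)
The plan is to chain together the three key results already established: Proposition \ref{secondreduction}, Corollary \ref{B}, and Theorem \ref{gaussian}. The entire argument is a bookkeeping step that delivers, via the Fourier-Mukai machinery of Section \ref{section4}, the very vanishing prescribed by the gaussian criterion of Section \ref{section2}.

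First, observe that the setting of Notation/Assumptions \ref{notationFM} permits the ample line bundle $L$ on $X$ to be chosen sufficiently positive in the sense required by Theorem \ref{gaussian} and Corollary \ref{B}; indeed Theorem \ref{subvarieties} (of which the present corollary is an intermediate step) proceeds exactly by replacing $L$ by a high multiple. With this positivity in hand, Corollary \ref{B} applies to the gaussian map $g = g_{N_C^\on,\cR}$ of (\ref{newgaussian}) and tells us that its kernel is at most one-dimensional, spanned (if non-zero) by the restricted normal class $e$. In particular, the mere non-injectivity of $g$ is enough to trigger the conclusion of Theorem \ref{gaussian}: the appropriate higher direct image of $\Q$, namely $R^{n-1}\pi_*\Q$, vanishes.

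Second, to verify the non-injectivity of $g$, I invoke Proposition \ref{secondreduction} under the standing hypothesis $a_n \neq 0$. That proposition supplies the composite $w_n \circ FM_n^{-1} \circ a_n$, which is both non-zero and has image contained in $\ker(g)$. Thus $\ker(g) \neq 0$; by Corollary \ref{B} the image must in fact be a non-zero multiple of $e$, so $e \in \ker(g)$, and the gaussian criterion of Theorem \ref{gaussian} yields the desired vanishing $R^{n-1}\pi_*\Q = 0$.

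No further substantive obstacle appears at this stage: the serious work is upstream, in the Fourier-Mukai identification of $\ker(g)$ carried out in Proposition \ref{secondreduction} and in the sharpening that produced the ``non-injectivity'' formulation of Corollary \ref{B}. The present corollary is simply the clean statement in which the Fourier-Mukai input $a_n \neq 0$ is converted into the vanishing statement that Section \ref{section6} will feed into the proof of Theorem \ref{subvarieties}.
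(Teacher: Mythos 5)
Your proof is correct and follows exactly the paper's route: Proposition \ref{secondreduction} shows that $a_n\neq 0$ forces the composite $w_n\circ FM_n^{-1}\circ a_n$ to be non-zero with image in $\ker(g)$, so $g$ is non-injective, and Corollary \ref{B} (the non-injectivity form of Theorem \ref{gaussian}) then gives the vanishing, the positivity hypotheses being arrangeable since $L$ may be taken sufficiently positive. One caveat: Theorem \ref{gaussian} and Corollary \ref{B} conclude $R^{n}\pi_*\Q=0$, not $R^{n-1}\pi_*\Q=0$, and the index in the stated corollary is evidently a typo (it is invoked in Section \ref{section6} precisely to conclude $R^n\pi_*\Q=0$), so your assertion that the gaussian criterion delivers the vanishing of the $(n-1)$-st direct image is a mis-citation you should correct rather than reproduce.
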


\begin{proof} (of Proposition \ref{secondreduction})
We apply $\Ext^n_{Y\times A}(\>\cdot\> ,\OO_{\Delta_Y})$ to the usual exact sequence
\begin{equation}\label{usual} 0\rightarrow\I_{\Delta_C/Y\times C}\otimes p_2^*N^\on_{C}\rightarrow  p_2^*N^\on_{C}\rightarrow {\delta}_*N^\on_{C}\rightarrow 0
\end{equation}
 Using the spectral sequence (\ref{spec2}) and the isomorphisms provided by Prop. \ref{key} we get the commutative exact diagram
 \begin{equation}\label{fundamental}\xymatrixcolsep{1pc}\xymatrixrowsep{1.5pc}
\xymatrix{H^1(N^{-1}_{C})\otimes\Lambda^0T_{A,0}\ar[r]^=\ar[d]^-= &H^1(N^{-1}_{C})\otimes\Lambda^0T_{A,0}\ar[d]\\ 
 H^1(N^{-1}_{C})\otimes\Lambda^0T_{A,0}\ar@{^{(}->}[r]^-{v_n}&\Ext^n_{Y\times A}(p_2^*N^\on_{C},\OO_{\Delta_Y})\ar@{->>}[r]^-{w_n}\ar[d]^f&
 H^0(\N^\prime\otimes N_C^{-1})\ar@{^{(}->}[d]\\
 &\Ext^n_{Y\times A}(\I\otimes p_2^*N^\on_{C},\OO_{\Delta_Y})\ar[d]&
  H^1(T_Y\otimes N^{-1}_{C})\ar@{^{(}->}[l]^-u\ar[d]^{G^Y_A}\\
  &H^1(N^{-1}_{C})\otimes T_{A,0}\ar[r]^-=&H^1(N^{-1}_{|C})\otimes T_{A,0}\\ }
  \end{equation}
where:\\
- we have used  (\ref{prep2}) to compute  
$$\Ext^i_{Y\times A}({\delta_C}_*N^\on_{C},\OO_{\Delta_Y})\cong H^1(C,N^{-1}_{C})\otimes \Lambda^{i-n+1}T_{A}$$
-  for typographical brevity we have denoted
$$\I:=\I_{\Delta_C/Y\times C}$$ and the map 
\begin{equation}\label{u} u: H^1(\T_Y\otimes N_C^{-1})\rightarrowtail\Ext^n_{Y\times A}(\I\otimes p_2^*N^\on_{C},\OO_{\Delta_Y})
\end{equation}
is the composition of the natural inclusion 
\begin{gather*}H^1(\T_Y\otimes N_C^{-1})=H^1({\mathcal H}om(\I\otimes p_2^*N_C\, , \, \OO_{\Delta_C}))\hookrightarrow 
 \Ext^1_{Y\times C}(\I\, , \, {\delta_C}_*N_C^{-1})\cong\\
\cong  \Ext^1_{Y\times C}(\I\otimes p_2^*N_C^\on ,\EExt^{n-1}_{Y\times A}(\OO_{Y\times C},\OO_\DDelta))
\end{gather*}
 where the last isomorphism follows from Lemma \ref{key}, and the natural  map, arising in  the spectral sequence (\ref{spec2}),
\[ \Ext^1_{Y\times C}(\I_{{\Delta_C}/Y\times C}\otimes p_2^*N_C^\on ,\EExt^{n-1}_{Y\times A}(\OO_{Y\times C},\OO_\DDelta))\rightarrow \\
\rightarrow \Ext^n_{Y\times A}(\I_{\Delta_C/Y\times C}\otimes p_2^*N^\on_{C},\OO_{\Delta_Y}) \>\>.
\]

Next, we look at the Fourier-Mukai image of the central column of (\ref{fundamental}). In order to do so, we first 
apply 
 the Fourier-Mukai transform $\R\Phi$ to sequence (\ref{usual}). Then we apply 
$ \R\mathrm{Hom}_{Y\times \widehat A}(\cdot\>, \> \cR)$ and the spectral sequence on the $Y\times\widehat  A$-side, namely (\ref{spectral1}). 

We claim that applying 
 the Fourier-Mukai transform $\R\Phi$ to sequence (\ref{usual}), we get
 the exact sequence
\begin{equation}\label{ennesima} 0\rightarrow R^0\Phi(\I\otimes p_2^*N^\on_{C})\rightarrow 
R^0\Phi(p_2^*N^\on_{C})\rightarrow \nu^*(N^\on_{C})\otimes\cR
\rightarrow 0
\end{equation}
and the isomorphism
\begin{equation}\label{isoR1}
R^1\Phi(\I\otimes p_2^*N^\on_{C})\buildrel\sim\over\rightarrow R^1\Phi(p_2^*N^\on_{C})
\end{equation}
 Indeed we have that $R^i\Phi( {\delta}_*(N^\on_{C}))=\nu^*(N^\on_{C})\otimes\cR$ for $i=0$ and zero otherwise. 
 The map $R^0\Phi(p_2^*N^\on_{C})\rightarrow \nu^*(N^\on_{C})\otimes\cR$ is nothing else but the relative evaluation map  $$\pi^*\pi_*(\nu^*(N_C^{\on})\otimes \cR)\rightarrow \nu^*(N_C^{\on})\otimes\cR$$ and its surjectivity follows from the assumptions (see Notation/Assumptions \ref{notationFM}).
This proves what claimed.

 Eventually we get 
the following exact diagram, whose central column is the Fourier-Mukai transform of the central column of (\ref{fundamental}) and whose right column is (part of) the long cohomology sequence of $ \R\mathrm{Hom}_{Y\times \widehat A}(\cdot\>, \> \cR)$
 applied to the exact sequence (\ref{ennesima})
\begin{equation}\label{otherside}\xymatrixcolsep{0.7pc}\xymatrixrowsep{1.5pc}\xymatrix{&H^1(N_{C}^{\vee})\!\otimes\! H^{0}(\OO_{\widehat A})\ar[r]^=\ar[d]&H^1(N_{|C}^{\vee})\!\otimes\! H^{0}(\OO_{\widehat A})\ar[d]\\
\Ext^{n+1}( R^1\!\Phi(p_2^*N^\on_{C}\!),\!\cR)\ar[r]^-{a_n}\ar[d]^\cong&\Ext^n( \R\Phi(p_2^*N^\on_{C}\!),\!\cR)\ar[r]^-{b_n}\ar[d]^{FM_n(f)}&\Ext^n( R^0\!\Phi(p_2^*N^\on_{C}\!),\!\cR)\ar[d]\\
\Ext^{n+1}( R^1\!\Phi(p_2^*N^\on_{C}\!),\!\cR)\ar[r]^-{\alpha}&\Ext^n(\R\Phi(\I\!\otimes \!p_2^*N^\on_{C}\!),\!\cR)\ar[r]^-{\beta}\ar[d]&\Ext^n(R^0\!\Phi(\I\!\otimes\! p_2^*N^\on_{C}\!),\!\cR)\ar[d]\\
&H^1(N_{C}^{\vee})\!\otimes\! H^{1}(\OO_{\widehat A})\ar[r]^=&H^1(N_{C}^{\vee})\!\otimes\! H^{1}(\OO_{\widehat A})\\}
\end{equation}
For brevity, at the place on the left of the third row we have plugged the isomorphism (\ref{isoR1}). It follows, in particular, that the map $FM_n(f)$ induces the isomorphism of the images of $a_n $ and $\alpha$:
\begin{equation}\label{fivelemma}
FM_n(f):\mathrm{im}(a_n)\buildrel\cong\over\longrightarrow \mathrm{im}(\alpha)
\end{equation}
An easy diagram-chase in (\ref{fundamental}) and  (\ref{otherside}) proves the first part of the Proposition, namely that if the map $a_n$ is non-zero then the map $w_n\circ FM_n^{-1}\circ a_n$ is non-zero. The second part follows at once from the first one, (\ref{fivelemma}) and Proposition \ref{firstreduction}.
\end{proof}

\section{Proof of Theorem \ref{subvarieties}}\label{section6}

The  strategy of proof of Theorem \ref{subvarieties} is to see the two vertical exact sequences of diagram (\ref{diagramn})
as the first homogeneous pieces of two exact sequences of  graded modules over the exterior algebra. Namely, for each $i\ge n$ we have
\begin{equation}\label{prep}
\xymatrixcolsep{1pc}\xymatrixrowsep{1.5pc}\xymatrix{
\bigoplus_i\Ext^1_C(N^\on_{C},\Lambda^{i-n}{\N}^\prime\otimes N_C^{\on-1})\ar@{>->}[d]^{v_i} &
\bigoplus_i \Ext_{Y\times \widehat A}^{i+1}(R^1\Phi(p_2^*N^\on_{C}),\cR)\ar[d]^{a_i}\\
\bigoplus_i\Ext^{i}_{Y\times A}(p_2^*N^\on_{C},\OO_{\Delta_Y})\ar[r]_{\cong}^{FM_i}\ar@{->>}[d]^{w_i}&
\bigoplus_i\Ext^{i}_{Y\times \widehat A} (\R\Phi(p_2^*N^\on_{C}),\cR)\ar[d]^{b_i}\\
\bigoplus_i\mathrm{Hom}_C(N^\on_{C},\Lambda^{i-n+1}{\N}^\prime\otimes N_C^{\on-1})&\bigoplus_i\Ext_{Y\times \widehat A}^{i}(R^0\Phi(p_2^*N^\on_{C}),\cR)\\
}
\end{equation}
The exterior algebra acts on the left-hand side as $\Lambda^\bullet T_{A,0}\hookrightarrow \Ext^\bullet_{Y\times A}(\OO_\DDelta,\OO_\DDelta)$ (see (\ref{prep1}) and (\ref{prep2})). After the Fourier-Mukai transform, it acts on the right hand side as $\Lambda^\bullet H^1(\OO_{\widehat A})\hookrightarrow \Ext^\bullet_{Y\times \widehat A}(\cR,\cR)$.

\subsection{Computations in degree $\mathbf{d-1}$} In this subsection we will make some explicit calculations in degree $d-1$, where we have the special feature that the $\Hom$ space at the bottom of the left column is naturally isomorphic to $\Hom(N_C^\on,\omega_C)$ (Prof. \ref{key}(b)). The following Proposition shows that what we want to prove in degree $n$, namely that the map $w_n\circ FM^{-1}_n\circ a_n$ is non-zero, is true, in strong form, in degree $d-1$.
\begin{proposition}\label{summand}
The map $w_{d-1}$ has a canonical (up to scalar) section $\sigma$ and the injective map $\tau=(FM_{d-1})_{|Im(\sigma)}$ factorizes trough $a_{d-1}$. \ Summarizing, in degree $i=d-1$ diagram (\ref{prep}) specializes to
\begin{equation}\label{prepxy}
\xymatrixcolsep{1pc}\xymatrix{
\Ext^1_C(N^\on_{C},\Lambda^{d-1-n}{\N}^\prime\otimes N_C^{\on-1})\ar@{>->}[d]^{v_{d-1}} &
 \Ext_{Y\times \widehat A}^{d}(R^1\Phi(p_2^*N^\on_{C}),\cR)\ar[d]^{a_{d-1}}\\
\Ext^{d-1}_{Y\times A}(p_2^*N^\on_{C},\OO_{\Delta_Y})\ar@{<->}[r]_{\cong}^{FM_{d-1}}\ar@{->>}[d]^{w_{d-1}}
\ar@/^1pc/[ur]^\tau&
\Ext^{d-1}_{Y\times \widehat A} (\R\Phi(p_2^*N^\on_{C}),\cR)\ar[d]^{b_{d-1}}\\
\mathrm{Hom}_C(N^\on_{C},\omega_C)\ar@/ ^3pc/[u]_{\sigma}&\Ext_{Y\times \widehat A}^{d}(R^0\Phi(p_2^*N^\on_{C}),\cR)\\
}
\end{equation}
\end{proposition}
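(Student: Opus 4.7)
The plan is to (i) construct the canonical section $\sigma$ using the identification $\det\mathcal{N}^\prime\cong\omega_Y|_C$ coming from \eqref{normal} and the triviality of $\omega_A$, which makes Proposition \ref{key}(b) canonical; and (ii) verify the factorization through $a_{d-1}$ via Fourier-Mukai, reducing to a vanishing statement in $\mathrm{Ext}^{d-1}_{Y\times\widehat A}(R^0\Phi(p_2^*N_C^{\on}),\cR)$.

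For (i): the canonical isomorphism $\omega_Y|_C=\det\mathcal{N}^\prime$ is exactly what makes Proposition \ref{key}(a) and (b) compatible, and it distinguishes (up to a scalar coming from the choice of a generator of $\Lambda^d T_{A,0}$) a canonical class $\epsilon\in\mathrm{Ext}^{d-1}_{Y\times A}(\delta_*\omega_C,\OO_{\Delta_Y})$; concretely, $\epsilon$ is the Yoneda class representing the natural inclusion of the top cohomology sheaf $\delta_*\omega_C=\mathcal{E}xt^{d-1}_{Y\times A}(\OO_{Y\times C},\OO_{\Delta_Y})$ into $\R\mathcal{H}om_{Y\times A}(\OO_{Y\times C},\OO_{\Delta_Y})[d-1]$. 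Given $\phi\in\mathrm{Hom}_C(N_C^{\on},\omega_C)$, define
\[
\sigma(\phi):=\epsilon\circ\delta_*(\phi)\circ r,
\]
where $r:p_2^*N_C^{\on}\twoheadrightarrow\delta_*N_C^{\on}$ is the restriction to the diagonal $\Delta_C$. In total degree $d-1$ the spectral sequence \eqref{spec2} applied to $p_2^*N_C^{\on}$ has only two nonzero $E_2$-terms (with $d_2=0$ since $C$ is a curve), namely $E_2^{0,d-1}=\mathrm{Hom}_C(N_C^{\on},\omega_C)$ and $E_2^{1,d-2}=H^1(C,\Lambda^{d-1-n}\mathcal{N}^\prime\otimes N_C^{-1})$, and a direct chase through the corresponding edge map shows $w_{d-1}\circ\sigma=\mathrm{id}$.

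For (ii): apply $\R\Phi$ to the composition defining $\sigma(\phi)$. By \eqref{k(0)} one has $\R\Phi(\OO_{\Delta_Y})=\cR$, and a fiberwise base-change computation (at $y\in C$, $\delta_*\omega_C|_{\{y\}\times A}$ is a skyscraper on $A$ whose Fourier-Mukai is the Poincar\'e line bundle $\mathcal{P}_y$, in degree $0$) yields $\R\Phi(\delta_*\omega_C)=(\iota_Y)_*(p_C^*\omega_C\otimes\cR|_{C\times\widehat A})$, a sheaf on $Y\times\widehat A$ concentrated in degree $0$. Hence $FM_{d-1}(\sigma(\phi)):\R\Phi(p_2^*N_C^{\on})\to\cR[d-1]$ factors through this sheaf, and the factorization $FM_{d-1}(\sigma(\phi))\in\mathrm{Im}(a_{d-1})$ is equivalent to the vanishing of the composition
\[
R^0\Phi(p_2^*N_C^{\on})\xrightarrow{R^0\Phi(\delta_*\phi\circ r)}R^0\Phi(\delta_*\omega_C)\xrightarrow{\R\Phi(\epsilon)}\cR[d-1]
\]
in $\mathrm{Ext}^{d-1}_{Y\times\widehat A}(R^0\Phi(p_2^*N_C^{\on}),\cR)$.

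The hardest step is this last vanishing. Grothendieck-Verdier duality for $\iota_Y:C\times\widehat A\hookrightarrow Y\times\widehat A$ (codimension $n-1$, relative dualizing $p_C^*N_C^{\otimes n-1}$) together with K\"unneth on $C\times\widehat A$ identifies the receiving $\mathrm{Ext}^{d-1}$ of $R^0\Phi(\delta_*\omega_C)$ as $\bigoplus_{i+j=d-n}H^i(C,\omega_Y^{-1}|_C)\otimes\Lambda^j T_{A,0}$, and determines the summand in which $\R\Phi(\epsilon)$ sits. Since $R^0\Phi(p_2^*N_C^{\on})=\pi_{\widehat A}^*R^0\Phi_A(j_*N_C^{\on})$ is pulled back from $\widehat A$, the target $\mathrm{Ext}^{d-1}_{Y\times\widehat A}(\pi_{\widehat A}^*R^0\Phi_A(j_*N_C^{\on}),\cR)$ can be rewritten via the Leray/adjunction formula as $\mathrm{Ext}^{d-1}_{\widehat A}(R^0\Phi_A(j_*N_C^{\on}),R\pi_{\widehat A*}\cR)$, whose K\"unneth/Leray decomposition can be controlled by Serre vanishing on $Y$ for $L$ sufficiently positive. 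Tracking the image of $\R\Phi(\epsilon)\circ R^0\Phi(\delta_*\phi\circ r)$ in this decomposition and pinning it to a summand that vanishes for positivity reasons is the heart of the argument.
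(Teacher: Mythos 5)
There is a genuine gap in the second half of your argument, located exactly where your route diverges from the paper's. You factor $\sigma(\phi)$ through the sheaf $\delta_*\omega_C$, whose Fourier--Mukai transform is a sheaf in degree $0$, and you correctly observe that this reduces $FM_{d-1}(\sigma(\phi))\in\mathrm{Im}(a_{d-1})=\ker(b_{d-1})$ to the vanishing of a composite class in $\Ext^{d-1}_{Y\times\widehat A}(R^0\Phi(p_2^*N^\on_C),\cR)$. But you never prove that vanishing --- your final paragraph is a strategy sketch, not an argument --- and the strategy is doubtful on its face. By adjunction $\Ext^{d-1}_{Y\times\widehat A}(R^0\Phi(p_2^*N^\on_C),\cR)\cong\Ext^{d-1}_{\widehat A}(\pi_*(\cR\otimes\nu^*N^\on_C),\R\pi_*\cR)$ is governed by the higher direct images $R^j\pi_*\cR$, i.e.\ by the very generic-vanishing statement the paper is trying to prove, so it cannot be controlled by Serre vanishing in $L$ alone; and the intermediate group you propose to use, $\Ext^{d-1}_{Y\times\widehat A}(R^0\Phi(\delta_*\omega_C),\cR)\cong\bigoplus_{i+j=d-n}H^i(C,\omega_C^{-1}\otimes N_C^{\otimes n-1})\otimes H^j(\OO_{\widehat A})$, is nonzero for positive $L$, so ``pinning the class to a vanishing summand'' is itself an unproved claim. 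A secondary issue: the ``natural inclusion'' of $\delta_*\omega_C[-(d-1)]$ into $\R\mathcal{H}om_{Y\times A}(\OO_{Y\times C},\OO_{\Delta_Y})$ that defines your $\epsilon$ does not exist canonically, since the top cohomology sheaf of a complex receives a truncation map rather than admitting a splitting; it does exist with target $\OO_{Y\times A}$ (where $\R\mathcal{H}om(\OO_{Y\times C},\OO_{Y\times A})\cong p_2^*\omega_C[-(d-1)]$ because $C$ is Cohen--Macaulay).

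That last remark points to the fix, which is what the paper does: it defines $\sigma$ as the Yoneda product $\Ext^{d-1}_{Y\times A}(p_2^*N^\on_C,\OO_{Y\times A})\otimes\Hom_{Y\times A}(\OO_{Y\times A},\OO_{\Delta_Y})\to\Ext^{d-1}_{Y\times A}(p_2^*N^\on_C,\OO_{\Delta_Y})$, using $\Ext^{d-1}_{Y\times A}(p_2^*N^\on_C,\OO_{Y\times A})\cong\Hom_C(N^\on_C,\omega_C)$, so the section factors through $\OO_{Y\times A}$ rather than through $\delta_*\omega_C$. By Mukai inversion $\OO_{Y\times A}$ corresponds to $\OO_{Y\times\hat 0}$ placed in degree $d$, so $FM_{d-1}(\sigma(\phi))$ becomes a composition $\R\Phi(p_2^*N^\on_C)\to\OO_{Y\times\hat 0}[-1]\to\cR[d-1]$; since $\R\Phi(p_2^*N^\on_C)$ has cohomology only in degrees $0$ and $1$ and the middle object is a sheaf sitting in degree $1$, the first arrow factors through $R^1\Phi(p_2^*N^\on_C)[-1]$ for purely formal amplitude reasons, which is precisely membership in $\mathrm{Im}(a_{d-1})$. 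No auxiliary vanishing is needed. To salvage your route you would have to identify your class $\epsilon\circ\delta_*(\phi)\circ r$ with this product class, which amounts to redoing the paper's computation; as written, the key factorization remains unproved.
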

\begin{proof} The section $\sigma$ is given (up to scalar) by
 the product map
\begin{equation}\label{L} \Ext^{d-1}_{Y\times  A}(p_2^*N^\on_{C},\OO_{Y\times  A})\otimes \mathrm{Hom}_{Y\times A}(\OO_{Y\times A},\OO_{\Delta_Y})\buildrel \sigma\over\rightarrow  \Ext^{d-1}_{Y\times  A}(p_2^*N^\on_{C},\OO_{\Delta_Y})\end{equation}
In fact, note that, 
\[\Ext^{d-1}_{Y\times  A}(p_2^*N^\on_{C},\OO_{Y\times  A})
\cong p_1^*H^0(\OO_Y)\otimes p_2^*\Ext^{d-1}_A(N^\on_{C},\OO_A)
\cong  p_1^*H^0(\OO_Y)\otimes p_2^*\mathrm{Hom}_C(N^\on_{C},\omega_C)
\]
The fact that $s$ is a section of $w_{q-1}$ is clear, as the latter is the natural map 
\begin{gather*}\Ext^{d-1}_{Y\times  A}(p_2^*N^\on_{C},\OO_{\Delta_Y})\rightarrow 
H^0(\EExt^{d-1}_{Y\times  A}(p_2^*N^\on_{C}),\OO_{\Delta_Y})\cong \\
\cong 
H^0(\EExt^{d-1}_{Y\times  A}(p_2^*N^\on_{C},\OO_{Y\times A})\otimes \OO_{\Delta_Y})\cong 
\Ext^{d-1}_{Y\times  A}(p_2^*N^\on_{C},\OO_{Y\times  A})\otimes H^0(\OO_\DDelta)
\end{gather*}

Next, we prove the second part of the statement. On the $Y\times \widehat A$-side, we consider the following  product map
\begin{equation}\label{c}\xymatrixcolsep{0.7pc}\xymatrixrowsep{1.5pc}\xymatrix{
\mathrm{Hom}_{Y\!\times \!\widehat A}(R^1\Phi(p_2^*N^\on_{C}), \OO_{Y\!\times \hat 0}) \! \otimes\! 
\Ext^d_{\!Y\times \!\widehat A}( \OO_{Y\!\times\hat 0} , \!\cR)\ar[r]\ar[d]^\cong&\Ext_{Y\!\times\! \widehat A}^{d}(R^1\Phi(p_2^*N^\on_{C}),\!\cR)\ar[d]^{a_{d-1}}\\
\Ext^{-1}_{\!Y\times\!\widehat A}(\R\Phi (p_2^*N^\on_{C}), \OO_{Y\!\times \hat 0})  \!\otimes\!
\Ext^d_{Y\!\times\! \widehat A}( \OO_{Y\!\times\hat 0} , \!\cR)\ar[r]&\Ext_{Y\!\times\! \widehat A}^{d-1}(\R\Phi (p_2^*N^\on_{C}),\!\cR)}
\end{equation}
where the vertical isomorphism comes from the usual spectral sequence (\ref{spectral1}).
 By (\ref{inversion}) the inverse of the Fourier-Mukai transform is $(-1)_A^*\circ \R\Psi[q]$. By (\ref{k(0)}) we have that 
\begin{align*}(-1)_A^*\circ \R\Psi(\OO_{Y\times\hat 0})&=(-1)_A^*\circ R^0\Psi(\OO_{Y\times \hat 0})=  \OO_{Y\times A}\\
(-1)_A^*\circ \R\Psi(\cR)&= (-1)^*_A\circ R^d\Psi(\cR)[-d]=\OO_{\Delta_Y}
\end{align*}
Therefore, thanks to Mukai's inversion theorem (\ref{inversion}), the Fourier-Mukai transform identifies -- on the $Y\times A$-side -- the sources of both rows in diagram (\ref{c}) to
\[\Ext^{-1}_{Y\times A}(p_2^*N^\on_{C}[-d],\OO_{Y\times A})\otimes \Ext^d(\OO_{Y\times A}, \OO_{\Delta_Y}[-d])
\cong \Ext^{d-1}_{Y\times  A}(p_2^*N^\on_{C},\OO_{ Y\times A})\otimes \mathrm{Hom}_{Y\times A}(\OO_{Y\times A},\OO_{\Delta_Y})
\]
This concludes the proof of the Proposition.
\end{proof}

\subsection{Conclusion of the proof of Theorem \ref{subvarieties}} 
\begin{notation}\label{final}We introduce the following typographical abbreviations on diagram (\ref{prep}): the isomorphic (via the Fourier-Mukai transform) spaces of the central row of diagram (\ref{prep}) are identified to vector spaces $E_i$, and we denote $V_i,E_i, W_i$ the spaces appearing in the left column of diagram (\ref{prep}) (from top to down), and $A_i, E_i, B_i$ the spaces appearing in the right column (from top to down). We denote also $\Lambda^\bullet T_{A,0}$ the acting exterior algebra. The structure of $\Lambda^\bullet T_{A,0}$-graded modules induces a natural map of exact sequences (we focus on degrees $n$ and $d-1$ as they are the relevant ones in our argument)
\begin{equation}\label{diagrammone}
\begin{matrix}
\xymatrix{&A_n\ar[d]^{a_n}\\ V_n\ar@{>->}[r]^{v_n}&E_n\ar@{->>}[r]^{w_n} \ar[d]^{b_n}&W_n\\
&B_n\\}\\
&{}\\ \downarrow^\phi \\ 
\xymatrix{\\&\overset{d-1-n}\Lambda T^\vee_{A,0}\otimes A_{d-1}\ar[d]^{ \tilde a_{d-1}}\\\overset{d-1-n}{\Lambda}T_{A,0}^\vee\otimes V_{d-1}\ar@{>->}[r]^{\tilde v_{d-1}}&
\overset{d-1-n}\Lambda T_{A,0}^\vee\otimes E_{d-1}\ar@{->>}[r]^{\tilde w_{d-1}} \ar[d]^{\tilde b_{d-1}}&\overset{d-1-n}\Lambda T^\vee_{A,0}\otimes W_{d-1}\\
&\overset{d-1-n}\Lambda T_{A,0}^\vee\otimes B_{d-1}\\} 
\end{matrix}
\end{equation}
where we have denoted $\tilde v_{d-1}=\mathrm{id}\otimes v_{d-1}$ and so on. We denote also 
$$\phi_{A_n}:A_n\rightarrow \overset{d-1-n}\Lambda T_{A,0}^\vee\otimes A_{d-1}$$ and, similarly, $\phi_{B_n}$, \ $ \phi_{V_n}$, \ $\phi_{W_n}$, \  $\phi_{E_n}$ \ . 
\end{notation}

At this point we make the following assumption

\noindent (*) \emph{the extension class $e$ of the restricted cotangent sequence
\begin{equation}\label{normal,again}
0\rightarrow N_C^\vee\rightarrow (\Omega^1_X)_{|C}\rightarrow (\Omega^1_Y)_{|C}\rightarrow 0
\end{equation}
belongs to the subspace $H^1(\T_Y\otimes N_C^\vee)$ \ of \ $\Ext^1_C(\Omega^1_Y\otimes N_C,\OO_C)$ }\footnote{these are the locally trivial first-order deformations}.
Note that if $C$ is smooth and $Y$ is smooth along $C$ this is obvious, since the two spaces coincide.

\begin{remark}\label{e} Note that, if (*) holds then $e$ belongs to the subspace $H^0(\N^\prime \otimes N_C^\vee)$ of $H^1(\T_Y\otimes N_C^\vee)$: as mentioned in Remark \ref{mappaG}  from the exact sequence defining the restricted equisingular normal sheaf  (\ref{normal}) we get that
$$H^0(C,\N^\prime \otimes N_C^\vee)=\ker \bigl(\,\,H^1(C,\T_Y\otimes N_C^\vee)\buildrel G\over\rightarrow H^1(C,\T_A\otimes N_C^\vee)\,\,\bigr)$$
 The fact that $e$ belongs to $H^0(\N^\prime\otimes N_C^\vee)$  essentially follows from the deformation-theoretic interpretation of this map $G$ (it is the (restriction to $H^1(C,\T_Y\otimes N_C^\vee)$ of the map $G^Y_Z$ of (\ref{general}) in the Introduction, with $Z=A$). More formally: the target of $G$ is $\Hom_k(\Omega^1_{A,0}, H^1(C,N_C^\vee))$ and $G$ takes an extension class $f$ to the map $\Omega^1_{A,0}\rightarrow H^1(N_C^{-1})$ obtained by composing the coboundary map of $f$ with the map $\Omega^1_{A,0}\rightarrow H^0((\Omega^1_Y)_{|C})$. If the extension class is (\ref{normal,again}) then this map factorizes through $H^0((\Omega^1_X)_{|C})$, hence $e\in\ker G$.  
\end{remark}

From diagram (\ref{diagrammone}) we have the map
$$\phi_{W_n}: H^0(C,\N^\prime\otimes N_C^\vee)=\ker (G)\rightarrow \Hom(\overset{d-1-n}\Lambda T_{A,0}, H^0( \omega_C\otimes N_C^{\otimes - n}))$$ 
 
\begin{lemma}\label{non-zero}
$\phi_{W_n}(e)\ne 0\>\>.$
\end{lemma}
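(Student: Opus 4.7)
The plan is to make $\phi_{W_n}$ completely explicit, identify $e$ with a canonical nowhere-vanishing section of $\N^\prime\otimes N_C^{-1}$, and then exhibit a point of $C$ where the resulting wedge product is non-zero.

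First, unwinding the $\Lambda^\bullet T_{A,0}$-module structure on $W_\bullet=H^0(C,\Lambda^{\bullet-n+1}\N^\prime\otimes N_C^{-1})$ coming from Proposition \ref{key}, the map $\phi_{W_n}$ is the adjoint of the wedging map
\[
\Lambda^{d-1-n}T_{A,0}\otimes H^0(\N^\prime\otimes N_C^{-1})\longrightarrow H^0(\Lambda^{d-n}\N^\prime\otimes N_C^{-1})=H^0(\omega_C\otimes N_C^{-n}),
\]
sending $t_1\wedge\dots\wedge t_{d-1-n}\otimes s$ to $\bar{t}_1\wedge\dots\wedge\bar{t}_{d-1-n}\wedge s$, where $\bar{t}_i$ denotes the image of $t_i$ under the surjection $T_{A,0}\otimes\OO_C\twoheadrightarrow\N^\prime$ from sequence (\ref{normal}), and the equality on the right is Proposition \ref{key}(b).

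Next, I would identify $e$ as a section. The commutative diagram relating the two short exact sequences $0\to\T_Y|_C\to\T_X|_C\to N_C\to 0$ and $0\to\T_Y|_C\to T_{A,0}\otimes\OO_C\to\N^\prime\to 0$ exhibits the former as the pullback of the latter along a natural sub-bundle inclusion $N_C\hookrightarrow\N^\prime$. Twisting by $N_C^{-1}$, this inclusion becomes a canonical section $s_0\in H^0(\N^\prime\otimes N_C^{-1})$, and its image under the connecting homomorphism $H^0(\N^\prime\otimes N_C^{-1})\to H^1(\T_Y|_C\otimes N_C^{-1})$ is the extension class of the top row, which, via the canonical identification (\ref{koszul}), coincides with $e$. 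For $L$ sufficiently positive one has $H^0(T_{A,0}\otimes N_C^{-1})=0$, so this connecting map is injective and therefore $e=s_0$.

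Finally, by Remark \ref{sernesidue} I may assume that $C$ is smooth and that $Y$ is smooth along $C$, so that $\N^\prime$ is locally free of rank $d-n$ and $N_C\subset\N^\prime$ is a sub-line-bundle. Hence $e=s_0$ is nowhere vanishing, and at every $p\in C$ the vector $e(p)$ generates the line $N_C|_p\otimes N_C^{-1}|_p\subset\N^\prime|_p\otimes N_C^{-1}|_p$. The composition $T_{A,0}\to\N^\prime|_p\twoheadrightarrow\N_{X/A}|_p$ is surjective onto a space of dimension $d-n-1$, so I can pick $t_1,\dots,t_{d-1-n}\in T_{A,0}$ whose images form a basis of $\N_{X/A}|_p$; the vectors $\bar{t}_1,\dots,\bar{t}_{d-1-n},e(p)$ then form a basis of $\N^\prime|_p$ (up to the twist by $N_C^{-1}|_p$), and so $(\bar{t}_1\wedge\dots\wedge\bar{t}_{d-1-n}\wedge e)(p)\ne 0$, proving $\phi_{W_n}(e)\ne 0$. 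The main obstacle is the second step: verifying carefully that $s_0$ is sent to $e$ requires tracking the identifications in Proposition \ref{key} together with the Koszul isomorphism (\ref{koszul}); once this is in hand, the concluding argument is pointwise linear algebra.
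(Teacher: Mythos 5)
Your proof is correct and is essentially the paper's own argument read in dual form: the paper identifies $\phi_{W_n}(e)$, via $\Lambda^{d-1-n}T_{A,0}\cong\Lambda^{n+1}\Omega^1_{A,0}$, with $\Lambda^{n+1}$ of the surjective codifferential $\Omega^1_{A,0}\otimes\OO_C\rightarrow(\Omega^1_X)_{|C}$ followed by $\Lambda^{n+1}(\Omega^1_X)_{|C}\rightarrow\omega_C\otimes N_C^{\otimes -n}$, which is exactly your pointwise wedge computation with the canonical section $s_0$ on the normal-bundle side. The one cosmetic slip is the citation of (\ref{koszul}) for the identification $e=s_0$: what is actually needed there is the duality between the restricted normal and conormal sequences under assumption (*) (as in Remark \ref{e}), not the Koszul isomorphism.
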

\begin{proof} We make the identification $\overset{d-1-n}\Lambda T_{A,0}\cong \overset{n+1}\Lambda \Omega^1_{A,0}$. Accordingly $\phi_{W_n}(e)$ is identified to a map
$$\phi_{W_n}(e):\overset{n+1}\Lambda\Omega^1_{A,0}\rightarrow H^0(\omega_C\otimes N_C^{\otimes - n})$$
We consider the map
\begin{equation}\label{lambda}\overset{n+1}\Lambda\Omega^1_{A,0}\rightarrow H^0((\overset{n+1}\Lambda\Omega^1_X)_{|C})
\end{equation}
obtained as $\Lambda^{n+1}$ of the co-differential $\Omega^1_{A,0}\otimes \OO_C\rightarrow (\Omega^1_X)_{|C}$. Since the co-differential is surjective the map (\ref{lambda}) is non-zero. If $C$ is smooth and $X$  and $Y$ are smooth along $C$ then the target of (\ref{lambda}) is $H^0((\omega_X)_{|C})=H^0(\omega_C\otimes N_C^{\otimes -n})$. Via the above identifications,  the map $\phi_{W_n}(e)$ coincides, up to scalar, with  (\ref{lambda}). The Lemma follows in this case.  
 Even if $X$ is not smooth along $C$ the $\phi_{W_n}(e)$ is the composition of  the map (\ref{lambda}) and the $H^0$ of the canonical map $\Lambda^{n+1}((\Omega^1_X)_{|C})\rightarrow (\omega_X)_{|C}\cong \omega_C\otimes N_C^{\otimes - n}$. Such composition is clearly non-zero and the Lemma follows as above. 
 \end{proof}
 
 At this point, the line of the argument is clear. The class $\phi_{W_n}(e)$ is non-zero, and, by Proposition \ref{summand} it belongs to ${\mathrm Im}\,(a_{d-1})\cap {\mathrm Im}\, (\phi_{E_n})$. This implies that ${\mathrm Im}\, (a_n)$ is non-zero, since otherwise $E_n$ would be isomorphic to a subspace of $B_n$ and ${\mathrm Im}\, (\phi_{E_n})$ would be contained in $\Lambda^{d-1-n}T^\vee\otimes B_{d-1}$.
 
  By Corollary \ref{corollario} this proves that $R^n\pi_*\Q=0$.

To prove the vanishing of $R^i\pi_*\Q$ for $0<i<n$ one takes a sufficiently positive ample line bundle $M$ on $X$ and a $i+1$-dimensional complete intersection of divisors in $|M|$, say $X^\prime$. It follows from relative Serre vanishing that $R^i\pi_*(\Q)=R^i\pi_*(\Q_{|X^\prime\times \widehat A})$. Therefore the desired vanishing follows by the previous step.  The vanishing of $R^0\pi_*\Q$ is standard: as it is a torsion-free sheaf, it is enough to show that its support is a proper subvariety of $\widehat A$. By base change, this is contained in the locus of $\alpha\in \widehat A$ such that $h^0(X,\alpha_{|X})>0$, i.e. the kernel of the homomorphism $\Pic0 A\rightarrow \mathrm{Pic} X$,  which is easily seen to be a proper subvariety of $\Pic0 A$ \footnote{for example, one can reduce to prove the same assertion for a general curve $C$ complete intersection of $n$ irreducible effective divisors
 in $|L|$ for a sufficiently positive line bundle $L$ on $X$}. This concludes the proof of Theorem \ref{subvarieties}. \qed

\begin{remark}\label{sernesi} The hypothesis that $X$ is smooth in codimension one is used to ensure that assumption \emph{(*)} can be made.
\end{remark}

\section*{Ackowledgements} 

Thanks to Christopher Hacon and Sandor Kovacs for valuable correspondence, and especially for showing me their counterexamples to a previous wrong statement of mine. Thanks also to Antonio Rapagnetta and Edoardo Sernesi for valuable discussions.
I also thank Hacon, Mircea Mustata and Mihnea Popa for allowing me to contribute to this volume, even if I couldn't participate to the Robfest.
Above all my gratitude goes to Rob Lazarsfeld. Most of my understanding of the matters of this paper goes back to his teaching.

\providecommand{\bysame}{\leavevmode\hbox
to3em{\hrulefill}\thinspace}


\begin{thebibliography}{13}


\bibitem{bf} C. Banica and O. Forster, {Multiplicity structures on space curves}, in \emph{The Lefschetz Centennial Conference, proceedings on Algebraic Geometry}, AMS (1984), 47--64

 \bibitem{bm} A. Beauville. and J. Y. M\'erindol, {Sections hyperplanes des surfaces K3}, Duke Math. Jour., \textbf{55} no. 4, (1987), 873--878.
 
 

\bibitem{cfp}
E. Colombo, P. Frediani and G. Pareschi, {Hyperplane sections of abelian surfaces}, J. Alg. Geom \textbf{21} (2012), 183--200





\bibitem{ferrand} D. Ferrand, {Courbes gauches et fibr\'es de rang 2}, C.R.A.S. \textbf{281} (1977) 345--347

\bibitem{gl1}
M. Green and R. Lazarsfeld, {Deformation theory, generic vanishing
theorems, and some conjectures of Enriques, Catanese and Beauville},
Invent. Math. \textbf{90} (1987), 389--407.

\bibitem{gl2}
M. Green and R. Lazarsfeld, {Higher obstructions to deforming cohomology groups of line bundles}, J. Amer. Math. Soc. \textbf{1} (1991), no.4, 87--103.




\bibitem{laz} Lazarsfeld, ~R., Brill-Noether-Petri without
degeneration, {\em J. of Differential Geom. (3)} {\bf 23} (1986),
299-307.

\bibitem{rd} R. Hartshorne, \emph{Residues and Duality}, Springer (1966)



\bibitem{hacon}
Ch. Hacon, {A derived category approach to generic vanishing}, J.
Reine Angew. Math. \textbf{575} (2004), 173--187.

\bibitem{hacko} Ch. Hacon and S. Kovacs, {Generic vanishing fails for singular varieties and in characteristic $p>0$},
preprint 	arXiv:1212.5105 [math.AG]





\bibitem{kempf}
G. Kempf: \emph{Toward the inversion of abelian integrals, I, } Ann. Math. 110 (1979), 184-202









\bibitem{eisenbud} S. Mac Lane, \emph{Homology}, Springer, 1963

\bibitem{M} D. Mumford, \emph{Abelian varieties}, 2nd ed., Oxford University press, London, 1974

\bibitem{mu1}
S. Mukai, {Duality between $D(X)$ and $D(\widehat{X})$ with its
application to Picard sheaves}, Nagoya Math. J. \textbf{81} (1981),
153--175.


\bibitem{mu2}
S. Mukai, {Fourier functor and its application to the moduli of bundles
on an abelian variety}, In: Algebraic Geometry, Sendai 1985, Advanced studies
in pure mathematics \textbf{10} (1987), 515--550.

\bibitem{pp1} G. Pareschi and M. Popa, 
{Strong generic vanishing and a higher dimensional Castelnuovo-de Franchis inequality}, Duke Mathematical Journal, \textbf{150} (2009), 269--285. 

\bibitem{pp2} G. Pareschi and M. Popa, {GV-sheaves, Fourier-Mukai transform, and Generic Vanishing}, Amer. J. of Math. \textbf{133} (2011), 235--271

\bibitem{pp3} G. Pareschi and M. Popa, {Regularity on Abelian Varieties III: Relationship with Generic Vanishing and Applications} in {\em Grassmannians, Moduli Spaces and Vector Bundles}, AMS (2011) 141--167

\bibitem{sernesi} E. Sernesi, \emph{Deformations of algebraic schemes}, Springer (2006)

\bibitem{v} Voisin,~C., Sur l'application de Wahl des courbes satisfaisant la condition de Brill-Noether-Petri, {\em Acta Math.} {\bf 168} (1992), 249--272.

\bibitem{wahl} Wahl,~J., Introduction to Gaussian maps on an algebraic curve,
  {\em Complex projective geometry},
London Math. Soc. Lecture Note Ser., 179, Cambridge Univ. Press,
Cambridge, (1992), 304--323.


\end{thebibliography}
\end{document}